\newcommand{\Hom}{\operatorname{Hom}\nolimits}
\renewcommand{\Im}{\operatorname{Im}\nolimits}
\newcommand{\Ker}{\operatorname{Ker}\nolimits}
\newcommand{\Tor}{\operatorname{Tor}\nolimits}
\newcommand{\Ext}{\operatorname{Ext}\nolimits}
\newcommand{\HH}{\operatorname{HH}\nolimits}
\newcommand{\Ho}{\operatorname{H}\nolimits}
\newcommand{\La}{\Lambda}
\newcommand{\op}{\operatorname{op}\nolimits}
\newcommand{\e}{\operatorname{e}\nolimits}
\newcommand{\Lae}{\Lambda^{\e}}
\newcommand{\Ae}{A^{\e}}
\newcommand{\Tatetor}{\operatorname{\widehat{Tor}}\nolimits}
\newcommand{\Tateext}{\operatorname{\widehat{Ext}}\nolimits}
\newcommand{\TateHH}{\operatorname{\widehat{HH}}\nolimits}
\newtheorem{theorem}{Theorem}[section]
\newtheorem{corollary}[theorem]{Corollary}
\newtheorem{lemma}[theorem]{Lemma}
\newtheorem{proposition}[theorem]{Proposition}
\newtheorem*{theoremempty}{Theorem}
\theoremstyle{definition}
\newtheorem*{definition}{Definition}
\theoremstyle{definition}
\theoremstyle{definition}
\theoremstyle{definition}
\theoremstyle{definition}
\theoremstyle{definition}
\theoremstyle{remark}
\newtheorem*{remark}{Remark}
\theoremstyle{definition}
\theoremstyle{definition}
\begin{document}
\title{Tate-Hochschild homology and cohomology of Frobenius algebras}
\author{Petter Andreas Bergh \& David A.\ Jorgensen}

\address{Petter Andreas Bergh \\ Institutt for matematiske fag \\
  NTNU \\ N-7491 Trondheim \\ Norway}
\email{bergh@math.ntnu.no}

\address{David A.\ Jorgensen \\ Department of mathematics \\ University
of Texas at Arlington \\ Arlington \\ TX 76019 \\ USA}
\email{djorgens@uta.edu}


\thanks{The second author was partly supported by NSA grant H98230-10-0197, and
performed partly during a visit by the second author to the first. The second
author thanks the Institutt for Matematiske Fag, NTNU, for their hospitality and
generous support.}

\keywords{Hochschild homology, Hochschild cohomology, Tate homology, Tate
cohomology, Frobenius algebras, complete resolutions}

\subjclass[2000]{Primary: 16E40, 16E30. Secondary: 16D50}

\begin{abstract}
Let $\Lambda$ be a two-sided Noetherian Gorenstein $k$-algebra, for $k$ a field.
We introduce Tate-Hochschild homology and cohomology groups for $\Lambda$, which are defined
for all degrees, non-negative as well as negative, and which agree with the usual Hochschild homology and cohomology groups for all degrees larger than the injective dimension of $\Lambda$. We prove certain
duality theorems relating the Tate-Hochschild (co)homology groups in positive degree to
those in negative degree, in the case where $\Lambda$ is a Frobenius algebra.  We explicitly compute all Tate-Hochschild (co)homology groups
for certain classes of Frobenius algebras, namely, certain quantum complete intersections.
\end{abstract}

\maketitle

\section{Introduction}\label{intro}
Hochschild cohomology was introduced by Hochschild in \cite{Hochschild1, Hochschild2} as a tool for
studying the structure of associative algebras.  A bit later, Tate introduced a cohomology theory based on complete resolutions, which consequently defined cohomology in all degrees, positive \emph{and} negative (cf. the end of \cite{Tate}). In this paper we combine these two notions of cohomology and extend Hochschild cohomology to the `negative side,'
arriving at what we call \emph{Tate-Hochschild cohomology}.
It turns out that the `positive side' of Tate-Hochschild cohomology agrees with the usual Hochschild cohomology. We show
that in some cases the `positive' and `negative' sides are symmetric. However, this is not the case in
general, and we illustrate this by computing explicitly both sides of Tate-Hochschild cohomology
for certain classes of algebras.

More specifically, let $k$ be a field and $\La$ denote a two-sided Noetherian
Gorenstein $k$-algebra.  Then $\La$ has a complete resolution $\mathbb T$ over the enveloping algebra
$\La^e$ of $\La$, and for a $\La$-$\La$-bimodule $B$ one can define the
Tate-Hochschild cohomology groups with coefficients in $B$ by
\[
\TateHH^n(\La,B)=\Ho^n(\Hom_{\La^e}(\mathbb T,B))
\]
for all $n\in\mathbb Z$. (See Section \ref{tatehochschild} for details.)

When $\La$ is a finite dimensional algebra and $B$ is finitely generated, then the Tate-Hochschild cohomology groups are finite dimensional vector spaces over $k$.  We prove in Section
\ref{tatehochschild}
general duality results which relate the vector space dimensions of the positive cohomology to
those of the negative cohomology with coefficients in a dual module.  We use these
results in Section \ref{frobeniusalg} to establish, for example, the following consequence when $\La$ is moreover a Frobenius algebra:

\begin{theoremempty} Let $\La$ be a Frobenius algebra, with Nakayama automorphism
$\nu$.  Then
\[
\dim_k \TateHH^n(\La,\La) = \dim_k\TateHH^{-(n+1)}(\La,{_{\nu^2}\La_1})
\]
for all $n\in\mathbb Z$, where ${_{\nu^2}\La_1}$ denotes the bimodule $\La$ twisted on the right by the automorphism $\nu^2$.
\end{theoremempty}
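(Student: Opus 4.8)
The plan is to combine the general duality results of Section~\ref{tatehochschild} — which relate the dimensions of positive Tate--Hochschild cohomology to those of negative Tate--Hochschild cohomology (equivalently, Tate--Hochschild homology) with coefficients in a dual module — with two Frobenius-specific computations. The input from Section~\ref{tatehochschild} that I will use takes the following concrete form: since $\La$ is finite dimensional, $\La\cong\Hom_k(\Hom_k(\La,k),k)$, so tensor--hom adjunction over $\Lae$ yields an isomorphism of complexes
\[
\Hom_{\Lae}(\mathbb{T},\La)\;\cong\;\Hom_k\!\bigl(\Hom_k(\La,k)\otimes_{\Lae}\mathbb{T},\,k\bigr),
\]
whence $\TateHH^n(\La,\La)\cong\Hom_k\!\bigl(\Ho_n(\Hom_k(\La,k)\otimes_{\Lae}\mathbb{T}),k\bigr)$. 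Since every vector space occurring is finite dimensional over $k$, it suffices to identify $\Ho_n(\Hom_k(\La,k)\otimes_{\Lae}\mathbb{T})$ — the $n$-th Tate--Hochschild homology group of $\La$ with coefficients in $\Hom_k(\La,k)$ — with the $k$-dual of $\TateHH^{-(n+1)}(\La,{}_{\nu^2}\La_1)$.

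For this step I would use that $\mathbb{T}$ may be taken to consist of finitely generated projective $\Lae$-modules, and that $\Lae$ is a Frobenius algebra — hence self-injective — because $\La$ is. Dualizing $\mathbb{T}$ componentwise, $\mathbb{T}^{\ast}:=\Hom_{\Lae}(\mathbb{T},\Lae)$ is again a totally acyclic complex of finitely generated projectives; dualizing the projective-resolution part of $\mathbb{T}$ and using $\Ext^{\geq 1}_{\Lae}(\La,\Lae)=0$ (valid since $\Lae$ is an injective module over itself) identifies $\mathbb{T}^{\ast}$, up to a shift by one in the indexing, with a complete resolution of the bimodule $\omega:=\Hom_{\Lae}(\La,\Lae)$. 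Double duality then produces, for every bimodule $C$, an isomorphism $C\otimes_{\Lae}\mathbb{T}\cong\Hom_{\Lae}(\mathbb{T}^{\ast},C)$ of complexes, so that $\Ho_n(C\otimes_{\Lae}\mathbb{T})\cong\Tateext^{-n-1}_{\Lae}(\omega,C)$; the shift $-n-1$ is forced by where $\omega$ sits in $\mathbb{T}^{\ast}$ relative to where $\La$ sits in $\mathbb{T}$. Here one must be a little careful, since $\omega$ and $\Hom_k(\La,k)$ appear naturally as right $\Lae$-modules, which I would turn back into bimodules using the anti-automorphism of $\Lae$ that interchanges the two tensor factors.

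It remains to compute the relevant twists. As $\La$ is Frobenius with Nakayama automorphism $\nu$, the enveloping algebra $\Lae=\La\otimes_k\La^{\op}$ is Frobenius with Nakayama automorphism $\nu\otimes\nu^{-1}$; combining this with the standard isomorphism $\Hom_k(\La,k)\cong{}_1\La_\nu$ for Frobenius algebras, a short computation gives $\omega\cong{}_{\nu}\La_1$. Writing $\omega\cong{}^{\sigma}\La$ with $\sigma$ the automorphism of $\Lae$ twisting the left $\La$-action by $\nu$ and using the transport isomorphism $\Tateext^{\ast}_{\Lae}({}^{\sigma}\La,C)\cong\Tateext^{\ast}_{\Lae}(\La,{}^{\sigma^{-1}}C)=\TateHH^{\ast}(\La,{}_{\nu^{-1}}C_1)$, we get with $C=\Hom_k(\La,k)\cong{}_1\La_\nu$ that
\[
\Ho_n\!\bigl(\Hom_k(\La,k)\otimes_{\Lae}\mathbb{T}\bigr)\;\cong\;\TateHH^{-(n+1)}\!\bigl(\La,{}_{\nu^{-1}}\La_{\nu}\bigr),
\]
and ${}_{\nu^{-1}}\La_{\nu}\cong{}_1\La_{\nu^2}$ is exactly $\La$ twisted on the right by $\nu^2$, i.e.\ the bimodule ${}_{\nu^2}\La_1$ of the statement. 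Substituting this into the displayed isomorphism of the first paragraph and taking $k$-dimensions gives the asserted equality for all $n\in\mathbb{Z}$.

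The step I expect to be the main obstacle is precisely this bookkeeping of shifts and sides: getting the shift to be exactly $-(n+1)$ (and not $-n$ or $-(n+2)$), and tracking which side each Nakayama twist acts on, so that the twist coming from $\omega=\Hom_{\Lae}(\La,\Lae)$ and the twist coming from $\Hom_k(\La,k)$ combine to give $\nu^{2}$ rather than $\nu^{\pm 1}$ or $\nu^{\pm 3}$. Both depend on fixing consistent conventions for bimodule twists and for the left/right module structures over $\Lae$.
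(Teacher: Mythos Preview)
Your approach is essentially the paper's: the paper proves a general duality $\Tateext_{\La}^n(M,L)\cong\Tateext_{\La}^{-(n+1)}(L,{}_{\nu}M)$ over any Frobenius algebra (Theorem~\ref{dualitygeneralfrobenius}) by exactly the mechanism you describe---pass to Tate--Tor via $k$-duality (Lemma~\ref{dualityexttor}), use that $\mathbb{T}^{*}$ is a shifted complete resolution of $M^{*}$ together with the natural isomorphism $P^{*}\otimes_{\La}N\cong\Hom_{\La}(P,N)$ (Lemma~\ref{dualityprojective}), and identify $M^{*}$ via the Frobenius structure (Lemma~\ref{ringdual})---and then specializes to $\Lae$ with its Nakayama automorphism $\nu\otimes\nu^{-1}$ (Corollary~\ref{frobeniusenveloping}). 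One caution on the bookkeeping you rightly flag: with the paper's convention for $\nu$ one has $D(\La)\cong{}_{1}\La_{\nu^{-1}}\cong{}_{\nu}\La_{1}$ (not ${}_{1}\La_{\nu}$), and the target bimodule ${}_{\nu^2}\La_{1}$ is \emph{left}-twisted by $\nu^{2}$ despite the phrase ``twisted on the right'' in the introductory statement, so your final identification ${}_{\nu^{-1}}\La_{\nu}\cong{}_{1}\La_{\nu^{2}}$ lands on the opposite side unless you adjust one of these conventions.
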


Thus Tate-Hochschild cohomology is symmetric when $\nu$ squares to the identity automorphism, and this
is the case, for example, when $\La$ is a symmetric algebra or an exterior algebra. On the other hand, for certain classes
of Frobenius algebras, Tate-Hochschild cohomology is not symmetric.  In Section \ref{quantumci} we compute
the Tate-Hochschild cohomology for the quantum complete intersection $A = k \langle X,Y \rangle / (X^a, XY-qYX, Y^b)$ with $a,b\ge 2$ and $q$ not a root of unity in $k$, finding that
\[
\dim \TateHH^n(A,A) = \left \{
\begin{array}{ll}
1 & \text{if } n=0 \\
2 & \text{if } n=1 \\
1 & \text{if } n=2 \\
0 & \text{if } n \neq 0,1,2. \\
\end{array}
\right.
\]

Throughout the paper we simultaneously treat the homology version as well, \emph{Tate-Hochschild homology}.
It turns out that the Tate-Hochschild homology behaves quite different than does the cohomology.
For example, in Section \ref{frobeniusalg} we give the companion to the theorem above, showing
that Tate-Hochschild homology is always symmetric when $\La$ is a Frobenius algebra. This result was first proved in \cite{EuSchedler}.

\begin{theoremempty} Let $\La$ be a Frobenius algebra.  Then
\[
\dim_k \TateHH_n(\La,\La) = \dim_k\TateHH_{-(n+1)}(\La,\La)
\]
for all $n\in\mathbb Z$.
\end{theoremempty}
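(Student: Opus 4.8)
The statement to prove is that for a Frobenius algebra $\La$ one has $\dim_k \TateHH_n(\La,\La) = \dim_k \TateHH_{-(n+1)}(\La,\La)$ for all $n \in \mathbb Z$. The natural strategy is to invoke the general duality machinery developed earlier in Section \ref{tatehochschild}, which relates positive-degree Tate-Hochschild (co)homology to negative-degree (co)homology with coefficients in a dual bimodule, and then to exploit the special feature of Frobenius algebras — namely that the $k$-linear dual $D\La = \Hom_k(\La,k)$ is isomorphic, as a $\La$-$\La$-bimodule, to a twist ${}_1\La_\nu$ of $\La$ by the Nakayama automorphism $\nu$.

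**Key steps.** First I would recall the general duality result: since $\TateHH_n(\La,B) = \Ho_n(\mathbb T \otimes_{\Lae} B)$ and $\TateHH^{-n}(\La, DB) = \Ho^{-n}(\Hom_{\Lae}(\mathbb T, DB))$, the standard adjunction $\Hom_{\Lae}(\mathbb T, DB) \cong D(\mathbb T \otimes_{\Lae} B)$ (valid because $D$ is exact on finite-dimensional vector spaces and $\mathbb T$ consists of finitely generated projectives) gives $\TateHH^{-n}(\La, DB) \cong D\,\TateHH_n(\La, B)$, hence $\dim_k \TateHH_n(\La, B) = \dim_k \TateHH^{-n}(\La, DB)$. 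Applying this with $B = \La$ yields $\dim_k \TateHH_n(\La,\La) = \dim_k \TateHH^{-n}(\La, D\La)$. Next, using the Frobenius property $D\La \cong {}_1\La_\nu$ as bimodules, I would rewrite this as $\dim_k \TateHH^{-n}(\La, {}_1\La_\nu)$. Then I would apply the cohomology duality theorem (the first displayed theorem in the excerpt, or the general version behind it) in the form $\dim_k \TateHH^m(\La, B) = \dim_k \TateHH^{-(m+1)}(\La, B')$ for the appropriate dualized/twisted coefficient bimodule $B'$; combining the twist ${}_1\La_\nu$ appearing here with whatever twist the cohomology duality introduces should, after tracking the Nakayama automorphism bookkeeping, produce exactly $\dim_k \TateHH_{-(n+1)}(\La, \La)$ — the extra twists cancelling because going from homology to cohomology and back to homology passes the coefficient module through $D(-)$ twice, and $DD\La \cong \La$.

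**Alternative, more symmetric route.** A cleaner variant avoids cohomology entirely: one can set up a homology-to-homology duality directly. Over a Frobenius algebra, $\Lae$ is again Frobenius (or at least self-injective with a known Nakayama automorphism $\nu \otimes \nu^{-1}$ or $\nu\otimes\nu$ depending on conventions), so a complete resolution $\mathbb T$ of $\La$ over $\Lae$ has the property that its $k$-dual $D\mathbb T$, suitably reindexed as $(D\mathbb T)_n = D(\mathbb T_{-n-1})$, is again a complete resolution — of $D\La$, hence of a twist of $\La$. Feeding this reindexed complex into the definition of Tate-Hochschild homology and chasing the degree shift $n \mapsto -(n+1)$ gives the result; the twist by $\nu$ on coefficients washes out because one is computing with coefficients in $\La$ itself and $\La \cong {}_\nu\La_\nu$ is not needed — only the dimension count survives, and $\dim_k \TateHH_n(\La, {}_\sigma\La_\tau)$ is independent of inner-equivalent twists.

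**Main obstacle.** The genuine difficulty is the bookkeeping of the Nakayama automorphism twists, making sure that the twist introduced by $D\La \cong {}_1\La_\nu$, the twist introduced by the enveloping-algebra duality, and the reindexing of the complete resolution all combine to give coefficients isomorphic to $\La$ (as opposed to ${}_{\nu}\La_1$ or ${}_{\nu^2}\La_1$, which would spoil the clean symmetry — indeed the cohomology theorem shows those twists genuinely do not cancel there). The point is that for homology the relevant twists cancel in pairs, whereas for cohomology a single uncancelled $\nu^2$ remains; isolating why this happens — essentially that homology is covariant and uses $\otimes$ twice while cohomology uses $\Hom$ and introduces an asymmetry — is the crux. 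Once the twist $\cong \La$ is confirmed, the degree shift and dimension equality are formal. I would also need to verify the finiteness hypotheses (so that $D$ behaves well) are met, which is immediate since $\La$ is finite-dimensional and $\La$ is finitely generated as a bimodule.
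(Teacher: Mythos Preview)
Your proposal is on the right track in spirit --- the result does follow from the general duality machinery of Section~\ref{tatehochschild} combined with the Frobenius identification $D\La \cong {_{\nu}\La}_1$, and you correctly identify the twist bookkeeping as the crux --- but there are two issues.

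First, there is an indexing error in your opening step. The $\Hom$-tensor adjunction $\Hom_{\Lae}(\mathbb T, DB) \cong D(B \otimes_{\Lae} \mathbb T)$ is a degree-preserving isomorphism of complexes; taking (co)homology gives $\TateHH^{n}(\La, DB) \cong D\bigl(\TateHH_n(\La, B)\bigr)$, not $\TateHH^{-n}$. Dualizing a complex over $k$ does not negate the degree. This is exactly the content of Lemma~\ref{dualityexttor}. Once you correct this, your route through cohomology can be made to work, but it becomes longer than advertised: you end up at $\dim\TateHH^n(\La, D\La)$, apply the cohomology duality to land in $\Tateext_{\Lae}^{-(n+1)}(D\La, {_{\nu^2}\La}_1)$, untwist to reach $\TateHH^{-(n+1)}(\La, D\La)$, and then invoke Lemma~\ref{dualityexttor} a second time to return to homology.

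Second, and more substantively, you defer the twist computation, saying only that the twists ``should'' cancel ``in pairs''. The cancellation is not formal; it is precisely the content of the proof, and it depends on the specific shape $\nu_{\Lae} = \nu \otimes \nu^{-1}$ of the Nakayama automorphism of $\Lae$ (Corollary~\ref{frobeniusenveloping}). The paper's argument stays entirely within Tate--Tor: apply the homology part of Theorem~\ref{dualitygeneralfrobenius} over $\Lae$ to get $\Tatetor^{\Lae}_n(B, \La) \cong \Tatetor^{\Lae}_{-(n+1)}\bigl(D(\La)_{\nu_{\Lae}}, D(B)\bigr)$, then compute $D(\La)_{\nu\otimes\nu^{-1}} \cong ({_{\nu}\La}_1)_{\nu\otimes\nu^{-1}} \cong \La_{(\nu\otimes 1)}$, and finally use the twist-moving Lemma~\ref{homologytwist} to transfer the remaining twist onto the other argument, obtaining $\Tatetor^{\Lae}_{-(n+1)}\bigl(\La, {_{\nu^{-1}}D(B)}_1\bigr)$. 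Setting $B = \La$ gives ${_{\nu^{-1}}D(\La)}_1 \cong {_{\nu^{-1}}}({_{\nu}\La}_1)_1 \cong \La$, and the result follows. This single pass through the Tor duality, with the twist made explicit, is both shorter and more transparent than your detour through cohomology.
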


Again, this theorem is a consequence of more general duality statements which we prove in Section \ref{tatehochschild}.

\section{Tate-Hochschild (co)homology}\label{tatehochschild}

Let $k$ be a commutative ring and $\La$ a $k$-algebra. We denote by $\La^{\op}$ the opposite algebra of $\La$, and by $\Lae$ the enveloping algebra $\La
\otimes_k \La^{\op}$ of $\La$. The $k$-dual $\Hom_k(-,k)$ is denoted by
$D(-)$, and the ring dual $\Hom_{\La}(-, \La )$ by $(-)^*$.

The classical Hochschild cohomology groups of an algebra were introduced by Hochschild in \cite{Hochschild1, Hochschild2}. For every non-negative integer $n$, let $Q_n$ denote the $n$-fold tensor product $\La \otimes_k \cdots \otimes_k \La$ of $\La$ over $k$, with $Q_0=k$. If $B$ is a $\La$-$\La$-bimodule, the corresponding \emph{Hochschild cohomology complex}
$$\cdots \to 0 \to 0 \to H^0 \xrightarrow{\partial^0} H^1 \xrightarrow{\partial^1} H^2 \xrightarrow{\partial^2} H^3 \to \cdots$$
is defined as follows:
$$H^n = \left \{
\begin{array}{ll}
0 & \text{for } n<0, \\
B & \text{for } n=0, \\
\Hom_k(Q_n,B) & \text{for } n>0,
\end{array} \right.$$
with differentiation given by
\begin{eqnarray*}
( \partial^0b)( \lambda ) & = & \lambda b-b \lambda \\
( \partial^nf)( \lambda_1 \otimes \cdots \otimes \lambda_{n+1} ) & = & \lambda_1 f( \lambda_2 \otimes \cdots \otimes \lambda_{n+1} ) \\
& + & \sum_{i=1}^n(-1)^if( \lambda_1 \otimes \cdots \otimes \lambda_i \lambda_{i+1} \otimes \cdots \otimes \lambda_{n+1} ) \\
& + & (-1)^{n+1} f( \lambda_1 \otimes \cdots \otimes \lambda_n ) \lambda_{n+1}.
\end{eqnarray*}
The cohomology of this complex is the \emph{Hochschild cohomology of $\La$, with coefficients in $B$}. We denote this by $\HH^* ( \La, B)$. The homological counterpart to Hochschild cohomology is defined using tensor product instead of the $\Hom$-functor. The \emph{Hochschild homology complex}
$$\cdots \to H_3 \xrightarrow{\partial_3} H_2 \xrightarrow{\partial_2} H_1 \xrightarrow{\partial_1} H_0 \to 0 \to 0 \to \cdots$$
is defined as follows:
$$H_n = \left \{
\begin{array}{ll}
0 & \text{for } n<0, \\
B & \text{for } n=0, \\
B \otimes_k Q_n & \text{for } n>0,
\end{array} \right.$$
with differentiation given by
\begin{eqnarray*}
\partial_n ( b \otimes \lambda_1 \otimes \cdots \otimes \lambda_n ) & = & b \lambda_1 \otimes \lambda_2 \otimes \cdots \otimes \lambda_n \\
& + & \sum_{i=1}^{n-1}(-1)^i b \otimes \lambda_1 \otimes \cdots \otimes \lambda_i \lambda_{i+1} \otimes \cdots \otimes \lambda_n \\
& = & (-1)^n \lambda_n b \otimes \lambda_1 \otimes \cdots \otimes \lambda_{n-1}.
\end{eqnarray*}
The homology of this complex is the \emph{Hochschild homology of $\La$, with coefficients in $B$}. We denote this by $\HH_* ( \La, B)$.

When the algebra $\La$ is projective as a module over the ground ring $k$, the Hochschild cohomology and homology groups can be interpreted using $\Ext$ and $\Tor$ over the enveloping algebra $\Lae$. Namely, for each non-negative integer $n$, let $P_n = Q_{n+2}$, that is, the $(n+2)$-fold tensor product of $\La$ over $k$. We endow $P_n$ with a left $\Lae$-module structure (that is, a bimodule structure) by defining
$$( \lambda \otimes \lambda' )( \lambda_0 \otimes \cdots \otimes \lambda_{n+1} ) = \lambda \lambda_0 \otimes \cdots \otimes \lambda_{n+1} \lambda',$$
and for each $n \ge 1$, define a bimodule homomorphism $P_n \xrightarrow{d_n} P_{n-1}$ by
$$\lambda_0 \otimes \cdots \otimes \lambda_{n+1} \mapsto \sum_{i=0}^n(-1)^i \lambda_0 \otimes \cdots \otimes \lambda_i \lambda_{i+1} \otimes \cdots \otimes \lambda_{n+1}.$$
The sequence
$$\mathbb{S} \colon \cdots \to P_3 \xrightarrow{d_3} P_2 \xrightarrow{d_2} P_1 \xrightarrow{d_1} P_0 \xrightarrow{\mu} \La \to 0$$
of bimodules and homomorphisms, where $\mu$ is the multiplication map, is exact (cf.\ \cite[p.\ 174-175]{CartanEilenberg}), and we denote by $\mathbb{S}_{\La}$ the complex obtained by deleting $\La$. Since $P_n$ and $\Lae \otimes_k Q_n$ are isomorphic as $\Lae$-modules, adjointness gives
$$\Hom_{\Lae}(P_n,B) \cong \Hom_k(Q_n, \Hom_{\Lae}( \Lae,B)) \cong \Hom_k(Q_n,B),$$
and the Hochschild cohomology complex is isomorphic to the complex $\Hom_{\Lae}( \mathbb{S}_{\La}, B)$ (where we view $B$ as a left $\Lae$-module). Similarly, the Hochschild homology complex is isomorphic to the complex $B \otimes_{\Lae} \mathbb{S}_{\La}$ (where we view $B$ as a right $\Lae$-module). Now, if $\La$ is projective as a module over $k$, then so is $Q_n$, hence the functor $\Hom_k(Q_n,-)$ is exact. By adjointness, this functor is isomorphic to the functor $\Hom_{\Lae}(P_n,-)$, and therefore $P_n$ is a projective bimodule. Thus the sequence $\mathbb{S}$ is a projective bimodule resolution of $\La$, giving isomorphisms
\begin{eqnarray*}
\HH^*( \La, B ) & \cong & \Ext_{\Lae}^*( \La, B) \\
\HH_*( \La, B ) & \cong & \Tor^{\Lae}_*( B, \La ).
\end{eqnarray*}

The Hochschild cohomology of an algebra lives only in positive degrees, as does the Hochschild homology. The focus of this paper is a (co)homological theory which extends the classical one. In order to give the definition, we recall some general notions from \cite{AvramovMartsinkovsky}. Suppose $\La$ is a two-sided Noetherian Gorenstein ring, say of Gorenstein dimension $d$. That is to say, the injective dimensions of $\La$, both as a left and as a right module over itself, are equal to $d$. Then every finitely generated left $\La$-module $M$ admits a \emph{complete resolution}
$$\mathbb{T} \colon \cdots \to T_2 \to T_1 \to T_0 \to T_{-1} \to T_{-2} \to \cdots,$$
i.e.\ an acyclic complex of finitely generated projective modules with the following properties
(see \cite[Theorem 3.2]{AvramovMartsinkovsky}):
\begin{enumerate}
\item the dual complex $\mathbb{T}^*$ is acyclic,
\item there exists a projective resolution $\mathbb{P}$ of $M$ and a chain map $\mathbb{T} \xrightarrow{f} \mathbb{P}$ with the property that $f_n$ is bijective for $n \ge d$.
\end{enumerate}
Property (2) implies that $\mathbb{T}$ is ``eventually" a projective resolution of $M$. Given another $\La$-module $N$ and an integer $n \in \mathbb{Z}$, the \emph{Tate cohomology group} $\Tateext_{\La}^n(M,N)$ is the $n$th cohomology of the complex $\Hom_{\La}( \mathbb{T},N)$. If $N$ is a right module, the \emph{Tate homology group} $\Tatetor^{\La}_n(N,M)$ is the $n$th homology of the complex $N \otimes_{\La} \mathbb{T}$. Naturally, the Tate (co)homology is independent of the complete resolution of $M$, and, in the homological case, it can be computed using a complete resolution of $N$ \cite{ChristensenJorgensen}. Moreover, by property (2) there are isomorphisms
\begin{eqnarray*}
\Tateext_{\La}^n(M,N) & \cong & \Ext_{\La}^n(M,N) \\
\Tatetor^{\La}_n(N,M) & \cong & \Tor^{\La}_n(N,M)
\end{eqnarray*}
for all $n \ge d+1$. The original cohomological definition is due to Tate, who introduced the cohomology groups for modules over the integral group ring of a finite group in order to study class field theory (cf.\ \cite[XII, \S 3]{CartanEilenberg}).

Having recalled the classical defnition of Tate cohomology and homology, we may now define the Hochschild cohomological and homological versions.
\begin{definition}
Let $k$ be a commutative ring and $\La$ a $k$-algebra such that the enveloping algebra $\Lae$ is two-sided Noetherian and Gorenstein. For an integer $n \in \mathbb{Z}$ and a bimodule $B$, the $n$th \emph{Tate-Hochschild cohomology} group $\TateHH^n ( \La, B)$ and the $n$th \emph{Tate-Hochschild homology} group $\TateHH_n ( \La, B)$ are defined as
\begin{eqnarray*}
\TateHH^n( \La, B ) & = & \Tateext_{\Lae}^n( \La, B) \\
\TateHH_n( \La, B ) & = & \Tatetor^{\Lae}_n( B, \La ).
\end{eqnarray*}
\end{definition}
Note that if the Gorenstein dimension of the enveloping algebra is $d$, then for every $n \ge d+1$ there are isomorphisms $\TateHH^n( \La, B ) \cong \Ext_{\Lae}^n( \La, B)$ and $\TateHH_n( \La, B ) \cong \Tor^{\Lae}_n( B, \La )$. In particular, when $\La$ is projective as a $k$-module, then there are isomorphisms
\begin{eqnarray*}
\TateHH^n( \La, B ) & \cong & \HH^n( \La, B ) \\
\TateHH_n( \La, B ) & \cong & \HH_n( \La, B )
\end{eqnarray*}
whenever $n \ge d+1$. A special case appears when the enveloping algebra is two-sided Noetherian and selfinjective. By definition, the enveloping algebra is then of Gorenstein dimension zero, and the Tate-Hochschild (co)homology groups are therefore defined and agree with the classical Hochschild (co)homology groups in all positive degrees. In particular, this is the case for finite dimensional Frobenius algebras (see the next section); for such algebras, the Tate-Hochschild (co)homology agrees with the \emph{stable Hochschild (co)homology} introduced in \cite{EuSchedler}.

We shall mainly be working with finite dimensional $k$-algebras (hence $k$ will be a field), hence the requirement (in the definition of Gorenstein algebras) that the enveloping algebra be two-sided Noetherian is unnecessary. In other words, a finite dimensional algebra is Gorenstein if and only if its injective dimensions as a left and right module over itself are finite. It is known that in this case the two injective dimensions are the same. The following result shows that if a finite dimensional algebra is Gorenstein, then so is its enveloping algebra. We include a proof due to the lack of a reference. Consequently, Tate-Hochschild (co)homology is defined for finite dimensional Gorenstein algebras. Note that the result shows in particular that the enveloping algebra of a selfinjective algebra is again selfinjective.

\begin{lemma}\label{tensorgorenstein}
If $k$ is a field and $\La$ and $\Gamma$ are finite dimensional Gorenstein $k$-algebras of Gorenstein dimensions $s$ and $t$, respectively, then their tensor product $\La \otimes_k \Gamma$ is Gorenstein of Gorenstein dimension at most $s+t$. In particular, the enveloping algebra $\Lae$ is Gorenstein of Gorenstein dimension at most $2s$.
\end{lemma}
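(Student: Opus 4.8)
The plan is to bound the injective dimension of $\La \otimes_k \Gamma$ as a module over itself, on each side separately. First I would record the reductions. Since $(\La \otimes_k \Gamma)^{\op} = \La^{\op} \otimes_k \Gamma^{\op}$, and since $\La^{\op}$ and $\Gamma^{\op}$ are again finite dimensional Gorenstein $k$-algebras of Gorenstein dimensions $s$ and $t$ (the injective dimension of $\La^{\op}$ as a left module over itself is that of $\La$ as a right module over itself, namely $s$, and similarly for $\Gamma$), it suffices to prove that the injective dimension of $\La \otimes_k \Gamma$ as a \emph{left} module over itself is at most $s+t$; the corresponding bound for right modules then follows by applying this to $\La^{\op}$ and $\Gamma^{\op}$. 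As the two self-injective dimensions of a finite dimensional algebra agree whenever they are finite, this yields the first assertion, and the statement about $\Lae = \La \otimes_k \La^{\op}$ is the special case $\Gamma = \La^{\op}$, $t=s$.

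The technical heart of the argument is the following: if $U$ is an injective left $\La$-module and $V$ is an injective left $\Gamma$-module, then $U \otimes_k V$ is an injective left $\La \otimes_k \Gamma$-module. To see this I would first note that there is an isomorphism of left $\La \otimes_k \Gamma$-modules $D(\La) \otimes_k D(\Gamma) \cong D(\La \otimes_k \Gamma)$, coming from the standard isomorphism $\Hom_k(\La,k) \otimes_k \Hom_k(\Gamma,k) \xrightarrow{\ \sim\ } \Hom_k(\La \otimes_k \Gamma,k)$ for finite dimensional vector spaces (one checks directly that it is $\La \otimes_k \Gamma$-linear for the module structures induced by the right regular structures on $\La$, $\Gamma$ and $\La \otimes_k\Gamma$). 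Now $D(\La)$ is an injective cogenerator for the category of finitely generated left $\La$-modules, so any finitely generated injective left $\La$-module is a direct summand of some $D(\La)^m$, and likewise $V$ is a summand of some $D(\Gamma)^n$; hence $U \otimes_k V$ is a direct summand of $D(\La)^m \otimes_k D(\Gamma)^n \cong \big(D(\La) \otimes_k D(\Gamma)\big)^{mn} \cong D(\La \otimes_k \Gamma)^{mn}$, and is therefore injective over $\La \otimes_k \Gamma$. (Since $\La$ is a finitely generated module over a finite dimensional algebra, the injective resolutions used below can be taken finitely generated, so the finitely generated case suffices.)

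With this in hand I would finish as follows. Choose injective resolutions $0 \to \La \to I^0 \to \cdots \to I^s \to 0$ of $\La$ over $\La$ and $0 \to \Gamma \to J^0 \to \cdots \to J^t \to 0$ of $\Gamma$ over $\Gamma$, which exist of these lengths because the Gorenstein dimensions are $s$ and $t$. Let $C^{\bullet} = \operatorname{Tot}(I^{\bullet} \otimes_k J^{\bullet})$, a complex of left $\La \otimes_k \Gamma$-modules with $C^n = \bigoplus_{i+j=n} I^i \otimes_k J^j$, concentrated in degrees $0,\dots,s+t$; by the previous paragraph each $C^n$ is injective over $\La \otimes_k \Gamma$. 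Because $k$ is a field every $k$-module is flat, so the Künneth formula has no correction terms and gives $\Ho^n(C^{\bullet}) \cong \bigoplus_{i+j=n} \Ho^i(I^{\bullet}) \otimes_k \Ho^j(J^{\bullet})$; since $I^{\bullet}$ and $J^{\bullet}$ have homology $\La$ and $\Gamma$ concentrated in degree $0$, we obtain $\Ho^0(C^{\bullet}) \cong \La \otimes_k \Gamma$ and $\Ho^n(C^{\bullet}) = 0$ for $n>0$, with coaugmentation $\La \otimes_k \Gamma \hookrightarrow C^0 = I^0 \otimes_k J^0$ the tensor product of the two coaugmentations. Thus $C^{\bullet}$ is an injective resolution of $\La \otimes_k \Gamma$ over itself of length at most $s+t$, so $\id_{\La \otimes_k \Gamma}(\La \otimes_k \Gamma) \le s+t$, as needed.

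The only genuine obstacle is the injectivity statement in the second paragraph — over an arbitrary ground ring the tensor product of injective modules need not be injective — but over a field the passage through $D(-)$ makes it transparent; the rest is routine Künneth bookkeeping.
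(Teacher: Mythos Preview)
Your proposal is correct and follows essentially the same route as the paper: tensor the two injective resolutions and verify that the terms of the resulting total complex are injective over $\La\otimes_k\Gamma$. The only cosmetic difference is in the injectivity step---you embed a finitely generated injective into a power of $D(\La)$ and use $D(\La)\otimes_k D(\Gamma)\cong D(\La\otimes_k\Gamma)$ directly, whereas the paper dualizes to projectives (noting $D(I_\La)$ is projective, that tensor products of projectives are projective, and then dualizing back)---and you make the exactness of the total complex explicit via K\"unneth, which the paper leaves implicit.
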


\begin{proof}
Choose injective resolutions
$$0 \to \La \to I^0_{\La} \to \cdots \to I^s_{\La} \to 0$$
and
$$0 \to \Gamma \to I^0_{\Gamma} \to \cdots \to I^t_{\Gamma} \to 0$$
over $\La$ and $\Gamma$, respectively, both as left modules. When we delete the algebras and tensor the resulting complexes over $k$, we obtain a complex
$$\mathbb{E} \colon 0 \to E^0 \to E^1 \to \cdots \to E^{s+t} \to 0$$
in which $E^n = \oplus_{j=0}^n (I^j_{\La} \otimes_k I^{n-j}_{\Gamma})$. In general, if $I_{\La}$ and $I_{\Gamma}$ are injective left modules over $\La$ and $\Gamma$, respectively, then the right modules $D(I_{\La})$ and $D(I_{\Gamma})$ are projective, and so $D(I_{\La}) \otimes_k D(I_{\Gamma})$ is a projective right ($\La \otimes_k \Gamma$)-module. But this   right ($\La \otimes_k \Gamma$)-module is isomorphic to $D(I_{\La} \otimes_k I_{\Gamma})$, and consequently the left ($\La \otimes_k \Gamma$)-module $I_{\La} \otimes_k I_{\Gamma}$ is injective. This shows that the complex $\mathbb{E}$ is an injective resolution of $\La \otimes_k \Gamma$ as a left module over itself. Similarly, by starting with injective resolutions of right modules, we end up with an injective resolution (of length $s+t$) of $\La \otimes_k \Gamma$ as a right module over itself. This proves the first part of the lemma. The second part follows immediately, since the opposite algebra of a Gorenstein algebra is also Gorenstein of the same dimension.
\end{proof}

Note also that when $\La$ is finite dimensional algebra and $B$ is a $\La$-$\La$-bimodule which is finitely
generated as either a left or right $\La$-module, then the Tate-Hochschild homology $\TateHH_n(\La,B)$
and cohomology $\TateHH^n(\La,B)$ are just finite dimensional vector spaces over $k$ for all
$n\in\mathbb Z$.

The main results in this section establish Tate-Hochschild duality isomorphisms for Gorenstein algebras. These results follow from a more general duality result for Tate homology, which we prove after the following two lemmas. The first lemma is well known in the case of ordinary (co)homology: over any finite dimensional algebra $\Gamma$ there is an isomorphism
$$D \left ( \Ext_{\Gamma}^i(X,Y) \right ) \cong \Tor^{\Gamma}_i(D(Y),X)$$
for all $i \ge 0$ and all modules $X,Y$ (cf.\ \cite[VI, Proposition 5.3]{CartanEilenberg}).

\begin{lemma}\label{dualityexttor}
Let $\La$ be a finite dimensional Gorenstein algebra and $M$ and $N$ two left $\La$-modules, with $M$ finitely generated. Then there is an isomorphism
$$D \left ( \Tateext_{\La}^n( M,N ) \right ) \cong \Tatetor^{\La}_n( D(N),M )$$
for all $n \in \mathbb{Z}$. In particular, if $B$ is a bimodule, then there is an isomorphism
$$D \left ( \TateHH^n( \La, B ) \right ) \cong \TateHH_n( \La, D(B) )$$
for all $n \in \mathbb{Z}$.
\end{lemma}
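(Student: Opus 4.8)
The plan is to route everything through a single complete resolution of $M$ and to exploit that the $k$-dual functor $D=\Hom_k(-,k)$ is exact, since $k$ is a field. Concretely, fix a complete resolution $\mathbb{T}$ of $M$ consisting of \emph{finitely generated} projective left $\La$-modules; such a $\mathbb{T}$ exists by \cite[Theorem 3.2]{AvramovMartsinkovsky}, because $\La$ is Gorenstein and $M$ is finitely generated. By the definitions recalled above, $\Tateext_{\La}^n(M,N)=\Ho^n\Hom_{\La}(\mathbb{T},N)$ and $\Tatetor^{\La}_n(D(N),M)=\Ho_n\bigl(D(N)\otimes_{\La}\mathbb{T}\bigr)$. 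Since $D$ is exact it commutes with (co)homology: for any complex $C$ of $k$-vector spaces there is a natural isomorphism $D(\Ho^n C)\cong\Ho_n(DC)$. Applying this to $C=\Hom_{\La}(\mathbb{T},N)$ reduces the problem to producing an isomorphism of complexes $D(N)\otimes_{\La}\mathbb{T}\cong D\bigl(\Hom_{\La}(\mathbb{T},N)\bigr)$.

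For this I would write down, for a finitely generated projective left $\La$-module $P$, the natural map
\[
\theta_P\colon D(N)\otimes_{\La}P\longrightarrow D\bigl(\Hom_{\La}(P,N)\bigr),\qquad \theta_P(f\otimes p)(\varphi)=f\bigl(\varphi(p)\bigr),
\]
and check that it is well defined (that is, $\La$-balanced: $f\lambda\otimes p$ and $f\otimes\lambda p$ have the same image), natural in $P$, and bijective for $P=\La$; additivity then gives bijectivity for every finitely generated projective $P$. Naturality of $\theta$ applied to the differentials $T_n\to T_{n-1}$ of $\mathbb{T}$ assembles the maps $\theta_{T_n}$ into a morphism of complexes $D(N)\otimes_{\La}\mathbb{T}\to D(\Hom_{\La}(\mathbb{T},N))$, with the standard convention that $\Hom_{\La}(\mathbb{T},N)$ carries $\Hom_{\La}(T_n,N)$ in cohomological degree $n$, so that $D(\Hom_{\La}(T_n,N))$ sits in homological degree $n$, matching $D(N)\otimes_{\La}T_n$. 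Since each $\theta_{T_n}$ is an isomorphism, this morphism is an isomorphism of complexes. Taking $\Ho_n$ and combining with the reduction above yields
\[
D\bigl(\Tateext_{\La}^n(M,N)\bigr)\cong \Ho_n\bigl(D(N)\otimes_{\La}\mathbb{T}\bigr)=\Tatetor^{\La}_n(D(N),M).
\]
For the ``in particular'' clause, apply this with $\La$ replaced by $\Lae$, with $M=\La$ (which is finitely generated over $\Lae$) and $N=B$; this is legitimate since $\Lae$ is Gorenstein by Lemma \ref{tensorgorenstein}, and unwinding the definitions of $\TateHH^n(\La,B)$ and $\TateHH_n(\La,D(B))$ gives the stated isomorphism.

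The verifications that $\theta_P$ is $\La$-balanced and bijective, and that it commutes with the differentials of $\mathbb{T}$, are routine; the one point that needs a little care is the bookkeeping of homological versus cohomological indexing, so that $\theta_{T_n}$ is an honest chain map rather than a chain map up to a degree shift (a wrong convention here would replace $n$ by $-n$ in the final formula). I do not anticipate a genuine obstacle: the only non-formal ingredients — exactness of $D$ over a field, and the existence of a complete resolution of the finitely generated module $M$ by finitely generated projectives over the finite dimensional Gorenstein algebra $\La$ — are already available.
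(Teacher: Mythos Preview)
Your proof is correct, but it follows a different route from the paper's. You establish the isomorphism directly at the level of complexes, using the natural transformation $\theta_P\colon D(N)\otimes_{\La}P\to D(\Hom_{\La}(P,N))$ together with exactness of $D$ over the field $k$. The paper instead uses a dimension-shifting argument: it picks $m$ with $m+n$ larger than the Gorenstein dimension $d$, replaces $M$ by the syzygy $\Omega_{\La}^{-m}(\mathbb T)$, and then invokes the classical isomorphism $D(\Ext_{\La}^i(X,Y))\cong\Tor^{\La}_i(D(Y),X)$ from \cite[VI, Proposition 5.3]{CartanEilenberg} in degree $n+m>d$, where Tate (co)homology agrees with ordinary (co)homology; finally it shifts back. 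Your argument is more self-contained and essentially re-proves the Cartan--Eilenberg duality in the complete-resolution setting, whereas the paper's argument is a clean reduction to that known result. Both hinge on the same ingredients you identified (a complete resolution by finitely generated projectives, and the exactness of $D$), so neither is materially harder than the other.
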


\begin{proof}
Let $\mathbb{T}$ be a complete resolution of $M$, and for each $i \in \mathbb{Z}$, denote by $\Omega_{\La}^i ( \mathbb{T} )$ the image of the $i$th differential in $\mathbb{T}$. Fix $n\in\mathbb Z$, and denote the Gorenstein dimension of $\La$ by $d$. Let $m$ be any integer with the property that $m+n>d$. Then there are isomorphisms
\begin{eqnarray*}
D \left (  \Tateext_{\La}^n( M,N ) \right ) & \cong & D \left ( \Ho^n( \Hom_{\La}( \mathbb{T},N ) ) \right ) \\
& \cong & D \left (  \Tateext_{\La}^{n+m}( \Omega_{\La}^{-m} ( \mathbb{T} ),N ) \right ) \\
& \cong & D \left ( \Ext_{\La}^{n+m}( \Omega_{\La}^{-m} ( \mathbb{T} ),N ) \right ) \\
& \cong & \Tor^{\La}_{n+m}( D(N), \Omega_{\La}^{-m} ( \mathbb{T} ) ) \\
& \cong & \Tatetor^{\La}_{n+m}( D(N), \Omega_{\La}^{-m} ( \mathbb{T} ) ) \\
& \cong & \Ho_n( D(N) \otimes_{\La} \mathbb{T} )\\
& \cong & \Tatetor^{\La}_{n}( D(N), M ),
\end{eqnarray*}
and we have proved the first part. The second part follows from the first and the definition of Tate-Hochschild (co)homology.
\end{proof}

The second lemma seems to be well known; it is a special case of \cite[Proposition 20.10]{AndersonFuller}. We include a proof.

\begin{lemma}\label{dualityprojective}
Let $\La$ be any ring and $M$ a left $\La$-module. If $P$ is a finitely generated projective left $\La$-module, then there is an isomorphism
$$\psi_P \colon \Hom_{\La}(P, \La ) \otimes_{\La} M \to \Hom_{\La} (P,M)$$
given by $\psi_P(f \otimes m)(p) = f(p)m$. This isomorphism is natural in $P$.
\end{lemma}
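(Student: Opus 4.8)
The plan is the standard ``reduce to the free case via naturality'' argument. First I would check that $\psi_P$ is a well-defined homomorphism of abelian groups for \emph{every} left $\La$-module $P$, not just the finitely generated projective ones. Here $\Hom_{\La}(P,\La)$ carries its natural right $\La$-module structure $(f\cdot\lambda)(p)=f(p)\lambda$, so the tensor product $\Hom_{\La}(P,\La)\otimes_{\La}M$ makes sense; one verifies that $(f,m)\mapsto\bigl(p\mapsto f(p)m\bigr)$ is additive in each variable and $\La$-balanced, using $f(\lambda p)=\lambda f(p)$ to see that the resulting map $P\to M$ is indeed $\La$-linear (hence lies in $\Hom_{\La}(P,M)$), and $f(p)\lambda m=(f\lambda)(p)\,m$ to see that $f\lambda\otimes m$ and $f\otimes\lambda m$ have the same image. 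Thus $\psi_P$ is defined, and a routine diagram chase shows that for any $\La$-homomorphism $g\colon P\to P'$ the square relating $\psi_P$ and $\psi_{P'}$, with the maps induced by $g$ on the two functors $\Hom_{\La}(-,\La)\otimes_{\La}M$ and $\Hom_{\La}(-,M)$, commutes. In other words $\psi$ is a natural transformation of functors on left $\La$-modules, which settles the naturality assertion of the lemma.

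Next I would check that $\psi_{\La}$ is an isomorphism. Identifying $\Hom_{\La}(\La,\La)$ with $\La$ via $f\mapsto f(1)$, the map $\psi_{\La}$ becomes the canonical isomorphism $\La\otimes_{\La}M\xrightarrow{\ \sim\ }M$, $\lambda\otimes m\mapsto\lambda m$. Now both functors $\Hom_{\La}(-,\La)\otimes_{\La}M$ and $\Hom_{\La}(-,M)$ carry finite direct sums of left $\La$-modules to direct sums: for $\Hom$ out of a finite direct sum this is the usual splitting, and then $-\otimes_{\La}M$ is additive. By naturality applied to the canonical inclusions and projections of a direct sum, $\psi$ is compatible with these splittings, so $\psi_{\La^n}$ is an isomorphism for every $n\ge 0$.

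Finally, for an arbitrary finitely generated projective left $\La$-module $P$, choose $Q$ and $n$ with $P\oplus Q\cong\La^n$. Then $\psi_{P\oplus Q}$ is an isomorphism by the previous step, and under the direct-sum decompositions of the two functors it is identified, again by naturality, with $\psi_P\oplus\psi_Q$; a summand of an isomorphism that respects the decomposition is itself an isomorphism, so $\psi_P$ is an isomorphism, completing the proof. I do not expect a genuine obstacle here: the only point requiring care is the bookkeeping of the left versus right module structures and the verification that $\psi$ is compatible with the direct-sum decompositions of both functors — and that compatibility is precisely what the naturality statement delivers, so it is what legitimizes passing from the free case to the projective case.
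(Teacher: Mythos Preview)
Your proof is correct and follows essentially the same approach as the paper: check well-definedness and naturality, verify the case $P=\La$ directly, extend to finite free modules by additivity, and then to summands. The only difference is organizational --- you establish naturality first and explicitly invoke it to pass from free to projective, whereas the paper first proves the isomorphism and then checks naturality separately --- but the underlying argument is the same.
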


\begin{proof}
The map $\psi_P$ is well defined since the pairing
\begin{eqnarray*}
\Hom_{\La}(P, \La ) \times M & \xrightarrow{\theta} & \Hom_{\La} (P,M) \\
(f,m) & \mapsto & (p \mapsto f(p)m )
\end{eqnarray*}
satisfies $\theta ( f \lambda, m ) = \theta (f, \lambda m )$ for all $\lambda \in \La$. When $P = \La$, this map is just the composition of the isomorphisms
$$\Hom_{\La}( {_{\La}\La}, {_{\La}\La}_{\La} ) \otimes_{\La} M \to \La_{\La} \otimes_{\La} M \to M \to \Hom_{\La} ( {_{\La}\La},M)$$
and hence an isomorphism itself. Extending to the case when $P$ is a finitely generated free module, and then to the case when $P$ is a summand of such a module, we see that the first half of the lemma holds.

As for the naturality in $P$, let $P_1 \xrightarrow{h} P_2$ be a map between finitely generated projective left $\La$-modules, and consider the diagram
$$\xymatrix{
\Hom_{\La}(P_2, \La ) \otimes_{\La} M \ar[d]^{\psi_{P_2}} \ar[r]^{h^* \otimes 1_M} & \Hom_{\La}(P_1, \La ) \otimes_{\La} M \ar[d]^{\psi_{P_1}} \\
\Hom_{\La} (P_2,M) \ar[r]^{h^*} & \Hom_{\La} (P_1,M) }$$
If $f \in \Hom_{\La}(P_2, \La ), m \in M$ and $p \in P_1$, then
\begin{eqnarray*}
\left [ (h^* \circ \psi_{P_2})(f \otimes m) \right ] (p) & = & ( \psi_{P_2}(f \otimes m) \circ h) (p) \\
& = & \psi_{P_2} (f \otimes m) \left ( h(p) \right ) \\
& = & f \left ( h(p) \right ) m \\
& = & (f \circ h)(p)m \\
& = & \psi_{P_1}(f \circ h \otimes m) (p) \\
& = & \left [ ( \psi_{P_1} \circ (h^* \otimes 1_M))(f \otimes m) \right ] (p),
\end{eqnarray*}
hence the diagram commutes.
\end{proof}

We are now ready to prove the general duality result for Tate homology and cohomology. Recall first that when $k$ is a field and $\La$ is a finite dimensional $k$-algebra, then every finitely generated module $M$ admits a \emph{minimal projective resolution}
$$\cdots \to P_2 \xrightarrow{d_2} P_1 \xrightarrow{d_1} P_0 \xrightarrow{d_0} M \to 0.$$
This projective resolution appears as a direct summand of every projective resolution of $M$, and it is unique up to isomorphism. For every $n \ge 0$, the $n$th \emph{syzygy} of $M$, denoted $\Omega_{\La}^n(M)$, is the image of the map $d_n$.

\begin{theorem}\label{dualitygeneral}
Let $\La$ be a finite dimensional Gorenstein algebra, $M,L$ two finitely generated left modules, and $N$ a finitely generated right module. If the Gorenstein dimension of $\La$ is at most $d$, then there are vector space isomorphisms
\begin{eqnarray*}
\Tatetor^{\La}_n(N,M) & \cong & \Tatetor^{\La}_{-(n-d+1)}( \Omega_{\La}^d(M)^*, D(N)) \\
\Tateext_{\La}^n(M,L) & \cong & \Tateext_{\La}^{-(n-d+1)}( L,D( \Omega_{\La}^d(M)^*))
\end{eqnarray*}
for all $n \in \mathbb{Z}$.
\end{theorem}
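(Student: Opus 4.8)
The plan is to reduce to the case of a \emph{totally reflexive} module, dualize its complete resolution, and then apply Lemmas \ref{dualityprojective} and \ref{dualityexttor}. Set $G=\Omega_{\La}^d(M)$. Since the Gorenstein dimension of $\La$ is at most $d$, the module $G$ is totally reflexive: if $\mathbb{T}$ is a complete resolution of $M$, then property (2) of complete resolutions shows that the shift $\mathbb{U}$ with $U_i=T_{i+d}$ is a complete resolution of $G$, with $G=\Im(U_0\to U_{-1})=\Coker(U_1\to U_0)$. Comparing $N\otimes_{\La}\mathbb{T}$ with $N\otimes_{\La}\mathbb{U}$, and $\Hom_{\La}(\mathbb{T},L)$ with $\Hom_{\La}(\mathbb{U},L)$, term by term gives the dimension-shift isomorphisms $\Tatetor^{\La}_n(N,M)\cong\Tatetor^{\La}_{n-d}(N,G)$ and $\Tateext_{\La}^n(M,L)\cong\Tateext_{\La}^{n-d}(G,L)$. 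It therefore suffices to prove, for a totally reflexive $G$ and all $m\in\mathbb{Z}$, that $\Tatetor^{\La}_m(N,G)\cong\Tatetor^{\La}_{-(m+1)}(G^{*},D(N))$ and $\Tateext_{\La}^m(G,L)\cong\Tateext_{\La}^{-(m+1)}(L,D(G^{*}))$.

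The second ingredient is that the dual complex $\mathbb{U}^{*}$, reindexed as the complex $\mathbb{W}$ of right $\La$-modules with $W_j=U_{-1-j}^{*}$, is a complete resolution of $G^{*}$. Being (up to reindexing) the dual of the complete resolution $\mathbb{U}$, the complex $\mathbb{W}$ is an acyclic complex of finitely generated projective right modules with acyclic dual; and dualizing $U_1\to U_0\to G\to 0$, together with the acyclicity of $\mathbb{U}^{*}$, shows that $\Im(W_0\to W_{-1})=G^{*}$, so the non-negative part of $\mathbb{W}$ is a projective resolution of $G^{*}$. Observe that $G^{*}$ occupies homological degree $-1$ of $\mathbb{U}^{*}$ rather than degree $0$; this degree shift is exactly why the exponent in the theorem reads $-(n-d+1)$ and not $-(n-d)$.

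For the homological identity I would compute $\Tatetor^{\La}_{-(m+1)}(G^{*},D(N))$ as $H_{-(m+1)}(\mathbb{W}\otimes_{\La}D(N))$, using that Tate homology can be computed from a complete resolution of either variable. By Lemma \ref{dualityprojective}, $W_j\otimes_{\La}D(N)=U_{-1-j}^{*}\otimes_{\La}D(N)\cong\Hom_{\La}(U_{-1-j},D(N))$, and hom-tensor adjunction rewrites this as $\Hom_k(N\otimes_{\La}U_{-1-j},k)=D((N\otimes_{\La}\mathbb{U})_{-1-j})$; the naturality asserted in Lemma \ref{dualityprojective} makes these identifications compatible with the differentials. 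As $D=\Hom_k(-,k)$ is exact, taking homology yields $H_j(\mathbb{W}\otimes_{\La}D(N))\cong D(H_{-1-j}(N\otimes_{\La}\mathbb{U}))$, so that $\Tatetor^{\La}_{-(m+1)}(G^{*},D(N))\cong D(\Tatetor^{\La}_m(N,G))$, which, both sides being finite-dimensional, gives the claim. The cohomological identity then follows formally: Lemma \ref{dualityexttor} (its finite-generation hypothesis being met first by $M$, then by $L$) gives $\Tateext_{\La}^n(M,L)\cong D(\Tatetor^{\La}_n(D(L),M))$; the homological case, applied with $N=D(L)$, rewrites $\Tatetor^{\La}_n(D(L),M)$ as $\Tatetor^{\La}_{-(n-d+1)}(\Omega_{\La}^d(M)^{*},L)$; and a second use of Lemma \ref{dualityexttor} identifies this with $D(\Tateext_{\La}^{-(n-d+1)}(L,D(\Omega_{\La}^d(M)^{*})))$, so cancelling the double $k$-dual yields the result.

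The only real difficulty I foresee is the bookkeeping with degree shifts, specifically (i) verifying via property (2) that a complete resolution of $M$ truly shifts to one of $\Omega_{\La}^d(M)$, and (ii) keeping track of the fact that $G^{*}$ occupies degree $-1$ of the reindexed dual complex $\mathbb{W}$, which is what generates the offset in $-(n-d+1)$. The remaining ingredients are routine: that dualizing a complete resolution of a totally reflexive module gives a complete resolution of its dual, that $\Hom_{\La}(P,\La)\otimes_{\La}-\cong\Hom_{\La}(P,-)$ for finitely generated projective $P$, and the classical $k$-duality between $\Ext$ and $\Tor$. It is worth flagging that the isomorphisms one actually constructs literally involve a $k$-dual; this causes no loss here only because every Tate (co)homology group in sight is a finite-dimensional $k$-vector space and the theorem claims merely vector-space isomorphisms.
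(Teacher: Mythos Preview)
Your proposal is correct and follows essentially the same route as the paper: reduce to the totally reflexive module $G=\Omega_\La^d(M)$ by shifting the complete resolution, dualize to obtain a complete resolution of $G^*$, identify $\mathbb{T}^*\otimes_\La D(N)$ with $\Hom_\La(\mathbb{T},D(N))\cong D(N\otimes_\La\mathbb{T})$ via Lemma~\ref{dualityprojective} and adjunction, and then deduce the $\Tateext$ statement from the $\Tatetor$ statement by two applications of Lemma~\ref{dualityexttor}. The only point on which the paper is slightly more careful is your item (i): rather than asserting that an arbitrary complete resolution of $M$ shifts to one of the \emph{minimal} syzygy $\Omega_\La^d(M)$, the paper invokes \cite[Lemma 2.5 and Construction 3.6]{AvramovMartsinkovsky} to produce a specific complete resolution whose comparison map to the \emph{minimal} projective resolution is bijective in degrees $\ge d$, so that $\Im\partial_d$ is literally $\Omega_\La^d(M)$; you should either do the same or note that any discrepancy is a projective summand, invisible to Tate (co)homology.
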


\begin{proof}
Consider the minimal projective resolution
$$\cdots \to P_2 \to P_1 \to P_0 \to M \to 0$$
of $M$. It follows from \cite[Lemma 2.5 and Construction 3.6]{AvramovMartsinkovsky} that $M$ admits a complete resolution
$$\mathbb{T} \colon \cdots \to T_2 \xrightarrow{\partial_2} T_1 \xrightarrow{\partial_1} T_0 \xrightarrow{\partial_0} T_{-1} \xrightarrow{\partial_{-1}} T_{-2} \to \cdots$$
such that there exists a chain map
$$\xymatrix{
\cdots \ar[r] & T_2 \ar[r]^{\partial_2} \ar[d]^{f_2} & T_1 \ar[r]^{\partial_1} \ar[d]^{f_1} & T_0 \ar[r]^{\partial_0} \ar[d]^{f_0} & T_{-1} \ar[r]^{\partial_{-1}} \ar[d]^{f_{-1}} & T_{-2} \ar[d]^{f_{-2}} \ar[r] & \cdots \\
\cdots \ar[r] & P_2 \ar[r] & P_1 \ar[r] & P_0 \ar[r] & 0 \ar[r] & 0 \ar[r] & \cdots }$$
in which $f_n$ is bijective for $n \ge d$. Consequently the image of the map $\partial_d$ is isomorphic to $\Omega_{\La}^d(M)$; we denote this module by $X$. We must show that
$$\Tatetor^{\La}_n(N,X) \cong \Tatetor^{\La}_{-(n+1)}( X^*, D(N))$$
for all $n$, since $\Tatetor^{\La}_n(N,X)$ is isomorphic to $\Tatetor^{\La}_{n+d}(N,M)$.

By adjointness, the complexes $\Hom_k( N \otimes_{\La} \mathbb{T}, k )$ and $\Hom_{\La} ( \mathbb{T}, \Hom_k(N,k) )$ are isomorphic, that is, there is an isomorphism
$$D( N \otimes_{\La} \mathbb{T} ) \cong \Hom_{\La} ( \mathbb{T}, D(N) )$$
of complexes. Moreover, by Lemma \ref{dualityprojective}, there is an isomorphism
$$\xymatrix{
\cdots \to \Hom_{\La} ( T_{n-1} , D(N) ) \ar[d]^{\psi_{T_{n-1}}^{-1}} \ar[r]^{\partial_n} & \Hom_{\La} ( T_{n} , D(N) ) \ar[d]^{\psi_{T_{n}}^{-1}}  \ar[r] & \cdots \\
 \cdots \to \Hom_{\La} ( T_{n-1}, \La ) \otimes_{\La} D(N) \ar[r]^{\partial_n} & \Hom_{\La} ( T_{n}, \La ) \otimes_{\La} D(N)  \ar[r] & \cdots }$$
between the complexes $\Hom_{\La} ( \mathbb{T}, D(N) )$ and $\mathbb{T}^* \otimes_{\La} D(N)$. In general, note that if $C$ is a complex of finite dimensional vector spaces over $k$, then $\Ho_n(C)$ and
$\Ho_{-n}(D(C))$ have the same dimension, and are therefore isomorphic as vector spaces.  This explains
the second isomorphism below. Now since $\mathbb{T}^*$ is a complete resolution of $X^*$, we see that
\begin{eqnarray*}
\Tatetor^{\La}_n(N,X) & \cong & \Ho_{n+d} ( N \otimes_{\La} \mathbb{T} ) \\
& \cong & \Ho_{-(n+d)} \left ( D( N \otimes_{\La} \mathbb{T} ) \right ) \\
& \cong & \Ho_{-(n+d)} \left ( \Hom_{\La} ( \mathbb{T}, D(N) ) \right ) \\
& \cong & \Ho_{-(n+d)} \left ( \mathbb{T}^* \otimes_{\La} D(N) \right ) \\
& \cong & \Tatetor^{\La}_{-(n+1)}( X^*, D(N))
\end{eqnarray*}
and the proof of the homology part is complete.

For the cohomology part, we use Lemma \ref{dualityexttor} twice, together with the homology part we just proved:
\begin{eqnarray*}
D \left ( \Tateext_{\La}^n( M,L ) \right ) & \cong & \Tatetor^{\La}_n( D(L),M ) \\
& \cong & \Tatetor^{\La}_{-(n-d+1)}( \Omega_{\La}^d(M)^*, D^2(L)) \\
& \cong & \Tatetor^{\La}_{-(n-d+1)}( \Omega_{\La}^d(M)^*, L) \\
& \cong & D \left ( \Tateext_{\La}^{-(n-d+1)}( L,D( \Omega_{\La}^d(M)^*))\right ).
\end{eqnarray*}
Hence $\Tateext_{\La}^n( M,L )$ and $\Tateext_{\La}^{-(n-d+1)}( L,D( \Omega_{\La}^d(M)^*))$ are isomorphic.
\end{proof}

We can now prove the duality result for Tate-Hochschild (co)homology; this is just a direct application of Theorem \ref{dualitygeneral}.

\begin{theorem}\label{dualityTHgorenstein}
If $\La$ is a finite dimensional Gorenstein algebra of Gorenstein dimension $d$, and $B$ is a $\La$-$\La$-bimodule which is finitely generated as either a left or right $\La$-module, then
there are isomorphisms of vector spaces
\begin{eqnarray*}
\TateHH_n( \La, B ) & \cong & \Tatetor^{\Lae}_{-(n-2d+1)}(\Omega_{\Lae}^{2d}( \La )^* , D(B)) \\
\TateHH^n( \La, B ) & \cong & \Tateext_{\Lae}^{-(n-2d+1)} \left ( B, D \left ( \Omega_{\Lae}^{2d}( \La )^* \right ) \right )
\end{eqnarray*}
for all $n \in \mathbb{Z}$, where $(-)^* = \Hom_{\Lae}(-, \Lae )$. In particular, there are isomorphisms
\begin{eqnarray*}
\TateHH_n( \La, \La ) & \cong & \Tatetor^{\Lae}_{-(n-2d+1)}( \Omega_{\Lae}^{2d}( \La )^*, D( \La ) ) \\
\TateHH^n( \La, \La ) & \cong & \TateHH^{-(n-2d+1)} \left ( \La , D \left ( \Omega_{\Lae}^{2d}( \La )^* \right ) \right )
\end{eqnarray*}
for all $n \in \mathbb{Z}$.
\end{theorem}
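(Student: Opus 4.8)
The plan is to obtain Theorem \ref{dualityTHgorenstein} as a direct specialization of Theorem \ref{dualitygeneral}, applied not to $\La$ itself but to its enveloping algebra $\Lae$. By Lemma \ref{tensorgorenstein} the algebra $\Lae = \La \otimes_k \La^{\op}$ is again finite dimensional Gorenstein, of Gorenstein dimension at most $2d$. The module to which we apply the general duality is $M = \La$, viewed as a left $\Lae$-module (equivalently, a $\La$-$\La$-bimodule) via the multiplication action; this is finitely generated over $\Lae$. For the homology statement we take $N = B$, regarded as a right $\Lae$-module, which is finitely generated over $\Lae$ precisely because it is finitely generated as a left or right $\La$-module. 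For the cohomology statement we take $L = B$ as a left $\Lae$-module.

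The key steps, in order, are: (1) verify the hypotheses of Theorem \ref{dualitygeneral} hold with the ring $\Lae$ in place of $\La$, the integer $2d$ in place of $d$, and the modules $\La$, $B$ in the roles of $M$ and $N$ (resp.\ $L$); (2) substitute directly into the two displayed isomorphisms of Theorem \ref{dualitygeneral}, writing $\Omega_{\Lae}^{2d}(\La)^*$ for $\Omega_{\La}^d(M)^*$ with $(-)^* = \Hom_{\Lae}(-,\Lae)$; (3) translate the resulting $\Tateext$ and $\Tatetor$ over $\Lae$ into Tate-Hochschild notation using the definitions $\TateHH^n(\La,B) = \Tateext_{\Lae}^n(\La,B)$ and $\TateHH_n(\La,B) = \Tatetor^{\Lae}_n(B,\La)$. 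This yields the two general isomorphisms in the statement. For the ``in particular'' part, we specialize $B = \La$; in the homology line this is immediate, and in the cohomology line we additionally observe that $\Tateext_{\Lae}^{-(n-2d+1)}\!\big(\La, D(\Omega_{\Lae}^{2d}(\La)^*)\big)$ is by definition $\TateHH^{-(n-2d+1)}\!\big(\La, D(\Omega_{\Lae}^{2d}(\La)^*)\big)$, since $\La$ sits in the first argument.

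One small point to attend to is the index bookkeeping: Theorem \ref{dualitygeneral} is stated for any $d$ bounding the Gorenstein dimension, so applying it with the bound $2d$ from Lemma \ref{tensorgorenstein} is legitimate, and the exponent $-(n-2d+1)$ is exactly the substitution $d \mapsto 2d$ in $-(n-d+1)$. There is nothing to prove beyond checking that each object named in Theorem \ref{dualitygeneral} makes sense over $\Lae$ and matches the Tate-Hochschild dictionary; in particular $\Omega_{\Lae}^{2d}(\La)$ is formed from the minimal projective bimodule resolution of $\La$, and its $\Lae$-dual is a left $\Lae$-module as required.

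I do not expect any real obstacle here: the theorem is a formal corollary, and the only thing that needs care is confirming the finite-generatedness of $B$ over $\Lae$ (which follows from one-sided finite generation over $\La$, as already noted in the text) and getting the shift in the homological degree correct. The substantive content has already been expended in Theorem \ref{dualitygeneral} and Lemma \ref{tensorgorenstein}.
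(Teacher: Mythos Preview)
Your proposal is correct and matches the paper's own proof essentially line for line: the paper also invokes Lemma \ref{tensorgorenstein} to bound the Gorenstein dimension of $\Lae$ by $2d$, then applies Theorem \ref{dualitygeneral} with $M=\La$ and $N=B$ (resp.\ $L=B$) over $\Lae$, and reads off the ``in particular'' part by setting $B=\La$. The only extra content you supply is the explicit remark about finite generation of $B$ over $\Lae$ and the index bookkeeping, which the paper leaves implicit.
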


\begin{proof}
By Lemma \ref{tensorgorenstein}, the enveloping algebra $\Lae$ is Gorenstein of dimension at most $2d$, hence the isomorphisms in the first part follow immediately from Theorem \ref{dualitygeneral}:
$$\TateHH_n( \La, B ) =  \Tatetor^{\Lae}_n( B, \La ) \cong \Tatetor^{\Lae}_{-(n-2d+1)}( \Omega_{\Lae}^{2d}( \La )^*, D(B) )$$
$$\TateHH^n( \La, B )  =  \Tateext_{\Lae}^n( \La, B ) \cong \Tateext_{\Lae}^{-(n-2d+1)} \left ( B, D \left ( \Omega_{\Lae}^{2d}( \La )^* \right ) \right ).$$
The last part of the theorem follows directly from the first.
\end{proof}

We end this section by specializing to selfinjective algebras. Such and algebra is by definition Gorenstein, and its Gorenstein dimension is zero. Therefore, for this class of algebras, Theorem \ref{dualityTHgorenstein} takes the following form.

\begin{theorem}\label{dualityTHselfinjective}
If $\La$ is a finite dimensional selfinjective algebra, and $B$ is a bimodule, then there are isomorphisms
of vector spaces
\begin{eqnarray*}
\TateHH_n( \La, B ) & \cong & \Tatetor^{\Lae}_{-(n+1)}( \La^*, D(B) ) \\
\TateHH^n( \La, B ) & \cong & \Tateext_{\Lae}^{-(n+1)} \left ( B, D( \La^*) \right )
\end{eqnarray*}
for all $n \in \mathbb{Z}$, where $(-)^* = \Hom_{\Lae}(-, \Lae )$. In particular, there are isomorphisms
\begin{eqnarray*}
\TateHH_n( \La, \La ) & \cong & \Tatetor^{\Lae}_{-(n+1)}( \La^*, D( \La ) ) \\
\TateHH^n( \La, \La ) & \cong & \TateHH^{-(n+1)} \left ( \La , D( \La^* ) \right )
\end{eqnarray*}
for all $n \in \mathbb{Z}$.
\end{theorem}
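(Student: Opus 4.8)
The plan is to obtain Theorem \ref{dualityTHselfinjective} as the special case $d = 0$ of Theorem \ref{dualityTHgorenstein}. First I would recall that a finite dimensional selfinjective algebra $\La$ is, by definition, injective as a module over itself on both sides, hence Gorenstein of Gorenstein dimension $d = 0$. By Lemma \ref{tensorgorenstein}, the enveloping algebra $\Lae$ is then Gorenstein — in fact selfinjective — of Gorenstein dimension at most $2d = 0$, so the hypotheses of Theorem \ref{dualitygeneral}, and therefore of Theorem \ref{dualityTHgorenstein}, are satisfied with $d = 0$.

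Next I would simply substitute $d = 0$ into the two displayed isomorphisms of Theorem \ref{dualityTHgorenstein}. The syzygy $\Omega_{\Lae}^{2d}( \La ) = \Omega_{\Lae}^{0}( \La )$ is the image of the zeroth differential $P_0 \to \La$ in a minimal projective resolution of $\La$ over $\Lae$, and this image is $\La$ itself; consequently $\Omega_{\Lae}^{2d}( \La )^* = \La^*$, where $(-)^* = \Hom_{\Lae}(-, \Lae )$, and the shift $-(n - 2d + 1)$ collapses to $-(n+1)$. The first pair of isomorphisms of Theorem \ref{dualityTHgorenstein} thus becomes
\begin{eqnarray*}
\TateHH_n( \La, B ) & \cong & \Tatetor^{\Lae}_{-(n+1)}( \La^*, D(B) ), \\
\TateHH^n( \La, B ) & \cong & \Tateext_{\Lae}^{-(n+1)} \left ( B, D( \La^* ) \right ),
\end{eqnarray*}
for all $n \in \mathbb Z$, which is exactly the first assertion of the theorem.

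Finally, for the ``in particular'' statement I would put $B = \La$. The homology isomorphism is then immediate, and for the cohomology isomorphism I would observe that $\Tateext_{\Lae}^{-(n+1)}( \La, D( \La^* ) ) = \TateHH^{-(n+1)}( \La, D( \La^* ) )$ by the very definition of Tate-Hochschild cohomology, giving $\TateHH^n( \La, \La ) \cong \TateHH^{-(n+1)}( \La, D( \La^* ) )$ for all $n$.

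There is no substantive obstacle here: the entire content lies in Theorem \ref{dualitygeneral}. The only points demanding a moment's care are the index bookkeeping — checking that the shift degenerates correctly when $d = 0$ — and the (routine) identification of the zeroth syzygy of $\La$ over $\Lae$ with $\La$ itself, so that $\Omega_{\Lae}^{0}( \La )^*$ may be replaced by $\La^*$ without ambiguity.
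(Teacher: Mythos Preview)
Your proposal is correct and follows exactly the paper's approach: the paper simply states that a selfinjective algebra is Gorenstein of dimension zero and that Theorem \ref{dualityTHgorenstein} then specializes to the stated form. Your write-up is in fact more detailed than the paper's, which does not spell out the identification $\Omega_{\Lae}^0(\La)=\La$ or the index bookkeeping.
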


\section{Frobenius algebras}\label{frobeniusalg}

In this section, we apply the Tate-Hochschild duality results from the last section to a special class of selfinjective algebras. Recall that a finite dimensional algebra $\La$ is \emph{Frobenius} if $\La$ and $D(\La)$ are
isomorphic as left $\La$-modules, and \emph{symmetric} if they are
isomorphic as bimodules. Suppose $\La$ is Frobenius, and fix an
isomorphism $\phi \colon \La \to D( \La )$ of left modules. Let $y
\in \La$ be any element, and consider the linear functional $\phi(1)
\cdot y \in D(\La)$. This is the $k$-linear map $\La \to k$ defined
by $\lambda \mapsto \phi(1)(y \lambda)$, where $k$ is the ground field. Since $\phi$ is surjective,
there is an element $x \in \La$ having the property that $\phi(x) =
\phi(1) \cdot y$, giving $x \cdot \phi(1) = \phi(1) \cdot y$ since
$\phi$ is a map of left $\La$-modules. The map $y \mapsto x$ defines
a $k$-algebra automorphism on $\La$, and its inverse $\nu$ is the
\emph{Nakayama automorphism} of $\La$ (with respect to $\phi$). Thus
$\nu$ is defined by $\phi(1)(\lambda x) = \phi(1)( \nu(x) \lambda)$
for all $\lambda, x \in \La$. The Nakayama automorphism is unique up
to an inner automorphism. Namely, if $\phi' \colon \La \to D( \La )$
is another isomorphism of left modules yielding a Nakayama
automorphism $\nu'$, then there exists an invertible element $z \in
\La$ such that $\nu = z \nu' z^{-1}$.
Note that $\La$ is symmetric if and only if the Nakayama
automorphism is the identity.

Since $D( \La )$ is an injective left $\La$-module, a Frobenuis
algebra is always left selfinjective. However, the definition is
left-right symmetric. For if $\phi \colon _{\La}\La \to
D(\La_{\La})$ is an isomorphism of left $\La$-modules, we can
dualize and obtain an isomorphism $D( \phi ) \colon D^2( \La_{\La}
) \to D( _{\La}\La )$ of right modules. Composing with the natural
isomorphism $\La_{\La} \cong D^2( \La_{\La} )$, we obtain an
isomorphism $\La_{\La} \to D( _{\La}\La)$ of right $\La$-modules. This left-right symmetry implies that the opposite algebra of a Frobenius algebra is also Frobenius, and that its Nakayama automorphism is the inverse of the original one.

\begin{lemma}\label{frobeniusopposite}
If $\La$ is a Frobenius algebra with a Nakayama automorphism
$\nu$, then $\La^{\op}$ is Frobenius with $\nu^{-1}$ as a Nakayama
automorphism.
\end{lemma}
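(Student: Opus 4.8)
The plan is to track the Frobenius functional and the relation defining the Nakayama automorphism under passage to $\La^{\op}$. Fix the isomorphism $\phi \colon \La \to D(\La)$ of left $\La$-modules with respect to which $\nu$ was defined, and put $\pi = \phi(1) \in D(\La)$. Left $\La$-linearity of $\phi$ gives $\phi(\lambda)(x) = \pi(x\lambda)$ for all $\lambda, x \in \La$, and by definition $\nu$ is the unique $k$-algebra automorphism of $\La$ satisfying
\[
\pi(\lambda x) = \pi(\nu(x)\lambda) \qquad \text{for all } \lambda, x \in \La ;
\]
uniqueness here reflects the (two-sided) non-degeneracy of the associative form $(a,b) \mapsto \pi(ab)$, which is equivalent to $\phi$ being an isomorphism.

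Next I would make the left--right symmetric isomorphism for $\La^{\op}$ explicit. As in the discussion preceding the lemma, composing the dual $D(\phi)$ with the canonical isomorphism $\La \xrightarrow{\ \sim\ } D^2(\La)$ produces an isomorphism $\phi' \colon \La_{\La} \to D({}_{\La}\La)$ of right $\La$-modules --- equivalently, an isomorphism ${}_{\La^{\op}}\La^{\op} \to D(\La^{\op}_{\La^{\op}})$ of left $\La^{\op}$-modules --- so $\La^{\op}$ is Frobenius. Unwinding the evaluation map one finds $\phi'(a)(\lambda) = \phi(\lambda)(a) = \pi(a\lambda)$; in particular $\phi'(1) = \pi$, now regarded as an element of the common $k$-dual $D(\La) = D(\La^{\op})$.

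It remains to identify the Nakayama automorphism of $\La^{\op}$ with respect to $\phi'$. Writing $\cdot_{\op}$ for the multiplication of $\La^{\op}$, so that $a \cdot_{\op} b = ba$, the defining property of the Nakayama automorphism applied to the pair $(\La^{\op}, \phi')$ says that it is the unique automorphism $\mu$ with $\phi'(1)(\lambda \cdot_{\op} x) = \phi'(1)(\mu(x) \cdot_{\op} \lambda)$ for all $\lambda, x$, that is,
\[
\pi(x\lambda) = \pi(\lambda\, \mu(x)) \qquad \text{for all } \lambda, x \in \La .
\]
Substituting $\nu^{-1}(x)$ for $x$ in the relation for $\nu$ from the first paragraph gives $\pi(\lambda\, \nu^{-1}(x)) = \pi(\nu(\nu^{-1}(x))\lambda) = \pi(x\lambda)$, which is exactly the displayed relation with $\mu = \nu^{-1}$. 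By uniqueness, $\mu = \nu^{-1}$, as claimed. The only real work is bookkeeping --- checking that $\phi'$ is indeed a bijective homomorphism of left $\La^{\op}$-modules, which comes down to the corresponding facts for $\phi$ and the non-degeneracy of $\pi(ab)$ --- together with keeping the direction of the twist straight; the two places where a misread convention would quietly flip $\nu^{-1}$ to $\nu$ are the identification $\phi'(1) = \pi$ and this final substitution, so those deserve care.
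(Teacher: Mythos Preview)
Your proof is correct and follows essentially the same approach as the paper's: both construct the same isomorphism $\phi'$ (the paper calls it $\phi^{\op}$) via $\La \to D^2(\La) \xrightarrow{D(\phi)} D(\La)$, identify its value at $1$ with the original Frobenius functional, and then verify the Nakayama relation for $\nu^{-1}$ by the same substitution $x \mapsto \nu^{-1}(x)$. Your explicit introduction of $\pi = \phi(1)$ and the associated bilinear form is a mild notational streamlining, but the argument is the same.
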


\begin{proof}
As seen above, the definition of a Frobenius algebra is left-right
symmetric. Moreover, an isomorphism $\La \to D( \La )$ of right
$\La$-modules may be viewed as an isomorphism $\La^{\op} \to D(
\La^{\op} )$ of left $\La^{\op}$-modules. Hence $\La$ is Frobenius
if and only if $\La^{\op}$ is.

Now suppose $\La$ is Frobenius, and let $\phi \colon \La \to D( \La
)$ be an isomorphism of left modules with corresponding Nakayama
automorphism $\nu \colon \La \to \La$. The composition of
isomorphisms
$$\La \to D^2( \La ) \xrightarrow{D( \phi )} D( \La )$$
of right $\La$-modules can then be viewed as an isomorphism
$\phi^{\op}$ of left $\La^{\op}$-modules. Thus $\phi^{\op}(1)(
\lambda ) = \phi ( \lambda )(1)$ for all $\lambda \in \La^{\op}$.
Denote the multiplication of two elements $x$ and $y$ in $\La^{\op}$
by $x \cdot y$, so that $x \cdot y = yx$, where $yx$ is the ordinary
product in $\La$. Then
\begin{eqnarray*}
\phi^{\op}(1)( \lambda \cdot x ) & = & \phi^{\op}(1)( x \lambda ) \\
& = & \phi (x \lambda )(1) \\
& = & \phi ( \lambda \nu^{-1}(x))(1) \\
& = & \phi^{\op}(1)( \lambda \nu^{-1}(x)) \\
& = & \phi^{\op}(1)( \nu^{-1}(x) \cdot \lambda)
\end{eqnarray*}
for all $\lambda, x \in \La^{\op}$, hence $\nu^{-1}$ is a Nakayama
automorphism for $\La^{\op}$.
\end{proof}

The tensor product of two Frobenius algebras is also Frobenius, with the obvious Nakayama automorphism. We record this in the following lemma.

\begin{lemma}\label{frobeniustensor}
If $\La$ and $\Gamma$ are Frobenius $k$-algebras with Nakayama
automorphisms $\nu_{\La}$ and $\nu_{\Gamma}$, respectively, then
$\La \otimes_k \Gamma$ is Frobenius with $\nu_{\La} \otimes
\nu_{\Gamma}$ as a Nakayama automorphism.
\end{lemma}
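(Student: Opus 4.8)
The plan is to construct the required left-module isomorphism $\La \otimes_k \Gamma \xrightarrow{\sim} D(\La \otimes_k \Gamma)$ out of the given isomorphisms for $\La$ and $\Gamma$, and then to extract the Nakayama automorphism from the characterizing identity $\phi(1)(\lambda x) = \phi(1)(\nu(x)\lambda)$ recalled above.

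First I would fix isomorphisms $\phi_\La \colon \La \to D(\La)$ and $\phi_\Gamma \colon \Gamma \to D(\Gamma)$ of left modules giving rise to $\nu_\La$ and $\nu_\Gamma$, and set $\phi$ to be the composite
\[
\La \otimes_k \Gamma \xrightarrow{\phi_\La \otimes \phi_\Gamma} D(\La) \otimes_k D(\Gamma) \longrightarrow D(\La \otimes_k \Gamma),
\]
where the second map is the canonical one, $f \otimes h \mapsto (x \otimes y \mapsto f(x)h(y))$. This second map is bijective because $\La$ and $\Gamma$ are finite dimensional over the field $k$ (it is the identification already used in the proof of Lemma \ref{tensorgorenstein}), and $\phi_\La \otimes \phi_\Gamma$ is bijective since each factor is; so $\phi$ is a $k$-linear isomorphism. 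Since $\phi_\La \otimes \phi_\Gamma$ is manifestly a map of left $\La \otimes_k \Gamma$-modules for the outer tensor structures, the one point that needs checking is that the canonical map is one too, and for this I would write out the actions explicitly: $\lambda \otimes \gamma$ acts on $D(\La) \otimes_k D(\Gamma)$ as $(\lambda \cdot f) \otimes (\gamma \cdot h)$ and on $D(\La \otimes_k \Gamma)$ by precomposition with right multiplication by $\lambda \otimes \gamma$, and both send $f \otimes h$ to the functional $x \otimes y \mapsto f(x\lambda)\,h(y\gamma)$. This already shows $\La \otimes_k \Gamma$ is Frobenius.

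Next I would verify that $\nu_\La \otimes \nu_\Gamma$, which is plainly a $k$-algebra automorphism of $\La \otimes_k \Gamma$, is the Nakayama automorphism attached to $\phi$. By construction $\phi(1 \otimes 1)$ is the functional $x \otimes y \mapsto \phi_\La(1)(x)\,\phi_\Gamma(1)(y)$, so for $\lambda,x \in \La$ and $\gamma,y \in \Gamma$ one computes
\begin{align*}
\phi(1 \otimes 1)\bigl((\lambda \otimes \gamma)(x \otimes y)\bigr)
&= \phi_\La(1)(\lambda x)\,\phi_\Gamma(1)(\gamma y) \\
&= \phi_\La(1)(\nu_\La(x)\lambda)\,\phi_\Gamma(1)(\nu_\Gamma(y)\gamma) \\
&= \phi(1 \otimes 1)\bigl((\nu_\La \otimes \nu_\Gamma)(x \otimes y)(\lambda \otimes \gamma)\bigr),
\end{align*}
the middle step being the defining property of $\nu_\La$ and $\nu_\Gamma$. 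Both sides are $k$-bilinear in the pair $(\lambda \otimes \gamma,\, x \otimes y)$, so since elementary tensors span $\La \otimes_k \Gamma$ the identity $\phi(1)(u v) = \phi(1)\bigl((\nu_\La \otimes \nu_\Gamma)(v)\,u\bigr)$ holds for all $u,v \in \La \otimes_k \Gamma$, which is exactly the statement that $\nu_\La \otimes \nu_\Gamma$ is a Nakayama automorphism of $\La \otimes_k \Gamma$. The whole argument is bookkeeping; the only place to be a little careful is matching the one-sided module structures across the canonical pairing $D(\La) \otimes_k D(\Gamma) \cong D(\La \otimes_k \Gamma)$, so I would record those structures explicitly before the verification.
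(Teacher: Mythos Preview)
Your proposal is correct and follows essentially the same route as the paper's proof: both construct $\phi$ as the composite $\La \otimes_k \Gamma \xrightarrow{\phi_\La \otimes \phi_\Gamma} D(\La) \otimes_k D(\Gamma) \to D(\La \otimes_k \Gamma)$ and then verify the Nakayama identity on elementary tensors. If anything, your write-up is slightly more careful, as you make explicit why the canonical pairing is a left $(\La \otimes_k \Gamma)$-module map and why checking the identity on elementary tensors suffices.
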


\begin{proof}
Let $\phi_{\La} \colon \La \to D( \La )$ and $\phi_{\Gamma} \colon
\Gamma \to D( \Gamma )$ be isomorphisms of left modules, with
corresponding Nakayama automorphisms $\nu_{\La}$ and
$\nu_{\Gamma}$, respectively. Then the composition
$$\La \otimes_k \Gamma \xrightarrow{\phi_{\La} \otimes \phi_{\Gamma}} D( \La ) \otimes_k
D( \Gamma ) \to D( \La \otimes_k \Gamma )$$ of left $( \La
\otimes_k \Gamma )$-isomorphisms shows that $\La \otimes_k \Gamma$
is Frobenius.

Denote this composition by $\phi$. Then
\begin{eqnarray*}
\phi ( 1 \otimes 1 ) \left ( [ \lambda \otimes \gamma ][x \otimes
y] \right ) &
= & \phi ( 1 \otimes 1 ) ( \lambda x \otimes \gamma y ) \\
& = & \phi_{\La} ( \lambda x ) \phi_{\Gamma} ( \gamma y ) \\
& = & \phi_{\La} ( \nu_{\La}(x) \lambda ) \phi_{\Gamma} ( \nu_{\Gamma}(y) \gamma ) \\
& = & \phi ( 1 \otimes 1 ) ( \nu_{\La}(x) \lambda \otimes
\nu_{\Gamma}(y) \gamma ) \\
& = & \phi ( 1 \otimes 1 ) ( [ \nu_{\La}(x) \otimes
\nu_{\Gamma}(y) ][ \lambda \otimes \gamma ] )
\end{eqnarray*}
for all $\lambda \otimes \gamma$ and $x \otimes y$ in $\La \otimes
\Gamma$. This shows that $\nu_{\La} \otimes \nu_{\Gamma}$ is a
Nakayama automorphism for $\La \otimes \Gamma$.
\end{proof}

Combining Lemma \ref{frobeniusopposite} and Lemma \ref{frobeniustensor}, we see that the enveloping algebra of a Frobenius algebra is again Frobenius.

\begin{corollary}\label{frobeniusenveloping}
If $\La$ is a Frobenius algebra with a Nakayama automorphism
$\nu$, then $\Lae$ is Frobenius with $\nu \otimes \nu^{-1}$ as a
Nakayama automorphism.
\end{corollary}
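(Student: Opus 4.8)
The plan is to read the corollary off directly from the two preceding lemmas, using only the fact that $\Lae$ is \emph{by definition} the tensor product $\La \otimes_k \La^{\op}$. First I would invoke Lemma \ref{frobeniusopposite}: starting from a fixed isomorphism $\phi \colon \La \to D(\La)$ of left $\La$-modules with associated Nakayama automorphism $\nu$, that lemma produces an isomorphism $\phi^{\op} \colon \La^{\op} \to D(\La^{\op})$ of left $\La^{\op}$-modules whose associated Nakayama automorphism is $\nu^{-1}$; in particular $\La^{\op}$ is a Frobenius $k$-algebra.

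Next I would apply Lemma \ref{frobeniustensor} to the pair $\La$ and $\Gamma = \La^{\op}$, with Nakayama automorphisms $\nu_{\La} = \nu$ and $\nu_{\Gamma} = \nu^{-1}$. The lemma exhibits $\La \otimes_k \La^{\op} = \Lae$ as Frobenius, via the composite isomorphism of left $\Lae$-modules $\La \otimes_k \La^{\op} \to D(\La) \otimes_k D(\La^{\op}) \to D(\La \otimes_k \La^{\op})$, and identifies an explicit Nakayama automorphism of $\Lae$ as $\nu_{\La} \otimes \nu_{\Gamma} = \nu \otimes \nu^{-1}$. This is precisely the asserted statement, so the proof is complete once the two lemmas are chained together.

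There is essentially no obstacle; the corollary is a formal consequence of Lemmas \ref{frobeniusopposite} and \ref{frobeniustensor}. The only points deserving a word of care are bookkeeping ones: one must make sure that the automorphism $\nu^{-1}$ of $\La$ supplied by Lemma \ref{frobeniusopposite} is being read as an algebra automorphism of $\La^{\op}$ — which is legitimate, since $\La$ and $\La^{\op}$ have the same underlying $k$-module and $\nu^{-1}$ respects products in both — so that Lemma \ref{frobeniustensor} applies verbatim; and one should recall that a Nakayama automorphism is only determined up to an inner automorphism, so the assertion is that $\nu \otimes \nu^{-1}$ is \emph{a} Nakayama automorphism of $\Lae$, not the unique one.
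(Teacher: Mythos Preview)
Your proposal is correct and matches the paper's own argument exactly: the corollary is presented there as an immediate consequence of combining Lemma~\ref{frobeniusopposite} and Lemma~\ref{frobeniustensor}, with no further work needed. Your additional remarks on reading $\nu^{-1}$ as an automorphism of $\La^{\op}$ and on the Nakayama automorphism being unique only up to inner automorphisms are accurate and harmless elaborations.
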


The Tate-Hochschild duality results below for Frobenius algebras involve twisted modules. If $\La$ is an arbitrary ring and $\La \xrightarrow{f} \La$ is an automorphism, then we can endow a $\La$-module $M$ with a new $\La$-module structure as follows: for $\lambda \in \La$ and $m \in M$, let $\lambda \cdot m = f( \lambda )m$. We denote this twisted $\La$-module by ${_fM}$. If $B$ is a bimodule over $\La$, and $\La \xrightarrow{g} \La$ another automorphism, then we can twist on both sides and obtain a bimodule ${_fB}_g$. A special case is the bimodule ${{_f}\La}_g$, which is isomorphic to ${_{g^{-1}}\La}_{f^{-1}}$ when the two automorphisms $f$ and $g$ commute. In particular, the bimodules ${{_f}\La}_1$ and ${{_1}\La}_{f^{-1}}$ are isomorphic, and so are ${{_f}\La}_f$ and $\La$ itself. Note that the twisted module ${_fM}$ is isomorphic to ${{_f}\La}_1 \otimes_{\La} M$.

Suppose now, as before, that $\La$ is Frobenius with an isomorphism $\phi \colon \La \to D( \La )$ of left modules, and let $\nu$ be a corresponding Nakayama automorphism. Then $\phi$ is an isomorphism between the bimodules $_1\La_{\nu^{-1}}$ and $D(\La)$, and from above we see that $D(\La)$ is also isomorphic to ${_{\nu}\La}_1$. We can use this to show that the ring dual of a $\La$-module is just the $k$-dual twisted by $\nu$.

\begin{lemma}\label{ringdual}
If $\La$ is a Frobenius algebra with a Nakayama automorphism
$\nu$, then for any finitely generated left module $M$, the right modules $M^*$ and $D(M)_{\nu}$ are isomorphic.
\end{lemma}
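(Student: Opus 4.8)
The plan is to produce the isomorphism $M^* \cong D(M)_\nu$ of right $\La$-modules by chasing the functorial identifications recorded just above the statement, starting from the free case and then extending by exactness. First I would recall that $\La$ being Frobenius gives a fixed isomorphism $\phi \colon \La \to D(\La)$ of left modules, which (as observed right before the lemma) is simultaneously an isomorphism of bimodules ${_1\La_{\nu^{-1}}} \xrightarrow{\sim} D(\La)$; equivalently, precomposing appropriately, $D(\La) \cong {_\nu\La_1}$ as bimodules. The key point is that the right $\La$-module structure on $D(\La)$ is, under $\phi$, the structure on $\La$ where $\La$ acts on the right through $\nu$ (up to the inversion bookkeeping). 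I would phrase this as: $\La^* = \Hom_\La({_\La\La}, {_\La\La_\La}) \cong \La_\La$ as right modules canonically, while $D(\La)_\nu$ is $D(\La)$ with right action twisted by $\nu$; combining with $D(\La) \cong {_\nu\La_1}$ shows $D(\La)_\nu \cong {_\nu\La_\nu} \cong \La$, matching $\La^*$.

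Next I would upgrade this to finitely generated free modules $P = \La^{(r)}$ by taking direct sums: both $(-)^*$ and $D(-)_\nu$ are additive functors (the $k$-dual sends finite direct sums to finite direct sums, and twisting commutes with $\oplus$), so the isomorphism for $\La$ yields one for $P$, naturally in $P$ for maps of free modules. Then for a general finitely generated $M$, I would pick a finite free presentation $\La^{(s)} \xrightarrow{\pi} \La^{(r)} \to M \to 0$. Applying the left-exact contravariant functor $(-)^*$ gives an exact sequence $0 \to M^* \to (\La^{(r)})^* \to (\La^{(s)})^*$. Applying $D(-)_\nu$: the functor $D(-)$ is exact (it is a $k$-dual over a field, hence sends the right-exact $\La^{(s)} \to \La^{(r)} \to M \to 0$ to a left-exact $0 \to D(M) \to D(\La^{(r)}) \to D(\La^{(s)})$), and twisting by $\nu$ is exact, so $0 \to D(M)_\nu \to D(\La^{(r)})_\nu \to D(\La^{(s)})_\nu$ is exact. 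The naturality established in the free case gives a commuting ladder between these two exact sequences whose two right-hand vertical maps are isomorphisms, so the induced map on kernels $M^* \to D(M)_\nu$ is an isomorphism.

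The main obstacle, and the place requiring genuine care rather than formal nonsense, is pinning down the right-module structure in the base case: one must verify that the canonical right action of $\La$ on $\Hom_\La({_\La\La},{_\La\La_\La})$, transported through $\phi$ to $D(\La)$ and then compared with the action on $D(M)_\nu$ for $M = \La$, really does involve $\nu$ and not $\nu^{-1}$. This is exactly the kind of twist-bookkeeping the paragraph before the lemma was setting up ($\phi$ realizing $_1\La_{\nu^{-1}} \cong D(\La)$, and ${_f\La_1} \cong {_1\La_{f^{-1}}}$), so I expect the computation to come down to the defining identity $\phi(1)(\lambda x) = \phi(1)(\nu(x)\lambda)$ of the Nakayama automorphism applied once. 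Everything after that is the routine additivity-and-five-lemma argument sketched above; alternatively, one can bypass the presentation argument entirely by noting that for $M$ a summand of a free module the result follows directly from the free case by naturality, since every finitely generated projective suffices to handle $M^*$ only when $M$ is projective — so the presentation argument is the honest route for arbitrary finitely generated $M$, and I would use it.
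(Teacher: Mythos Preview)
Your proposal is correct, but the paper takes a more direct route. Rather than verifying the claim for free modules and extending via a presentation and a kernel comparison, the paper produces the isomorphism in one functorial chain using $\Hom$-tensor adjunction: from $M^* = \Hom_{\La}(M,\La) \cong \Hom_{\La}(M, D^2(\La)) = \Hom_{\La}(M, \Hom_k(D(\La),k))$ one adjoins to $\Hom_k(D(\La)\otimes_{\La} M, k)$, then substitutes the bimodule isomorphism $D(\La) \cong {_{\nu}\La_1}$ to get $D({_{\nu}M}) = D(M)_{\nu}$. Every step is a natural isomorphism of right $\La$-modules, so the naturality and right-action bookkeeping you correctly flag as the ``main obstacle'' in your approach is absorbed automatically into the adjunction formalism; no separate base case or five-lemma argument is needed, and finite generation of $M$ is not even used. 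Your method has the virtue of being hands-on and avoids invoking adjunction, but it front-loads exactly the twist computation that the paper's argument packages for free.
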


\begin{proof}
Standard $\Hom$-tensor adjunction gives
\begin{eqnarray*}
M^* & = & \Hom_{\La}(M, \La ) \\
& \cong & \Hom_{\La}(M, D^2( \La ) ) \\
& = & \Hom_{\La}(M, \Hom_k(D( \La ),k) ) \\
& \cong & \Hom_k( D( \La ) \otimes_{\La} M,k ) \\
& \cong & \Hom_k( {_{\nu}\La}_1 \otimes_{\La} M,k ) \\
& \cong & \Hom_k( {_{\nu}M},k ) \\
& = & D( {_{\nu}M} ).
\end{eqnarray*}
Since $D(M)_{\nu} = D( {_{\nu}M} )$, the result follows.
\end{proof}

Using this lemma, Theorem \ref{dualitygeneral} takes the following form for modules over a Frobenius algebra.

\begin{theorem}\label{dualitygeneralfrobenius}
Let $\La$ be a Frobenius algebra with a Nakayama automorphism $\nu$, $M,L$ two finitely generated left modules, and $N$ a finitely generated right module. Then there are isomorphisms of vector spaces
\begin{eqnarray*}
\Tatetor^{\La}_n(N,M) & \cong & \Tatetor^{\La}_{-(n+1)}( D(M)_{\nu}, D(N)) \\
\Tateext_{\La}^n(M,L) & \cong & \Tateext_{\La}^{-(n+1)}( L, {_{\nu}M} )
\end{eqnarray*}
for all $n \in \mathbb{Z}$.
\end{theorem}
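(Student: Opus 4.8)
The plan is to specialize Theorem~\ref{dualitygeneral} to the Frobenius case and then rewrite the ring duals it produces in terms of $k$-duals twisted by the Nakayama automorphism, using Lemma~\ref{ringdual}.

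First I would note that a Frobenius algebra is selfinjective, hence Gorenstein of Gorenstein dimension $d=0$. With $d=0$ the module $\Omega_{\La}^{0}(M)$ is, by definition, the image of the augmentation $P_{0}\to M$ in a minimal projective resolution, so $\Omega_{\La}^{0}(M)=M$. Substituting $d=0$ and $\Omega_{\La}^{0}(M)=M$ into Theorem~\ref{dualitygeneral} gives, for all $n\in\mathbb{Z}$,
\[
\Tatetor^{\La}_{n}(N,M)\cong\Tatetor^{\La}_{-(n+1)}(M^{*},D(N)),\qquad
\Tateext_{\La}^{n}(M,L)\cong\Tateext_{\La}^{-(n+1)}(L,D(M^{*})),
\]
where $(-)^{*}=\Hom_{\La}(-,\La)$.

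It then remains to identify $M^{*}$ and $D(M^{*})$ as twisted modules. For the homology statement I would invoke Lemma~\ref{ringdual} directly: it provides an isomorphism $M^{*}\cong D(M)_{\nu}$ of right $\La$-modules, and plugging this into the first isomorphism above yields $\Tatetor^{\La}_{n}(N,M)\cong\Tatetor^{\La}_{-(n+1)}(D(M)_{\nu},D(N))$. For the cohomology statement I would again start from $M^{*}\cong D(M)_{\nu}$ and use the identity $D(M)_{\nu}=D({_{\nu}M})$ (the same elementary observation about twists recorded in the proof of Lemma~\ref{ringdual}); applying $D$ gives $D(M^{*})\cong D^{2}({_{\nu}M})$. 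Since ${_{\nu}M}$ is finite dimensional over $k$, the canonical biduality map ${_{\nu}M}\to D^{2}({_{\nu}M})$ is an isomorphism of left $\La$-modules, so $D(M^{*})\cong{_{\nu}M}$; substituting into the second isomorphism above finishes the proof.

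Because every step is a substitution into results already in hand, I do not expect a genuine obstacle. The only places to be careful are bookkeeping ones: keeping straight on which side $\nu$ acts when passing between ${_{\nu}M}$, $D(M)_{\nu}$, and $D({_{\nu}M})$, and confirming that the biduality isomorphism ${_{\nu}M}\cong D^{2}({_{\nu}M})$ is genuinely $\La$-linear rather than merely $k$-linear — both of which are routine verifications.
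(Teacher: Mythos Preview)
Your proposal is correct and follows essentially the same route as the paper: specialize Theorem~\ref{dualitygeneral} to $d=0$ (so $\Omega_{\La}^{0}(M)=M$), then use Lemma~\ref{ringdual} to replace $M^{*}$ by $D(M)_{\nu}$ and $D(M^{*})$ by ${_{\nu}M}$ via the chain $D(M^{*})\cong D(D(M)_{\nu})={_{\nu}D^{2}(M)}\cong{_{\nu}M}$. The paper's own proof is just a terser version of exactly this.
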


\begin{proof}
The homology isomorphism is obtained directly by combining Theorem \ref{dualitygeneral} with Lemma \ref{ringdual}, and so does the cohomology isomorphism, when noting that there are isomorphisms
$$D( M^* ) \cong D \left ( D(M)_{\nu} \right ) = {_{\nu}D^2(M)} \cong {_{\nu}M}$$
of left $\La$-modules.
\end{proof}

Before applying this to Tate-Hochschild (co)homology, we include a result which shows the following: if one of the modules in a Tate homology group is twisted by an automorphism, then we may instead twist the other module by the inverse.

\begin{lemma}\label{homologytwist}
Let $\La$ be a ring with an automorphism $\La \xrightarrow{f} \La$, and $M$ and $N$ a right and a left $\La$-module, respectively. Then there is an isomorphism $\Tor^{\La}_n( M_f,N) \cong \Tor^{\La}_n( M, {_{f^{-1}}N})$ for every $n \ge 0$. If in addition $\La$ is a finite dimensional Gorenstein algebra, and $M$ is finitely generated, then there are isomorphisms $\Tatetor^{\La}_n( M_f,N) \cong \Tatetor^{\La}_n( M, {_{f^{-1}}N})$ for every $n \in \mathbb{Z}$.
\end{lemma}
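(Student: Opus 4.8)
The plan is to derive both statements from a single natural isomorphism of functors,
\[
M_f \otimes_{\La} N \;\cong\; M \otimes_{\La} {_{f^{-1}}N},
\]
natural in $M$ and $N$. This isomorphism is induced by $m \otimes n \mapsto m \otimes n$; the only thing to check is that it is well defined, and this amounts to the observation that in $M \otimes_{\La} {_{f^{-1}}N}$ one has $mf(\lambda) \otimes n = m \otimes f^{-1}(f(\lambda))n = m \otimes \lambda n$, which is precisely the defining relation of $M_f \otimes_{\La} N$. (Equivalently, it comes from the identity $M_f \cong M \otimes_{\La} {_1\La_f}$ — the right-module analogue of the isomorphism ${_fM} \cong {_f\La_1} \otimes_{\La} M$ recalled above — together with ${_1\La_f} \cong {_{f^{-1}}\La_1}$, the identity ${_{f^{-1}}\La_1} \otimes_{\La} N \cong {_{f^{-1}}N}$, and associativity of the tensor product.) I shall also use that twisting by $f$ is an exact functor which sends finitely generated projective right $\La$-modules to finitely generated projective right $\La$-modules; the latter holds because $\La_f$ is free of rank one as a right $\La$-module, being generated by $1$ (for $1 \cdot \lambda = f(\lambda)$, and $f$ is bijective).

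For the first assertion, I would take a projective resolution $\mathbb{P} \to M$ by right $\La$-modules. Since twisting by $f$ is exact and preserves projectivity, $\mathbb{P}_f \to M_f$ is again a projective resolution, whence
\[
\Tor^{\La}_n(M_f, N) = \Ho_n(\mathbb{P}_f \otimes_{\La} N) \cong \Ho_n(\mathbb{P} \otimes_{\La} {_{f^{-1}}N}) = \Tor^{\La}_n(M, {_{f^{-1}}N}),
\]
the middle isomorphism being the natural isomorphism above applied in each degree, which is a chain isomorphism by naturality.

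For the Tate part, assume in addition that $\La$ is finite dimensional Gorenstein and $M$ is finitely generated. Since $\La^{\op}$ is then also finite dimensional Gorenstein, $M$ admits a complete resolution $\mathbb{T}$ as a right $\La$-module, and I would show that $\mathbb{T}_f$ is a complete resolution of $M_f$: by the remarks above it is an acyclic complex of finitely generated projective right $\La$-modules; the chain map of property (2) in the definition of a complete resolution becomes, after applying $(-)_f$, a chain map $\mathbb{T}_f \to \mathbb{P}_f$ which is still bijective in high degrees, and $\mathbb{P}_f$ is a projective resolution of $M_f$ by the previous paragraph; and for property (1) one checks, using the adjunction $\Hom_{\La}(X \otimes_{\La} {_1\La_f}, \La) \cong \Hom_{\La}(X, \Hom_{\La}({_1\La_f}, \La))$ and the computation of $\Hom_{\La}({_1\La_f}, \La)$ as a twist of $\La$, that $(\mathbb{T}_f)^*$ is isomorphic to a twist of $\mathbb{T}^*$, hence acyclic. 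Granting this, and using that Tate homology can be computed from a complete resolution of the first argument \cite{ChristensenJorgensen}, one obtains
\[
\Tatetor^{\La}_n(M_f, N) = \Ho_n(\mathbb{T}_f \otimes_{\La} N) \cong \Ho_n(\mathbb{T} \otimes_{\La} {_{f^{-1}}N}) = \Tatetor^{\La}_n(M, {_{f^{-1}}N})
\]
for all $n \in \mathbb{Z}$, the middle isomorphism again being the natural isomorphism applied degreewise.

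I expect the main obstacle to be the verification that $\mathbb{T}_f$ is a complete resolution, specifically the bookkeeping needed to see that the $\La$-dual $(-)^* = \Hom_{\La}(-,\La)$ carries a complete resolution to (a twist of) a complete resolution under the right-twist $(-)_f$. Conceptually this is transparent: $(-)_f$ is an exact autoequivalence of the category of right $\La$-modules with exact quasi-inverse $(-)_{f^{-1}}$, and any such functor preserves complete resolutions; but carrying this out within the framework of \cite{AvramovMartsinkovsky} requires the explicit identifications indicated above. Everything else in the argument is formal.
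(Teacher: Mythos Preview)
Your proposal is correct and follows essentially the same route as the paper: both establish the natural isomorphism $M_f \otimes_{\La} N \cong M \otimes_{\La} {_{f^{-1}}N}$ via $m\otimes n \mapsto m\otimes n$, deduce the $\Tor$ statement by twisting a projective resolution, and then handle the Tate case by checking that $\mathbb{T}_f$ is a complete resolution of $M_f$. The only cosmetic difference is in verifying that $(\mathbb{T}_f)^*$ is acyclic: the paper uses the direct identification $\Hom_{\La}(X_f,\La)\cong\Hom_{\La}(X,{_1\La_{f^{-1}}})\cong\Hom_{\La}(X,{_f\La_1})\cong{_f(X^*)}$ given by $g\mapsto g$, which is a bit more streamlined than the tensor--hom adjunction you outline, but the content is the same.
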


\begin{proof}
For the first part, note that the map $M_f \otimes_{\La} N \to M \otimes_{\La} {_{f^{-1}}N}$ given by $m \otimes n \mapsto m \otimes n$ is well defined, and therefore an isomorphism. Moreover, if $P$ is a projective right $\La$-module, then so is $P_f$, and the twisting operation is exact. Therefore, if $\mathbb{P}$ is a projective resolution of $M$, then $\mathbb{P}_f$ is a projective resolution of $M_f$, giving
$$\Tor^{\La}_n( M_f,N) \cong \Ho_n \left ( \mathbb{P}_f \otimes _{\La} N \right ) \cong \Ho_n \left ( \mathbb{P} \otimes _{\La} {_{f^{-1}}N} \right ) \cong \Tor^{\La}_n( M, {_{f^{-1}}N}).$$
Suppose now that $\La$ is a finite dimensional Gorenstein algebra, and $M$ is finitely generated. If $\mathbb{T}$ is a complete resolution of $M$, then by definition there exists a projective resolution $\mathbb{P}$ of $M$ and a chain map $\mathbb{T} \xrightarrow{h} \mathbb{P}$ with the property that $h_n$ is bijective for $n \ge d$. Twisting by $f$, we see that $h$ is also a chain map $\mathbb{T}_f \xrightarrow{h} \mathbb{P}_f$. Moreover, we know that $\mathbb{P}_f$ is a projective resolution of $M_f$, and that the complex $\mathbb{T}_f$ is acyclic and consists of finitely generated projective modules. Now, if $X$ is an arbitrary right $\La$-module and $Y$ is a bimodule, then the map $\Hom_{\La}(X_f,Y) \to \Hom_{\La}(X,Y_{f^{-1}})$ given by $g \mapsto g$ is an isomorphism of left $\La$-modules. Therefore
$$( \mathbb{T}_f )^* = \Hom_{\La}( \mathbb{T}_f, \La ) \cong \Hom_{\La}( \mathbb{T}, {_1\La}_{f^{-1}} ) \cong \Hom_{\La} ( \mathbb{T}, {_f\La}_1 ) \cong {_f( \mathbb{T}^* )},$$
hence $( \mathbb{T}_f )^*$ is also acyclic. Consequently $\mathbb{T}_f$ is a complete resolution of $M_f$, giving
$$\Tatetor^{\La}_n( M_f,N) \cong \Ho_n \left ( \mathbb{T}_f \otimes _{\La} N \right ) \cong \Ho_n ( \mathbb{T} \otimes _{\La} {_{f^{-1}}N} ) \cong \Tatetor^{\La}_n( M, {_{f^{-1}}N}).$$
This completes the proof.
\end{proof}

We may now prove the Tate-Hochschild duality result for Frobenius algebras. The duality for the Tate-Hochschild homology $\TateHH_n( \La, \La )$ was proved by Eu and Schedler in \cite{EuSchedler}.

\begin{theorem}\label{dualityTHfrobenius}
If $\La$ is a Frobenius algebra with a Nakayama automorphism $\nu$, and $B$ a bimodule, then there are isomorphisms
\begin{eqnarray*}
\TateHH_n( \La, B )& \cong & \Tatetor^{\Lae}_{-(n+1)}( \La, {_{\nu^{-1}}D(B)}_1 ) \\
\TateHH^n( \La, B )& \cong & \Tateext_{\Lae}^{-(n+1)} ( B, {_{\nu^2}\La}_1 )
\end{eqnarray*}
for all $n \in \mathbb{Z}$. In particular, there are isomorphisms
\begin{eqnarray*}
\TateHH_n( \La, \La )& \cong & \TateHH_{-(n+1)}( \La, \La ) \\
\TateHH^n( \La, \La )& \cong & \TateHH^{-(n+1)} ( \La , {_{\nu^2}\La}_1 )
\end{eqnarray*}
for all $n \in \mathbb{Z}$.
\end{theorem}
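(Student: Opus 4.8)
The plan is to apply Theorem~\ref{dualitygeneralfrobenius} directly to the enveloping algebra. By Corollary~\ref{frobeniusenveloping}, $\Lae$ is a Frobenius algebra with Nakayama automorphism $\omega = \nu \otimes \nu^{-1}$; here $\La$ is a finitely generated (indeed cyclic) left $\Lae$-module, and $B$, being finite dimensional over $k$, is finitely generated as a left and as a right $\Lae$-module. So Theorem~\ref{dualitygeneralfrobenius}, applied over $\Lae$ with $M = L = \La$ and $N = B$, yields for every $n \in \mathbb{Z}$
\[
\TateHH_n(\La,B) = \Tatetor^{\Lae}_n(B,\La) \cong \Tatetor^{\Lae}_{-(n+1)}\bigl( D(\La)_{\omega},\, D(B) \bigr),
\qquad
\TateHH^n(\La,B) = \Tateext_{\Lae}^n(\La,B) \cong \Tateext_{\Lae}^{-(n+1)}\bigl( B,\, {_{\omega}\La} \bigr),
\]
where $D(\La)_{\omega}$ denotes the right $\Lae$-module $D(\La)$ twisted by $\omega$, and ${_{\omega}\La}$ the left $\Lae$-module $\La$ twisted by $\omega$. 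It then remains only to rewrite these twisted $\Lae$-modules as $\La$-$\La$-bimodules twisted on one or both sides by powers of $\nu$.

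The key translation is this: an automorphism of $\Lae = \La \otimes_k \La^{\op}$ of the form $f \otimes g$ (with $f$ acting on the $\La$-factor and $g$ on the $\La^{\op}$-factor) twists the left $\Lae$-module $\La$ into the bimodule ${_f\La}_g$, and together with the standard identities ${_f\La}_g \cong {_1\La}_{f^{-1}g} \cong {_{g^{-1}f}\La}_1$ (which follow from the isomorphisms recorded in the paragraph on twisted modules) this gives ${_{\omega}\La} = {_{\nu}\La}_{\nu^{-1}} \cong {_{\nu^2}\La}_1$. Hence $\TateHH^n(\La,B) \cong \Tateext_{\Lae}^{-(n+1)}(B, {_{\nu^2}\La}_1)$, which is the cohomology formula; reading the right-hand side for $B = \La$ as $\TateHH^{-(n+1)}(\La, {_{\nu^2}\La}_1)$ gives the special case.

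For the homology I first recall that $D(\La) \cong {_{\nu}\La}_1$ as a bimodule (as recalled just before Lemma~\ref{ringdual}). A computation with the dictionary above — taking into account that passing between left and right $\Lae$-modules interchanges the two sides of the bimodule — shows that $D(\La)_{\omega}$, as a right $\Lae$-module, is $\La$ twisted by the automorphism $\nu \otimes 1$; equivalently it is the bimodule ${_1\La}_{\nu}$. Now Lemma~\ref{homologytwist}, applied over $\Lae$ with the automorphism $\nu \otimes 1$, transfers the twist to the second variable:
\[
\Tatetor^{\Lae}_{-(n+1)}\bigl( \La_{\nu \otimes 1},\, D(B) \bigr) \cong \Tatetor^{\Lae}_{-(n+1)}\bigl( \La,\, {_{\nu^{-1} \otimes 1}D(B)} \bigr) = \Tatetor^{\Lae}_{-(n+1)}\bigl( \La,\, {_{\nu^{-1}}D(B)}_1 \bigr),
\]
which is the claimed homology formula. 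For $B = \La$ the bimodule ${_{\nu^{-1}}D(\La)}_1 \cong {_{\nu^{-1}}\bigl({_{\nu}\La}_1\bigr)}$ collapses to $\La$, so $\TateHH_n(\La,\La) \cong \Tatetor^{\Lae}_{-(n+1)}(\La,\La) = \TateHH_{-(n+1)}(\La,\La)$.

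The one delicate point — which I expect to be the main source of slips rather than a genuine difficulty — is exactly the bookkeeping just used: one must keep straight which tensor factor of $\Lae$ carries $\nu$ and which carries $\nu^{-1}$, and remember that the passage from left to right $\Lae$-modules swaps the two sides of a bimodule, since an erroneous inverse or swap would turn $\nu^2$ into $1$ or $\nu^{-2}$ in the final statements. With the dictionary pinned down correctly, every step above is a routine verification.
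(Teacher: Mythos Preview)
Your proof is correct and follows essentially the same approach as the paper's: apply Theorem~\ref{dualitygeneralfrobenius} over $\Lae$ (using Corollary~\ref{frobeniusenveloping} to identify the Nakayama automorphism as $\nu\otimes\nu^{-1}$), then simplify the resulting twisted $\Lae$-modules via $D(\La)\cong{_{\nu}\La}_1$ and Lemma~\ref{homologytwist}. Your explicit bookkeeping of the bimodule/$\Lae$-module dictionary and your remark that $B$ must be finitely generated (implicit in the paper's hypotheses via Theorem~\ref{dualitygeneralfrobenius}) are welcome clarifications, but the mathematical route is the same.
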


\begin{proof}
For the homology isomorphism, we use Theorem \ref{dualitygeneralfrobenius} together with Lemma \ref{homologytwist}:
\begin{eqnarray*}
\TateHH_n( \La, B ) & = & \Tatetor_n^{\Lae}( B, \La )\\
& \cong & \Tatetor_{-(n+1)}^{\Lae}( D( \La )_{\nu_{\Lae}}, D(B) ) \\
& \cong & \Tatetor_{-(n+1)}^{\Lae}( ( {_{\nu}\La}_1)_{( \nu \otimes \nu^{-1} )}, D(B) ) \\
& \cong & \Tatetor_{-(n+1)}^{\Lae}( \La_{( \nu \otimes 1 )}, D(B) ) \\
& \cong & \Tatetor_{-(n+1)}^{\Lae}( \La, {_{( \nu \otimes 1 )^{-1}}D(B)} ) \\
& = & \Tatetor_{-(n+1)}^{\Lae}( \La, {_{\nu^{-1}}D(B)}_1 ).
\end{eqnarray*}
For the cohomology isomorphism, we use Theorem \ref{dualitygeneralfrobenius} directly:
\begin{eqnarray*}
\TateHH^n( \La, B ) & = & \Tateext_{\Lae}^n( \La, B ) \\
& \cong & \Tateext_{\Lae}^{-(n+1)} ( B, {_{\nu_{\Lae}}\La} ) \\
& = & \Tateext_{\Lae}^{-(n+1)} ( B, {_{\nu \otimes \nu^{-1}}\La} ) \\
& = & \Tateext_{\Lae}^{-(n+1)} ( B, {_{\nu}\La}_{\nu^{-1}} ) \\
& \cong & \Tateext_{\Lae}^{-(n+1)} ( B, {_{\nu^2}\La}_1 )
\end{eqnarray*}
When $B= \La$, then the isomorphism $\TateHH^n( \La, \La ) \cong \TateHH^{-(n+1)} ( \La , {_{\nu^2}\La}_1 )$ follows directly, whereas the isomorphism $\TateHH_n( \La, \La ) \cong \TateHH_{-(n+1)}( \La, \La )$ follows from the fact that $D( \La ) \cong {_{\nu}\La}_1$.
\end{proof}

Of course, when the Nakayama automorphism squares to the identity, then the duality for Tate-Hochschild cohomology is as nice as for the homology. We end this section by recording this in the following corollary.

\begin{corollary}\label{nakayamasquare}
If $\La$ is a Frobenius algebra with a Nakayama automorphism $\nu$ such that $\nu^2 = 1$, then there is an isomorphism
$$\TateHH^n( \La, \La ) \cong  \TateHH^{-(n+1)} ( \La , \La )$$
for all $n \in \mathbb{Z}$.
\end{corollary}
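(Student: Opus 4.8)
The plan is to obtain this as an immediate specialization of Theorem~\ref{dualityTHfrobenius}. That theorem already supplies, for an arbitrary bimodule $B$, the isomorphism $\TateHH^n(\La,B) \cong \Tateext_{\Lae}^{-(n+1)}(B, {_{\nu^2}\La}_1)$, and in particular, taking $B = \La$, the isomorphism $\TateHH^n(\La,\La) \cong \TateHH^{-(n+1)}(\La, {_{\nu^2}\La}_1)$ for all $n \in \mathbb{Z}$. So the only thing left to do is to identify the coefficient bimodule ${_{\nu^2}\La}_1$ under the hypothesis $\nu^2 = 1$.

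Next I would observe that when $\nu^2$ equals the identity automorphism of $\La$, twisting on the left by $\nu^2$ does nothing: for $\lambda \in \La$ and $m \in \La$ one has $\lambda \cdot m = \nu^2(\lambda)m = \lambda m$, so ${_{\nu^2}\La}_1$ is literally the bimodule $\La$. (Equivalently, one may invoke the remark recorded just before Lemma~\ref{ringdual} that ${_f\La}_f \cong \La$ as bimodules for any automorphism $f$, applied with $f = \nu$: since $\nu^2 = 1$ forces $\nu = \nu^{-1}$, we get ${_{\nu^2}\La}_1 = {_{\nu}\La}_{\nu^{-1}} = {_{\nu}\La}_{\nu} \cong \La$.) Substituting this identification into the displayed isomorphism of Theorem~\ref{dualityTHfrobenius} yields $\TateHH^n(\La,\La) \cong \TateHH^{-(n+1)}(\La,\La)$, as claimed.

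There is essentially no obstacle here: the content is entirely carried by Theorem~\ref{dualityTHfrobenius}, whose proof in turn rests on Theorem~\ref{dualitygeneralfrobenius}, Lemma~\ref{homologytwist}, and Corollary~\ref{frobeniusenveloping}. The corollary itself is a one-line bookkeeping step that records the special behavior of the duality when the Nakayama automorphism is an involution — the case that covers, for instance, symmetric algebras (where $\nu = \id$) and exterior algebras.
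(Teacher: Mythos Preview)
Your proposal is correct and matches the paper's approach exactly: the paper states this corollary without proof, as it is an immediate specialization of Theorem~\ref{dualityTHfrobenius} once one notes that ${_{\nu^2}\La}_1 = \La$ when $\nu^2 = 1$. Your parenthetical alternative route via ${_{\nu}\La}_{\nu} \cong \La$ is also valid and consistent with the bimodule identifications recorded before Lemma~\ref{ringdual}.
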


\section{Quantum complete intersections}\label{quantumci}

Quantum complete intersections are noncommutative analogues of truncated polynomial rings, and are obtained by replacing the ordinary commutation relations between the generators by quantum versions. The terminology dates back to work by Avramov, Gasharov and Peeva, and, ultimately, Manin; the notion of \emph{quantum symmetric algebras} was introduced in \cite{Manin}, and that of \emph{quantum regular sequences} in \cite{AvramovGasharovPeeva}.

Fix a field $k$, let $c \ge 1$ be an integer, and let ${\bf{q}} =
(q_{ij})$ be a $c \times c$ commutation matrix with entries in $k$. That is, the diagonal entries $q_{ii}$ are all $1$, and $q_{ij}q_{ji}=1$ for all $i,j$. Furthermore, let ${\bf{a}}_c = (a_1, \dots, a_c)$ be an ordered sequence of $c$ integers with $a_i \ge 2$. The \emph{quantum complete intersection} $A_{\bf{q}}^{{\bf{a}}_c}$ determined by these data is the algebra
$$A_{\bf{q}}^{{\bf{a}}_c} \stackrel{\text{def}}{=} k \langle X_1, \dots, X_c \rangle / (X_i^{a_i}, X_iX_j-q_{ij}X_jX_i),$$
a finite dimensional algebra of dimension $\prod_{i=1}^c a_i$. The image of $X_i$ in this quotient will be denoted by $x_i$. Note that the class of all quantum complete intersections includes the exterior algebras
$$k \langle X_1, \dots, X_c \rangle / (X_i^2, X_iX_j+X_jX_i),$$
as well as finite dimensional commutative complete intersections of the form
$$k[X_1, \dots, X_c]/(X_1^{a_1}, \dots, X_c^{a_c}).$$
These two types of algebras are Frobenius, and the following result shows that this is the case with all quantum complete intersections.

\begin{lemma}\cite[Lemma 3.1]{Bergh}\label{nakayamaQCI}
A quantum complete intersection $A_{\bf{q}}^{{\bf{a}}_c}$ is
Frobenius, with an isomorphism $A_{\bf{q}}^{{\bf{a}}_c}
\xrightarrow{\phi} D(A_{\bf{q}}^{{\bf{a}}_c})$ of left modules and corresponding
Nakayama automorphism $A_{\bf{q}}^{{\bf{a}}_c}
\xrightarrow{\nu} A_{\bf{q}}^{{\bf{a}}_c}$ given by
\begin{eqnarray*}
\phi(1) \left ( \sum_{i_1, \dots, i_c} \alpha_{i_1, \dots, i_c} x_c^{i_c} \cdots x_1^{i_1} \right ) & = & \alpha_{a_1-1, \dots, a_c-1} \\
\nu (x_w) & = & \left ( \prod_{i=1}^c q_{iw}^{a_i-1} \right ) x_w
\end{eqnarray*}
for $1 \le w \le c$.
\end{lemma}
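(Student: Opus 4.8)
The plan is to pin down the $k$-linear structure of $A=A_{\bf{q}}^{{\bf{a}}_c}$, exhibit $\phi$ explicitly, and then read off $\nu$ from a direct computation with the commutation scalars.

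\emph{A monomial basis.} The first step is to record that the ordered monomials $x_c^{i_c}x_{c-1}^{i_{c-1}}\cdots x_1^{i_1}$ with $0\le i_w\le a_w-1$ form a $k$-basis of $A$: the relations $x_ix_j=q_{ij}x_jx_i$ let one rewrite any word in the generators as a scalar multiple of such an ordered monomial, the relations $x_i^{a_i}=0$ kill those with some exponent $\ge a_i$, and linear independence follows by a standard argument (the Diamond Lemma, or by realizing $A$ as an iterated twisted tensor product of the truncated polynomial algebras $k[X]/(X^{a_i})$). In particular $\dim_k A=\prod_i a_i$. The computational point to retain is that for two ordered monomials $m=x_c^{i_c}\cdots x_1^{i_1}$ and $n=x_c^{j_c}\cdots x_1^{j_1}$ one has $nm=\gamma_{m,n}\,x_c^{i_c+j_c}\cdots x_1^{i_1+j_1}$ for a \emph{nonzero} scalar $\gamma_{m,n}\in k$ (a product of powers of the $q_{ij}$), with the understanding that the product vanishes as soon as some $i_w+j_w\ge a_w$.

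\emph{$\phi$ is an isomorphism.} Let $\phi(1)\in D(A)$ be the functional returning the coefficient of the top monomial $x_c^{a_c-1}\cdots x_1^{a_1-1}$, and set $\phi(\lambda)=\lambda\cdot\phi(1)$, so that $\phi(\lambda)(\mu)=\phi(1)(\mu\lambda)$; this is automatically a homomorphism of left $A$-modules. Since $\dim_k A=\dim_k D(A)$, it is enough to prove $\phi$ injective. For an ordered monomial $m$ with exponents $(i_w)$, write $m^{\vee}$ for the monomial with exponents $(a_w-1-i_w)$. By the computation above, $\phi(m)(n)=\phi(1)(nm)$ is nonzero exactly when $n=m^{\vee}$; hence if $\lambda=\sum_m c_m m$ is nonzero and $c_{m_0}\ne 0$, then $\phi(\lambda)(m_0^{\vee})=c_{m_0}\phi(1)(m_0^{\vee}m_0)\ne 0$. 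So $\phi$ is an isomorphism and $A$ is Frobenius.

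\emph{The Nakayama automorphism.} Define $\nu$ on generators by $\nu(x_w)=\beta_w x_w$ with $\beta_w=\prod_{i=1}^c q_{iw}^{a_i-1}$. Each $\beta_w$ is a nonzero scalar and the assignment respects both families of relations ($\nu(x_w)^{a_w}=\beta_w^{a_w}x_w^{a_w}=0$ and $\nu(x_w)\nu(x_v)=q_{wv}\nu(x_v)\nu(x_w)$), so $\nu$ extends to a $k$-algebra automorphism of $A$ with inverse $x_w\mapsto\beta_w^{-1}x_w$. It remains to check the defining identity $\phi(1)(\lambda x)=\phi(1)(\nu(x)\lambda)$ for all $\lambda,x\in A$. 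I would reduce this as follows: if it holds, for all $\lambda$, for $x=x'$ and for $x=x''$, then --- $\nu$ being multiplicative --- it holds for $x=x'x''$ (apply the $x''$-case to $\lambda x'$, then the $x'$-case to $\nu(x'')\lambda$); combined with linearity in each variable, it suffices to treat $x$ a generator $x_w$ and $\lambda=m$ an ordered monomial. For such an $m$ with exponents $(i_w)$, reordering $mx_w$ into standard form introduces the scalar $\prod_{v<w}q_{vw}^{i_v}$, while reordering $\nu(x_w)m=\beta_w x_w m$ introduces $\beta_w\prod_{v>w}q_{wv}^{i_v}$; both give a scalar times $x_c^{i_c}\cdots x_w^{i_w+1}\cdots x_1^{i_1}$, which is the top monomial only when $i_w=a_w-2$ and $i_v=a_v-1$ for $v\ne w$. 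Equating the two scalars in that single case, and using $q_{wv}=q_{vw}^{-1}$ and $q_{ww}=1$, forces precisely $\beta_w=\prod_i q_{iw}^{a_i-1}$, which is how $\nu$ was chosen; the identity therefore holds for all $\lambda,x$.

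\emph{Where the work is.} No step is deep; the only genuine labor is the bookkeeping with the commutation scalars in the last step, together with fixing conventions carefully (the left-versus-right module structure on $D(A)$ and the order in which monomials are written) so that the stated formulas for $\phi(1)$ and for $\nu(x_w)$ --- and not, say, their inverses --- come out. I expect pinning down that the scalar is exactly $\prod_i q_{iw}^{a_i-1}$ to be the one place demanding real care.
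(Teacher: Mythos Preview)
Your argument is correct. The paper does not supply its own proof of this lemma; it merely quotes \cite[Lemma~3.1]{Bergh}. What you have written is exactly the standard direct verification one would expect in that reference: fix the ordered monomial basis, check that the top-coefficient functional makes $\phi$ injective (hence bijective by dimension count), and then compute the Nakayama scalar on each generator by comparing $\phi(1)(mx_w)$ with $\phi(1)(x_w m)$ for the unique monomial $m$ on which both sides are nonzero. Your bookkeeping with the commutation scalars is right: moving $x_w$ left past $x_v^{i_v}$ for $v<w$ contributes $q_{vw}^{i_v}$, moving it right past $x_v^{i_v}$ for $v>w$ contributes $q_{wv}^{i_v}=q_{vw}^{-i_v}$, and plugging in $i_v=a_v-1$ (for $v\ne w$) gives $\beta_w=\prod_{v\ne w}q_{vw}^{a_v-1}=\prod_{i=1}^c q_{iw}^{a_i-1}$ since $q_{ww}=1$. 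The reduction from arbitrary $x$ to generators via multiplicativity of $\nu$ is also fine. There is nothing to add.
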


Thus the Tate-Hochschild duality results from the previous section applies to quantum complete intersections, in particular, there are isomorphisms
\begin{eqnarray*}
\TateHH_n( A_{\bf{q}}^{{\bf{a}}_c}, A_{\bf{q}}^{{\bf{a}}_c} ) & \cong & \TateHH_{-(n+1)}( A_{\bf{q}}^{{\bf{a}}_c}, A_{\bf{q}}^{{\bf{a}}_c} ) \\
\TateHH^n( A_{\bf{q}}^{{\bf{a}}_c}, A_{\bf{q}}^{{\bf{a}}_c} ) & \cong & \TateHH^{-(n+1)} ( A_{\bf{q}}^{{\bf{a}}_c} , {_{\nu^2}(A_{\bf{q}}^{{\bf{a}}_c})}_1 )
\end{eqnarray*}
for all $n \in \mathbb{Z}$. As in Corollary \ref{nakayamasquare}, the quantum complete intersections whose Nakayama automorphisms square to the identity satisfies the same nice duality for Tate-Hochschild cohomology as for homology. In particular, this holds for the exterior algebras.

\begin{theorem}\label{exterioralgebras}
If $k$ is a field and $A$ an exterior algebra
$$A = k \langle X_1, \dots, X_c \rangle / (X_i^2, X_iX_j+X_jX_i),$$
then there are isomorphisms
\begin{eqnarray*}
\TateHH_n( A,A ) & \cong & \TateHH_{-(n+1)}( A,A ) \\
\TateHH^n( A,A ) & \cong & \TateHH^{-(n+1)} ( A,A )
\end{eqnarray*}
for all $n \in \mathbb{Z}$.
\end{theorem}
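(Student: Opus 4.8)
The plan is to identify the exterior algebra as a particular quantum complete intersection, read off its Nakayama automorphism from Lemma \ref{nakayamaQCI}, observe that this automorphism squares to the identity, and then quote the duality results already proved.

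First I would note that $A = k\langle X_1,\dots,X_c\rangle/(X_i^2, X_iX_j + X_jX_i)$ is the quantum complete intersection $A_{\bf q}^{{\bf a}_c}$ with ${\bf a}_c = (2,\dots,2)$ and commutation matrix ${\bf q} = (q_{ij})$ given by $q_{ii} = 1$ and $q_{ij} = -1$ for $i \neq j$ (a valid commutation matrix, since $q_{ii}=1$ and $q_{ij}q_{ji} = 1$ for all $i,j$). By Lemma \ref{nakayamaQCI}, $A$ is then Frobenius with a Nakayama automorphism $\nu$ satisfying $\nu(x_w) = \bigl(\prod_{i=1}^c q_{iw}^{a_i-1}\bigr)x_w$ for $1 \le w \le c$; separating the diagonal factor $q_{ww}^{a_w-1} = 1$ from the $c-1$ off-diagonal factors $q_{iw}^{a_i-1} = -1$ gives $\nu(x_w) = (-1)^{c-1}x_w$.

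Next I would compute $\nu^2$: since $\nu$ multiplies each generator $x_w$ by the same scalar $(-1)^{c-1}$, the automorphism $\nu^2$ multiplies $x_w$ by $(-1)^{2(c-1)} = 1$ and hence fixes all generators, so $\nu^2 = \id$. With this in hand the cohomology isomorphism $\TateHH^n(A,A) \cong \TateHH^{-(n+1)}(A,A)$ follows at once from Corollary \ref{nakayamasquare}, while the homology isomorphism $\TateHH_n(A,A) \cong \TateHH_{-(n+1)}(A,A)$ is simply the homology half of Theorem \ref{dualityTHfrobenius}, which needs no assumption on $\nu$ whatsoever. There is no real obstacle in this argument; the only place demanding a moment's care is the sign bookkeeping in the product $\prod_{i=1}^c q_{iw}^{a_i-1}$ defining $\nu(x_w)$, together with the harmless observation that in characteristic two one has $-1 = 1$, so that $A$ is in fact symmetric there, consistent with $\nu^2 = \id$.
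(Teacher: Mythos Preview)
Your argument is correct and matches the paper's own proof essentially step for step: both identify $A$ as the quantum complete intersection with $a_i=2$ and $q_{ij}=-1$ for $i\neq j$, invoke Lemma~\ref{nakayamaQCI} to get $\nu(x_w)=(-1)^{c-1}x_w$, note $\nu^2=\id$, and then cite Theorem~\ref{dualityTHfrobenius} for homology and Corollary~\ref{nakayamasquare} for cohomology. The paper is slightly terser but the content is the same.
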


\begin{proof}
Only the cohomology isomorphism needs explanation. By Lemma \ref{nakayamaQCI}, the Nakayama automorphism of $\La$ is the identity when $c$ is odd, and maps a generator $x_i$ to $-x_i$ when $c$ is even. In either case, the automorphism squares to the identity.
\end{proof}

We shall calculate the dimensions of the Tate-Hochschild (co)homology groups of all exterior algebras and certain commutative complete intersections. Moreover, we shall also find lower bounds for the dimensions of the homology groups of a general quantum complete intersection. In order to do this, we need an explicit description of a complete bimodule resolution of these algebras ``near zero". Let therefore $A$ denote a general quantum complete intersection $A_{\bf{q}}^{{\bf{a}}_c}$, and consider the element
$$s = \sum_{\substack{0 \le i_1 <a_1 \\ \vdots \\ 0 \le i_c <a_c}} \left ( \prod_{1 \le u<v \le c} q_{uv}^{-i_v(a_u-i_u-1)} \right ) x_c^{i_c} \cdots x_1^{i_1} \otimes x_c^{a_c-i_c-1} \cdots x_1^{a_1-i_1-1}$$
in the enveloping algebra $\Ae$. Furthermore, let $( \Ae )^c \xrightarrow{f} \Ae$ be the map obtained by multiplying an element of $( \Ae )^c$ by the $c \times 1$ matrix
$$(1 \otimes x_1-x_1 \otimes 1 \cdots 1 \otimes x_c-x_c \otimes 1)^T$$
from the right. We claim that the sequence
$$( \Ae )^c \xrightarrow{f} \Ae \xrightarrow{\cdot s} \Ae$$
of left $\Ae$-homomorphisms is exact. A direct (but tedious) computation shows that, for $1 \le t \le c$, the expression
\begin{eqnarray*}
& & \sum_{\substack{0 \le i_1 <a_1 \\ \vdots \\ 0 \le i_c <a_c}} \left ( \prod_{1 \le u<v \le c} q_{uv}^{-i_v(a_u-i_u-1)} \right ) x_c^{i_c} \cdots x_1^{i_1} \otimes x_c^{a_c-i_c-1} \cdots x_1^{a_1-i_1-1}x_t \\
& - & \sum_{\substack{0 \le i_1 <a_1 \\ \vdots \\ 0 \le i_c <a_c}} \left ( \prod_{1 \le u<v \le c} q_{uv}^{-i_v(a_u-i_u-1)} \right ) x_tx_c^{i_c} \cdots x_1^{i_1} \otimes x_c^{a_c-i_c-1} \cdots x_1^{a_1-i_1-1}
\end{eqnarray*}
is zero in $\Ae$. But this expression is the product $(1 \otimes x_t-x_t \otimes 1)s$, hence the sequence is a complex. Since the cokernel of the left map is $A$, it is enough to show that the dimension of the image of the right map is at least the dimension of $A$, namely $a_1a_2 \cdots a_c$. This is easy: the elements
$$(x_c^{j_c} \cdots x_1^{j_1} \otimes 1)s \hspace{1cm} 0 \le j_1 < a_1, \dots, 0 \le j_c < a_c$$
are linearly independent in $\Ae$.

Consequently the sequence is exact, and may therefore be considered as the part
$$P_1 \xrightarrow{d_1} P_0 \xrightarrow{d_0} P_{-1}$$
of a complete bimodule resolution of $A$. We shall use it to calculate $\TateHH_0(A, {_{\psi}A_1})$ for various twisted bimodules ${_{\psi}A_1}$, with the help of the following lemma.

\begin{lemma}\label{zeromaps}
Let $A=A_{\bf{q}}^{{\bf{a}}_c}$ be a quantum complete intersection, and $A \xrightarrow{\psi} A$ an automorphism given by $x_i \mapsto \alpha_i x_i$, where $\alpha_1, \dots, \alpha_c$ are nonzero scalars. Furthermore, let $e^c_w$ denote the $w$th standard generator in the $c$-fold direct sum ${_{\psi}A_1^c}$, and $\underline{\alpha}$ the element
$$(1+ \alpha_1 + \cdots + \alpha_1^{a_1-1})(1+ \alpha_2 + \cdots + \alpha_2^{a_2-1}) \cdots (1+ \alpha_c + \cdots + \alpha_c^{a_c-1}).$$
Then there is an isomorphism
$$\xymatrix{
{_{\psi}A_1} \otimes_{\Ae} ( \Ae )^c \ar[r]^{1 \otimes f} \ar[d]^{\wr} & {_{\psi}A_1} \otimes_{\Ae} \Ae \ar[r]^{1 \otimes ( \cdot s)} \ar[d]^{\wr} & {_{\psi}A_1} \otimes_{\Ae} \Ae \ar[d]^{\wr} \\
{_{\psi}A_1^c} \ar[r]^{d_1^{\psi}} & {_{\psi}A_1} \ar[r]^{d_0^{\psi}} & {_{\psi}A_1} }$$
of complexes, where the maps $d_i^{\psi}$ are given as follows:
\begin{eqnarray*}
d_1^{\psi} (x_c^{u_c} \cdots x_1^{u_1}e^c_w) & = & \left ( \alpha_w \prod_{i=w}^c q_{wi}^{u_i} - \prod_{j=1}^w q_{jw}^{u_j} \right ) (x_c^{u_c} \cdots x_w^{u_w+1} \cdots x_1^{u_1}) \\
d_0^{\psi} (x_c^{u_c} \cdots x_1^{u_1}) & = & \left \{
\begin{array}{ll}
0 & \text{if } u_i>0 \text{ for one } i \\
\underline{\alpha} x_c^{a_c-1} \cdots x_1^{a_1-1} & \text{if } u_1= \cdots =u_c =0.
\end{array} \right.
\end{eqnarray*}
\end{lemma}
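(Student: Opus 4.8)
The plan is to verify directly that the two squares in the displayed diagram commute, after which the vertical maps — all instances of the standard isomorphism ${_{\psi}A_1} \otimes_{\Ae} \Ae \cong {_{\psi}A_1}$ (respectively its $c$-fold sum) sending $m \otimes 1 \mapsto m$ — will give the asserted isomorphism of complexes. So the real content is the computation of the two induced maps $d_1^{\psi}$ and $d_0^{\psi}$. First I would fix the identification ${_{\psi}A_1} \otimes_{\Ae} \Ae \cong {_{\psi}A_1}$ explicitly: under it, $m \otimes (\lambda \otimes \lambda')$ corresponds to $\psi(\lambda) m \lambda'$, since the left action of $\lambda \otimes \lambda' \in \Ae$ on ${_{\psi}A_1}$ twists the left multiplication by $\psi$ and leaves the right multiplication untwisted. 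With this dictionary in hand, computing $d_0^{\psi}$ amounts to taking a monomial $x_c^{u_c}\cdots x_1^{u_1}$, multiplying it (on the right) by the element $s \in \Ae$, and then collapsing the tensor via the dictionary; computing $d_1^{\psi}$ means doing the same with the column matrix entries $1 \otimes x_w - x_w \otimes 1$.

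For $d_0^{\psi}$: the element $x_c^{u_c}\cdots x_1^{u_1}$, viewed in ${_{\psi}A_1} \otimes_{\Ae}\Ae$ as $(x_c^{u_c}\cdots x_1^{u_1})\otimes 1$, is sent by $1\otimes(\cdot s)$ to $(x_c^{u_c}\cdots x_1^{u_1}) \otimes s$; moving each tensor factor of $s$ of the form $x_c^{i_c}\cdots x_1^{i_1}\otimes x_c^{a_c-i_c-1}\cdots x_1^{a_1-i_1-1}$ across the tensor contributes $\psi(x_c^{i_c}\cdots x_1^{i_1}) = (\prod\alpha_t^{i_t})\, x_c^{i_c}\cdots x_1^{i_1}$ on the left factor. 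The left factor of the collapsed expression is then $\psi(x_c^{i_c}\cdots x_1^{i_1})\cdot x_c^{u_c}\cdots x_1^{u_1}\cdot x_c^{a_c-i_c-1}\cdots x_1^{a_1-i_1-1}$, which vanishes in $A$ unless every exponent stays below $a_t$; a monomial count shows the only surviving term is the top socle element $x_c^{a_c-1}\cdots x_1^{a_1-1}$, arising (for each $t$) from the index $i_t$ ranging over all admissible values when $u_1=\cdots=u_c=0$, and the $q$-coefficients in $s$ are designed precisely so that, after commuting everything into normal form, the leftover scalar is $\prod_t(1+\alpha_t+\cdots+\alpha_t^{a_t-1}) = \underline{\alpha}$. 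When some $u_i>0$ there is simply no way to avoid an exponent $\ge a_i$ somewhere, so $d_0^{\psi}=0$ there. The analogous but shorter computation for $d_1^{\psi}$: applying $1\otimes(\cdot(1\otimes x_w - x_w\otimes 1))$ to $(x_c^{u_c}\cdots x_1^{u_1})\otimes 1$ and collapsing gives $x_c^{u_c}\cdots x_1^{u_1}x_w - \psi(x_w)\,x_c^{u_c}\cdots x_1^{u_1} = x_c^{u_c}\cdots x_1^{u_1}x_w - \alpha_w x_w x_c^{u_c}\cdots x_1^{u_1}$; pushing $x_w$ past the factors $x_i^{u_i}$ on the appropriate side using the commutation relations $x_i x_w = q_{iw} x_w x_i$ picks up $\prod_{j=1}^{w}q_{jw}^{u_j}$ on the first term and $\prod_{i=w}^{c}q_{wi}^{u_i}$ on the second (together with the sign/order conventions in the normal form $x_c^{u_c}\cdots x_1^{u_1}$), yielding exactly the stated formula for $d_1^{\psi}$.

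Finally I would check commutativity of the two squares, but once the induced maps are identified as above this is immediate — the vertical maps are the canonical collapse isomorphisms, and the bottom horizontal maps were *defined* to be the maps they induce, so the squares commute by construction; the only thing that genuinely needs checking is that the bottom row is a complex, i.e. $d_0^{\psi}\circ d_1^{\psi}=0$, which follows from the fact that the top row is a complex (being obtained by applying the right-exact functor ${_{\psi}A_1}\otimes_{\Ae}-$ to the two-term complex $(\Ae)^c \xrightarrow{f}\Ae\xrightarrow{\cdot s}\Ae$, which was shown to be exact in the preceding paragraph). The main obstacle is purely bookkeeping: keeping the $q$-commutation coefficients and the chosen normal-form ordering $x_c^{i_c}\cdots x_1^{i_1}$ consistent throughout the $d_0^{\psi}$ computation, so that the surviving scalar really does telescope to $\underline{\alpha}$ rather than to some off-by-a-$q$-power expression; this is exactly the "direct but tedious computation" already flagged in the text, and I would organize it by first reducing to the single-variable case $c=1$ (where $s = \sum_{i=0}^{a-1} x^i\otimes x^{a-i-1}$ and the claim is transparent) and then handling the cross terms $q_{uv}^{-i_v(a_u-i_u-1)}$ by induction on $c$.
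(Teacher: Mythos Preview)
Your approach is essentially the paper's: identify ${_{\psi}A_1}\otimes_{\Ae}\Ae$ with ${_{\psi}A_1}$ via the canonical collapse and then compute the induced maps as the right $\Ae$-action of $1\otimes x_w-x_w\otimes 1$ and of $s$ on monomials. The paper does exactly this, observing that the total $x_i$-weight in $s$ is $a_i-1$ (so any $u_i>0$ kills the product) and then evaluating $1\cdot s$ directly rather than by induction on $c$.

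One slip to fix: your collapse convention $m\otimes(\lambda\otimes\lambda')\mapsto\psi(\lambda)m\lambda'$ is not the right $\Ae$-action induced from the left bimodule structure via the canonical anti-isomorphism $\Ae\cong(\Ae)^{\op}$; the correct right action is $m\cdot(\lambda\otimes\lambda')=\psi(\lambda')m\lambda$ (the second tensor factor lives in $A^{\op}$ and so acts on the twisted left). With the correct convention one gets $d_1^{\psi}(m\,e^c_w)=\psi(x_w)m-mx_w=\alpha_w x_w m - m x_w$, matching the paper, whereas your stated convention produces the negative of the claimed formula. This is only a bookkeeping sign, not a gap in the argument.
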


\begin{proof}
Clearly
$$d_1^{\psi} (x_c^{u_c} \cdots x_1^{u_1}e^c_w) = (x_c^{u_c} \cdots x_1^{u_1}) \cdot (1 \otimes x_w - x_w \otimes 1),$$
where the product means right scalar action on ${_{\psi}A_1}$ from $\Ae$. Therefore
\begin{eqnarray*}
d_1^{\psi} (x_c^{u_c} \cdots x_1^{u_1}e^c_w) & = & \psi (x_w) x_c^{u_c} \cdots x_1^{u_1} - x_c^{u_c} \cdots x_1^{u_1}x_w \\
& = & \alpha_w x_w x_c^{u_c} \cdots x_1^{u_1} - x_c^{u_c} \cdots x_1^{u_1}x_w \\
& = & \left ( \alpha_w \prod_{i=w}^c q_{wi}^{u_i} - \prod_{j=1}^w q_{jw}^{u_j} \right ) (x_c^{u_c} \cdots x_w^{u_w+1} \cdots x_1^{u_1}).
\end{eqnarray*}
As for $d_0^{\psi}$, this is just right multiplication with $s$. Since the total weight  of $x_i$ in $s$ is $a_i-1$, it is easy to see that $d_0^{\psi} (x_c^{u_c} \cdots x_1^{u_1})=0$ if $u_i \ge 1$. Thus only $d_0^{\psi}(1)$ remains:
\begin{eqnarray*}
d_0^{\psi}(1) & = & 1 \cdot s \\
& = & \sum_{\substack{0 \le i_1 <a_1 \\ \vdots \\ 0 \le i_c <a_c}} \left ( \prod_{1 \le u<v \le c} q_{uv}^{-i_v(a_u-i_u-1)} \right ) \psi ( x_c^{a_c-i_c-1} \cdots x_1^{a_1-i_1-1} ) x_c^{i_c} \cdots x_1^{i_1} \\
& = & \underline{\alpha} x_c^{a_c-1} \cdots x_1^{a_1-1}.
\end{eqnarray*}
\end{proof}

As a first application of Lemma \ref{zeromaps}, we calculate the Tate-Hochschild (co)homology groups of certain finite dimensional commutative complete intersections.

\begin{theorem}\label{ci}
Let $k$ be a field of characteristic $p$, and $A$ a finite dimensional commutative complete intersection of the form
$$A = k [ X_1, \dots, X_c ] / (X_1^{a}, \dots, x_c^{a}),$$
where $a \ge 2$.
Then
$$\dim \TateHH_n(A,A) = \left \{
\begin{array}{ll}
\binom{c+n-1}{n} a^c & \text{if } p \mid a \\
a^c-1 & \text{if } p \nmid a \text{ and } n=0 \\
\sum_{t=0}^c \binom{c}{t} \binom{n-1}{n-c+t}a^t(a-1)^{c-t} & \text{if } p \nmid a \text{ and } n \ge 1
\end{array}
\right.$$
for $n \ge 0$, and
$$\dim \TateHH_n(A,A) = \dim \TateHH^n(A,A) = \dim \TateHH_{-(n+1)}(A,A) = \dim \TateHH^{-(n+1)}(A,A)$$
for all $n \in \mathbb{Z}$.
\end{theorem}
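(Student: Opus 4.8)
The plan is to dispose of the positive and zero degrees directly and then read off the negative degrees from the symmetry that is already available. Since all $q_{ij}=1$, Lemma~\ref{nakayamaQCI} shows that the Nakayama automorphism of $A$ is the identity, so $A$ is a symmetric algebra; in particular $D(A)\cong A$ as bimodules. Feeding this into Lemma~\ref{dualityexttor} gives $\dim_k\TateHH^n(A,A)=\dim_k\TateHH_n(A,A)$ for all $n$, while Theorem~\ref{dualityTHfrobenius} gives $\TateHH_n(A,A)\cong\TateHH_{-(n+1)}(A,A)$ and Corollary~\ref{nakayamasquare} gives $\TateHH^n(A,A)\cong\TateHH^{-(n+1)}(A,A)$. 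Chaining these three identities shows that the four quantities $\dim_k\TateHH_n(A,A)$, $\dim_k\TateHH^n(A,A)$, $\dim_k\TateHH_{-(n+1)}(A,A)$, $\dim_k\TateHH^{-(n+1)}(A,A)$ coincide for every $n\in\mathbb Z$; in particular it suffices to compute $\dim_k\TateHH_n(A,A)$ for $n\ge 0$, and this will also establish the last displayed equality of the theorem.

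For $n=0$ I would apply Lemma~\ref{zeromaps} with $\psi=\id$, i.e.\ with all $\alpha_i=1$, to the portion $P_1\xrightarrow{d_1}P_0\xrightarrow{d_0}P_{-1}$ of the complete bimodule resolution of $A$ constructed just before the lemma. Then $\underline{\alpha}=a^c$, the map $d_1^{\id}$ is identically zero, and $d_0^{\id}$ kills every monomial with a positive exponent and sends $1$ to $a^c\,x_c^{a-1}\cdots x_1^{a-1}$. Hence $\TateHH_0(A,A)=\Ker d_0^{\id}$ equals $A$, of dimension $a^c=\binom{c-1}{0}a^c$, when $p\mid a$, and equals the span of the non-constant monomials, of dimension $a^c-1$, when $p\nmid a$.

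For $n\ge 1$ we have $\TateHH_n(A,A)=\HH_n(A,A)$, because $\Ae$ is selfinjective (Corollary~\ref{frobeniusenveloping}) and hence of Gorenstein dimension $0$. Writing $A=A_1\otimes_k\cdots\otimes_k A_c$ with $A_i=k[X_i]/(X_i^a)$, the Künneth formula for Hochschild homology over the field $k$ yields
$$\HH_n(A,A)\cong\bigoplus_{n_1+\cdots+n_c=n}\HH_{n_1}(A_1,A_1)\otimes_k\cdots\otimes_k\HH_{n_c}(A_c,A_c).$$
From the standard $2$-periodic minimal bimodule resolution of $A_i$ over $A_i^{\e}$, with alternating differentials $x_i\otimes 1-1\otimes x_i$ and $\sum_{j=0}^{a-1}x_i^j\otimes x_i^{a-1-j}$, one reads off that $\HH_0(A_i,A_i)=A_i$ has dimension $a$, while for $m\ge 1$ the group $\HH_m(A_i,A_i)$ is obtained from $A_i$ by multiplication by $a\,x_i^{a-1}$ (as a quotient or as a submodule, alternately). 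If $p\mid a$ these differentials vanish, so every $\HH_m(A_i,A_i)$ has dimension $a$ and the Künneth sum has dimension $\binom{c+n-1}{n}a^c$. If $p\nmid a$, then $\dim_k\HH_m(A_i,A_i)=a-1$ for all $m\ge 1$, and splitting the Künneth sum according to the number $t$ of indices $i$ with $n_i=0$ — there are $\binom{c}{t}$ such choices, $\binom{n-1}{c-t-1}=\binom{n-1}{n-c+t}$ ways to distribute a positive amount among the remaining $c-t$ indices, and each configuration contributes $a^t(a-1)^{c-t}$ — gives $\sum_{t=0}^c\binom{c}{t}\binom{n-1}{n-c+t}a^t(a-1)^{c-t}$.

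The main obstacle is the honest verification of the inputs to the $n\ge 1$ step: that the $2$-periodic complex really is a minimal bimodule resolution of $k[X]/(X^a)$ — which rests on the annihilator identities $\Ann(x\otimes1-1\otimes x)=\bigl(\sum_j x^j\otimes x^{a-1-j}\bigr)$ and conversely in the enveloping algebra — and that the Künneth isomorphism applies; both are classical over a field but must be cited or checked. The rest is the purely combinatorial re-indexing of the Künneth sum into the stated binomial form, where one must also keep track that for $n\ge 1$ the $t=c$ summand vanishes since $\binom{n-1}{n}=0$, and check that the $p\mid a$ formula $\binom{c+n-1}{n}a^c$ agrees at $n=0$ with the value $a^c$ computed from the complete resolution.
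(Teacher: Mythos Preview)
Your proposal is correct and follows essentially the same route as the paper: the duality reductions via Lemma~\ref{dualityexttor}, Theorem~\ref{dualityTHfrobenius}, and Corollary~\ref{nakayamasquare}; the $n=0$ case via Lemma~\ref{zeromaps}; and the $n\ge 1$ case via a K\"unneth decomposition into factors $k[X]/(X^a)$ followed by the same combinatorial count. The only minor difference is that the paper detours through cohomology for $n\ge 1$ (using $\TateHH^n(A,A)\cong\HH^n(A,A)$, citing \cite{Maclane} for K\"unneth and \cite{Holm} for the factor dimensions), whereas you stay on the homology side and compute $\dim\HH_m(k[X]/(X^a))$ directly from the $2$-periodic bimodule resolution; both yield the same numbers and the argument is otherwise identical.
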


\begin{proof}
Since $A$ is symmetric, it follows from Lemma \ref{dualityexttor} that the dimension of   $\TateHH^n(A,A)$ equals that of $\TateHH_n(A,A)$ for all $n \in \mathbb{Z}$. Together with Theorem \ref{dualityTHfrobenius}, this gives the three dimension equalities.

To calculate $\TateHH_0(A,A)$, we use Lemma \ref{zeromaps} with $\psi =1$. The map $d^1_1$ is clearly the zero map, hence $\dim \TateHH_0(A,A) = \dim \Ker d^1_0$. Since
$$d_0^1 (x_c^{u_c} \cdots x_1^{u_1}) = \left \{
\begin{array}{ll}
0 & \text{if } u_i>0 \text{ for one } i \\
a^c x_c^{a-1} \cdots x_1^{a-1} & \text{if } u_1= \cdots =u_c =0,
\end{array} \right.$$
we obtain
$$\dim \TateHH_0(A,A) = \left \{
\begin{array}{ll}
a^c -1 & \text{if } p \nmid a \\
a^c & \text{if } p \mid a.
\end{array} \right.$$

\sloppy To calculate $\TateHH_n(A,A)$ for $n \ge 1$, we use the fact that $\dim \TateHH^n(A,A) = \dim \TateHH_n(A,A)$, and that, for such $n$, there is an isomorphism $\TateHH^n(A,A) \cong \HH^n(A,A)$. Moreover, by \cite[Theorem X.7.4]{Maclane}, there is an isomorphism
$$\HH^n(A,A) = \bigoplus_{n_1+ \cdots +n_c =n} \left ( \HH^{n_1} \left ( k[X]/(X^{a}) \right ) \otimes_k \cdots \otimes_k \HH^{n_c} \left ( k[X]/(X^{a}) \right ) \right ),$$
hence
\begin{equation*}\label{formula}
\dim \HH^n(A,A) = \sum_{n_1+ \cdots +n_c =n} \left ( \prod_{i=1}^c \dim \HH^{n_i} \left ( k[X]/(X^{a}) \right ) \right ). \tag{$\dagger$}
\end{equation*}
By \cite[Proposition 2.2]{Holm}, the dimensions of the Hochschild cohomology groups of the truncated polynomial algebra $k[X]/(X^a)$ are given by
$$\dim \HH^n \left ( k[X]/(X^a) \right ) = \left \{
\begin{array}{ll}
a & \text{when } n=0 \\
a & \text{when } n>0 \text{ and } p \mid a \\
a-1 & \text{when } n>0 \text{ and } p \nmid a.
\end{array}
\right.$$
Therefore, when $p \mid a$, then
\begin{eqnarray*}
\dim \HH^n(A,A) &=& \sum_{\substack{n_1+ \cdots +n_c =n \\ n_i \ge 0}} a^c \\
&=& \binom{c+n-1}{n} a^c.
\end{eqnarray*}
When $p \nmid a$, then we have to keep track of how many times $\HH^0 \left ( k[X]/(X^{a}) \right )$ appears in each summand in the formula (\ref{formula}), since now $\dim \HH^0 \left ( k[X]/(X^{a}) \right ) =a$ whereas $\dim \HH^m \left ( k[X]/(X^{a}) \right ) =a-1$ for $m \ge 1$. If exactly $t$ out the numbers $n_1, \dots, n_c$ are zero, then the remaining $c-t$ are nonzero. The number of integer solutions to
$$x_1 + \cdots +x_{c-t} = n, \hspace{5mm} x_i \ge 1$$
is the same as the number of solutions to
$$y_1 + \cdots +y_{c-t} = n-c+t, \hspace{5mm} y_i \ge 0,$$
namely
$$\binom{n-1}{n-c+t}.$$
Therefore, in the formula (\ref{formula}), the total contribution from all the summands in which precisely $t$ out the numbers $n_1, \dots, n_c$ are zero, is
$$\binom{c}{t} \binom{n-1}{n-c+t}a^t(a-1)^{c-t}.$$
Summing up, we see that when $p \nmid a$, then
$$\dim \HH^n(A,A) = \sum_{t=0}^c \binom{c}{t} \binom{n-1}{n-c+t}a^t(a-1)^{c-t}.$$
\end{proof}

\begin{remark}\label{remarkCI}
Theorem \ref{ci} is probably well known to some, at least in terms of the ordinary Hochschild (co)homology. However, we were unable to find a reference. Note that the same method of proof also applies to general finite dimensional commutative complete intersections of the form
$$k [ X_1, \dots, X_c ] / (X_1^{a_1}, \dots, X_c^{a_c}).$$
However, the resulting formulas become much more complicated.
\end{remark}

Next, we calculate the Tate-Hochschild (co)homology groups of all exterior algebras.

\begin{theorem}\label{THexterioralgebras}
Let $k$ be a field of characteristic $p$, and $A$ an exterior algebra
$$A = k \langle X_1, \dots, X_c \rangle / (X_i^2, X_iX_j+X_jX_i).$$
Then
$$\dim \TateHH_n(A,A) = \left \{
\begin{array}{ll}
2^c \binom{c+n-1}{c-1} & \text{if } p =2 \\
2^c-2^{c-1} & \text{if } p \neq 2 \text{ and } n=0 \\
2^{c-1} \binom{c+n-1}{c-1} & \text{if } p \neq 2 \text{ and } n \ge 1
\end{array}
\right.$$
for $n \ge 0$, and
$$\dim \TateHH_n(A,A) = \dim \TateHH^n(A,A) = \dim \TateHH_{-(n+1)}(A,A) = \dim \TateHH^{-(n+1)}(A,A)$$
for all $n \in \mathbb{Z}$.
\end{theorem}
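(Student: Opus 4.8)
The plan is to run the argument of Theorem~\ref{ci} verbatim, with the combinatorial bookkeeping adapted to the exterior algebra. The exterior algebra $A = k\langle X_1,\dots,X_c\rangle/(X_i^2,X_iX_j+X_jX_i)$ is the quantum complete intersection $A^{\mathbf{a}_c}_{\mathbf{q}}$ with all $a_i=2$ and $q_{ij}=-1$ for $i\neq j$; by Lemma~\ref{nakayamaQCI} its Nakayama automorphism $\nu$ sends $x_i\mapsto(-1)^{c-1}x_i$, so $\nu^2=1$. Hence $A$ is a Frobenius algebra with $\nu^2=1$, and moreover (as noted before Theorem~\ref{exterioralgebras}) one checks the duality $\dim\TateHH^n(A,A)=\dim\TateHH_n(A,A)$ via Lemma~\ref{dualityexttor}: over any finite dimensional algebra this dimension equality holds when $A\cong {_{\nu}A_1}$ as a bimodule, which is exactly the content of $\nu^2=1$ combined with $D(A)\cong{_{\nu}A_1}$. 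Combining this with Theorem~\ref{dualityTHfrobenius} gives the four-fold chain of equalities
\[
\dim\TateHH_n(A,A)=\dim\TateHH^n(A,A)=\dim\TateHH_{-(n+1)}(A,A)=\dim\TateHH^{-(n+1)}(A,A)
\]
for all $n\in\mathbb{Z}$, so it remains only to compute $\dim\TateHH_n(A,A)$ for $n\ge0$.

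For $n=0$, I would apply Lemma~\ref{zeromaps} with $\psi=1$ (so all $\alpha_i=1$). The map $d_1^{1}$ is zero since $\alpha_w\prod q_{wi}^{u_i}-\prod q_{jw}^{u_j}$ with $\psi=1$ and the skew-commutation signs cancels—more carefully, for the exterior algebra $u_i\in\{0,1\}$ and one must check the relevant coefficient vanishes; in any case the homology at the left spot is all of ${_{1}A_1}$ or one computes $\Ker d_1^1$ directly. Then $\dim\TateHH_0(A,A)=\dim\Ker d_0^{1}$, where $d_0^1$ kills every monomial $x_c^{u_c}\cdots x_1^{u_1}$ with some $u_i\ge1$ and sends $1\mapsto\underline{\alpha}\,x_c^{a_c-1}\cdots x_1^{a_1-1}=2^c x_c\cdots x_1$. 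So if $p=2$ the whole space $A$ (dimension $2^c$) is the kernel, and if $p\neq2$ the kernel has dimension $2^c-1$. This matches the claimed $2^c$ when $p=2$; for $p\neq2$ the stated value is $2^c-2^{c-1}=2^{c-1}$, which does \emph{not} equal $2^c-1$ in general, so here I would instead invoke the equality $\dim\TateHH_0(A,A)=\dim\HH^0(A,A)=\dim Z(A)$ via the $n\ge1$ machinery extended to $n=0$ on the cohomology side, or more likely re-read the Künneth computation: $\dim\HH^0(k[X]/(X^2))=\dim Z(k[X]/(X^2))=2$ always, so the $n=0$ exterior case needs the refined count below with $n=0$, which gives $2^{c-1}\binom{c-1}{c-1}=2^{c-1}$ only when we subtract the center-versus-trace discrepancy—this is the subtle point and I would handle it exactly as in Theorem~\ref{ci}, where the $n=0$ row ($a^c-1$) comes from $\Ker d_0$, reconciling with the Künneth formula only for $n\ge1$.

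For $n\ge1$ I use $\dim\TateHH_n(A,A)=\dim\HH^n(A,A)$ (valid since the Gorenstein dimension of $A^e$ is $0$, so Tate and ordinary Hochschild cohomology agree in all positive degrees) together with the Künneth-type decomposition $\HH^n(A,A)\cong\bigoplus_{n_1+\cdots+n_c=n}\HH^{n_1}(k[X]/(X^2))\otimes_k\cdots\otimes_k\HH^{n_c}(k[X]/(X^2))$ from \cite[Theorem X.7.4]{Maclane} (this is legitimate because $A$ is the $c$-fold tensor power of $k[X]/(X^2)$ over $k$, or a skew version with trivial cohomological effect). By \cite[Proposition 2.2]{Holm} with $a=2$, one has $\dim\HH^m(k[X]/(X^2))=2$ for all $m\ge0$ when $p=2$, and $\dim\HH^0=2$, $\dim\HH^m=1$ for $m\ge1$ when $p\neq2$. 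When $p=2$ every tensor factor contributes $2$, and the number of compositions $n_1+\cdots+n_c=n$ with $n_i\ge0$ is $\binom{c+n-1}{c-1}$, giving $2^c\binom{c+n-1}{c-1}$. When $p\neq2$, each summand with exactly $t$ of the $n_i$ equal to zero contributes $2^t\cdot1^{c-t}=2^t$; but now there is a collapse not present in Theorem~\ref{ci}: since each nonzero factor has dimension exactly $1$, for fixed positions of the $t$ zeros the number of ways to distribute $n$ among the remaining $c-t$ strictly-positive slots is $\binom{n-1}{c-t-1}$, and summing $\sum_{t}\binom{c}{t}2^t\binom{n-1}{c-t-1}$ should telescope to $2^{c-1}\binom{c+n-1}{c-1}$ by a Vandermonde-type identity. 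I would verify this identity (the cleanest route is a generating-function argument: the coefficient of $z^n$ in $\bigl(\tfrac{2-z}{1-z}\bigr)^{?}$, or directly $\sum_t\binom{c}{t}2^t\binom{n-1}{c-t-1}=2^{c-1}\binom{c+n-1}{c-1}$ for $n\ge1$).

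\textbf{Main obstacle.} The genuine work is the binomial identity $\sum_{t=0}^{c}\binom{c}{t}2^t\binom{n-1}{c-t-1}=2^{c-1}\binom{c+n-1}{c-1}$ for $n\ge 1$ (with the $p\neq2$, $n=0$ case requiring the separate $\Ker d_0$ computation to be reconciled). Everything else is a direct transcription of the proof of Theorem~\ref{ci} with $a=2$, the duality inputs already in hand, and Holm's dimension formula.
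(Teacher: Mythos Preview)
Your proposal has two genuine gaps.

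First, the equality $\dim\TateHH^n(A,A)=\dim\TateHH_n(A,A)$ does \emph{not} follow from $\nu^2=1$. Lemma~\ref{dualityexttor} gives $\dim\TateHH^n(A,A)=\dim\TateHH_n(A,D(A))=\dim\TateHH_n(A,{_{\nu}A_1})$, and the condition for ${_{\nu}A_1}\cong A$ as bimodules is that $\nu$ be inner, not that $\nu^2=1$. When $c$ is even and $p\neq 2$ the exterior algebra is not symmetric, so this step fails. Relatedly, your assertion that $d_1^{1}=0$ is wrong: for the exterior algebra the coefficient in Lemma~\ref{zeromaps} becomes $(-1)^{u_{w+1}+\cdots+u_c}-(-1)^{u_1+\cdots+u_{w-1}}$, which is nonzero whenever the two parities differ. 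In fact $\Im d_1^{1}$ consists of those monomials whose total degree is positive and even, so $\dim\Im d_1^{1}=2^{c-1}-1$ and $\dim\TateHH_0(A,A)=(2^c-1)-(2^{c-1}-1)=2^{c-1}$, which is the stated value. The paper then computes $\TateHH^0(A,A)$ separately (via $\TateHH_0(A,{_{\nu}A_1})$ when $c$ is even) to obtain the homology--cohomology equality at $n=0$.

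Second, your K\"unneth argument for $n\ge 1$ does not apply: when $p\neq 2$ the exterior algebra is \emph{not} the tensor product $k[X]/(X^2)^{\otimes c}$, so Mac~Lane's theorem is unavailable, and the ``trivial cohomological effect'' you invoke is false. Concretely, for $c=2$, $n=2$, $p\neq 2$ the K\"unneth sum over $k[X]/(X^2)$ gives $2+1+2=5$, whereas the claimed (and correct) value is $2^{c-1}\binom{c+n-1}{c-1}=6$; your proposed identity $\sum_t\binom{c}{t}2^t\binom{n-1}{c-t-1}=2^{c-1}\binom{c+n-1}{c-1}$ therefore fails already here. The paper sidesteps all of this by citing \cite{XuHan} for the dimensions of $\HH_n(A,A)$ and $\HH^n(A,A)$ (and their equality) in positive degrees, reducing the proof to the degree-zero computations via Lemma~\ref{zeromaps}.
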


\begin{proof}
For positive $n$, the dimensions of $\TateHH_n(A,A)$ and $\TateHH^n(A,A)$ are given by \cite[Theorem 2 and Theorem 3]{XuHan}, and those results also show equality. In view of Theorem \ref{exterioralgebras}, we therefore only have to calculate $\TateHH_0(A,A)$ and $\TateHH^0(A,A)$.

First we calculate the dimension of $\TateHH_0(A,A)$. In the terminology of Lemma \ref{zeromaps}, we must calculate the homology of the complex
$$A^c \xrightarrow{d^1_1} A \xrightarrow{d^1_0} A,$$
with maps given by
\begin{eqnarray*}
d_1^1 (x_c^{u_c} \cdots x_1^{u_1}e^c_w) & = & \left ( (-1)^{u_{w+1}+ \cdots + u_c}- (-1)^{u_1+ \cdots + u_{w-1}} \right ) (x_c^{u_c} \cdots x_w^{u_w+1} \cdots x_1^{u_1}) \\
d_0^1 (x_c^{u_c} \cdots x_1^{u_1}) & = & \left \{
\begin{array}{ll}
0 & \text{if } u_i>0 \text{ for one } i \\
2^c x_c \cdots x_1 & \text{if } u_1= \cdots =u_c =0.
\end{array} \right.
\end{eqnarray*}
Suppose that $p \neq 2$. Then $x_c^{u_c} \cdots x_1^{u_1} \in \Im d^1_1$ if and only if $u_1 + \cdots + u_c$ is a positive even number, and so
$$\dim \Im d^1_1 = \binom{c}{2} + \binom{c}{4} + \cdots = 2^{c-1}-1.$$
Moreover, the dimension of $\Ker d^1_0$ is $2^c-1$. If $p=2$, then $d^1_1$ and $d^1_0$ are both zero, and consequently
$$\dim \TateHH_0(A,A) = \left \{
\begin{array}{ll}
2^c & \text{if } p =2 \\
2^c-2^{c-1} & \text{if } p \neq 2.
\end{array}
\right.$$

Next, we calculate the dimension of $\TateHH^0(A,A)$. If $c$ is odd or $p=2$, then $A$ is symmetric, and so $\dim \TateHH^0(A,A) = \dim \TateHH_0(A,A)$ in this case. Suppose therefore that $c$ is even and $p \neq 2$. By Lemma \ref{dualityexttor}, the dimension of $\dim \TateHH^0(A,A)$ equals that of $\dim \TateHH_0(A, {_{\nu}A_1})$, and the Nakayama automorphism $\nu$ maps $x_i$ to $-x_i$. Using Lemma \ref{zeromaps}, we must therefore calculate the homology of the complex
$${_{\nu}A^c_1} \xrightarrow{d^{\nu}_1} {_{\nu}A_1} \xrightarrow{d^{\nu}_0} {_{\nu}A_1},$$
with
$$d_1^{\nu} (x_c^{u_c} \cdots x_1^{u_1}e^c_w)  = - \left ( (-1)^{u_{w+1}+ \cdots + u_c}+ (-1)^{u_1+ \cdots + u_{w-1}} \right ) (x_c^{u_c} \cdots x_w^{u_w+1} \cdots x_1^{u_1})$$
and $d_0^{\nu} = 0$. We see that $x_c^{u_c} \cdots x_1^{u_1} \in \Im d^1_1$ if and only if $u_1 + \cdots + u_c$ is a positive odd number, and so
$$\dim \Im d^1_1 = \binom{c}{1} + \binom{c}{3} + \cdots = 2^{c-1}.$$
Consequently, the dimension of $\TateHH^0(A,A)$ equals that of $\TateHH_0(A,A)$ also in this case, namely $2^c-2^{c-1}$.
\end{proof}

The next result establishes lower bounds for the dimensions of the Tate-Hochschild homology groups of an arbitrary quantum complete intersection.

\begin{theorem}\label{lowerbound}
Let $k$ be a field of characteristic $p$, and
$$A_{\bf{q}}^{{\bf{a}}_c} = k \langle X_1, \dots, X_c \rangle / (X_i^{a_i}, X_iX_j-q_{ij}X_jX_i)$$
a quantum complete intersection with $a_i \ge 2$ for all $i$. Furthermore, suppose $p$ divides $d$ of the exponents $a_1, \dots, a_c$. Then
$$\dim \TateHH_n ( A_{\bf{q}}^{{\bf{a}}_c}, A_{\bf{q}}^{{\bf{a}}_c} ) \ge \left \{
\begin{array}{ll}
\sum_{i=1}^ca_i - c & \text{if } n=0,-1 \text{ and } p \nmid a_i \text{ for all } i \\
\sum_{i=1}^ca_i - c+1 & \text{if } n=0,-1 \text{ and } p \mid a_i \text{ for some } i \\
\sum_{i=1}^ca_i - c+ d & \text{if } n \neq 0,-1,
\end{array}
\right.$$
in particular $\TateHH_n ( A_{\bf{q}}^{{\bf{a}}_c}, A_{\bf{q}}^{{\bf{a}}_c} ) \neq 0$ for all $n \in \mathbb{Z}$.
\end{theorem}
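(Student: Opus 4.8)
\emph{Proof plan.} Write $A = A_{\bf{q}}^{{\bf{a}}_c}$. The plan is to treat the ranges $n = 0$, $n = -1$, $n \ge 1$ and $n \le -2$ separately, collapsing the negative ranges onto the non-negative ones via the self-duality $\TateHH_n(A,A) \cong \TateHH_{-(n+1)}(A,A)$ from Theorem \ref{dualityTHfrobenius} (applied with $B = A$). Since the asserted bound for $n \le -2$ equals that for $n \ge 1$, and the bound for $n = -1$ equals that for $n = 0$, it suffices to prove the bound for $n = 0$ and for $n \ge 1$.

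For $n = 0$ I would apply Lemma \ref{zeromaps} with $\psi = 1$, which presents $\TateHH_0(A,A)$ as the homology at the middle term of
\[
A^c \xrightarrow{\,d_1^1\,} A \xrightarrow{\,d_0^1\,} A .
\]
Here $d_1^1(x_c^{u_c} \cdots x_1^{u_1} e^c_w)$ is a scalar multiple of the monomial $x_c^{u_c} \cdots x_w^{u_w + 1} \cdots x_1^{u_1}$, read as $0$ once $u_w = a_w - 1$, while $d_0^1$ annihilates every monomial except $1$, sending $1$ to $\left( \prod_{i=1}^c a_i \right) x_c^{a_c - 1} \cdots x_1^{a_1 - 1}$. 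Thus $\Im d_1^1$ is the span of a set of monomials, and the point is that none of the $\sum_{w=1}^c (a_w - 1)$ ``pure power'' monomials $x_w^v$, $1 \le v \le a_w - 1$, occurs in that set: the only standard generator of $A^c$ that $d_1^1$ could carry onto a scalar multiple of $x_w^v$ is $x_w^{v-1} e^c_w$, and for this generator the relevant scalar evaluates to $1 - 1 = 0$ since all the occurring powers of $q$-entries are trivial. As each $x_w^v$ lies in $\Ker d_0^1$, the classes of these $\sum a_w - c$ monomials are linearly independent in $\TateHH_0(A,A)$. If moreover $p \mid a_i$ for some $i$, then $d_0^1(1) = 0$, so the monomial $1$ furnishes one more independent class, accounting for the extra $+1$.

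For $n \ge 1$ I would first use that $\Ae$ is selfinjective (Corollary \ref{frobeniusenveloping}) to identify $\TateHH_n(A,A)$ with the ordinary Hochschild homology $\HH_n(A,A)$, and then exploit the $\mathbb{Z}^c$-grading of $A$ with $\deg x_i$ the $i$th standard basis vector, which descends to the Hochschild complex and hence to $\HH_*(A,A)$. In a multidegree of the form $(0,\dots,0,m,0,\dots,0)$ with $m \ge 1$ in position $w$, every tensor factor of the bar complex is forced to be a power of $x_w$ alone, so that graded piece of the Hochschild complex of $A$ is literally the internal degree $m$ piece of the Hochschild complex of the one-variable algebra $R_w = k[x_w]/(x_w^{a_w})$; consequently $\HH_n(A,A)$ in that multidegree agrees with $\HH_n(R_w,R_w)$ in internal degree $m$. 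Summing over $w$ and over $m \ge 1$, which give pairwise distinct multidegrees, and using that for $n \ge 1$ all of $\HH_n(R_w,R_w)$ sits in strictly positive internal degree (clear from the resolution below, whose $n$th term for $n \ge 1$ is generated in internal degree $\ge 1$), I obtain $\dim \HH_n(A,A) \ge \sum_{w=1}^c \dim \HH_n(R_w,R_w)$. Finally the classical $2$-periodic minimal $R_w^{\e}$-resolution of $R_w$, with differentials $x_w \otimes 1 - 1 \otimes x_w$ and $\sum_i x_w^i \otimes x_w^{a_w - 1 - i}$, yields---directly, or via \cite{Holm} together with Lemma \ref{dualityexttor} for the symmetric algebra $R_w$---that $\dim \HH_n(R_w,R_w) = a_w - 1$ when $p \nmid a_w$ and $a_w$ when $p \mid a_w$, for every $n \ge 1$; summing gives $\sum a_w - c + d$.

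The nonvanishing is then automatic, since each of the three lower bounds is at least $\sum_{w=1}^c (a_w - 1) \ge c \ge 1$. I expect the $n = 0$ case to be the main obstacle: it requires working with the explicit partial bimodule resolution of $A$ and checking that the exhibited cycles remain independent modulo $\Im d_1^1$ for an \emph{arbitrary} commutation matrix ${\bf{q}}$, whereas the positive-degree bound becomes comparatively soft once the multigrading has reduced it to the well-understood one-variable algebras.
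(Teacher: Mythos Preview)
Your argument is correct. The reduction of the negative degrees to the non-negative ones via Theorem~\ref{dualityTHfrobenius}, and your treatment of $\TateHH_0(A,A)$ using Lemma~\ref{zeromaps} with $\psi=1$, match the paper's proof essentially verbatim: the paper observes that the image of $d_1^1$ is spanned by monomials and that the pure powers $x_w^u$ (and the identity) are missed, then compares with the rank of $d_0^1$ exactly as you do.

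Where you diverge is in the range $n\ge 1$. The paper does not argue this case at all; it simply invokes \cite[Proposition~4.9]{BerghMadsen}. Your route is genuinely different and more self-contained: you exploit the $\mathbb{Z}^c$-grading to isolate, inside the bar complex of $A$, the multidegree-$(0,\dots,m,\dots,0)$ strands, identify each with the internal-degree-$m$ piece of the Hochschild complex of the truncated polynomial ring $R_w=k[x_w]/(x_w^{a_w})$, and then read off the well-known dimensions $\dim\HH_n(R_w,R_w)=a_w-1$ or $a_w$ (according as $p\nmid a_w$ or $p\mid a_w$) to obtain $\sum a_w-c+d$. This is valid because the Hochschild differential preserves multidegree, all monomials in those strands are forced to be pure powers of $x_w$, the strands for different $w$ (or different $m\ge 1$) are disjoint, and for $n\ge 1$ the minimal $R_w^{\e}$-resolution of $R_w$ is generated in strictly positive internal degree so nothing is lost by restricting to $m\ge 1$. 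The upshot is that your proof avoids the external dependence on \cite{BerghMadsen}, at the modest cost of invoking the (classical) computation for $k[x]/(x^a)$; the paper's version is shorter but opaque without that reference.
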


\begin{proof}
Denote the algebra by $A$. It follows from \cite[Proposition 4.9]{BerghMadsen} that the dimension of $\TateHH_n(A,A)$ is at least $\sum_{i=1}^ca_i - c+ d$ when $n \ge 1$. Therefore, by Theorem \ref{dualityTHfrobenius}, we only need to establish the bound for the dimension of $\TateHH_0(A,A)$.

As before, we use Lemma \ref{zeromaps}: the space $\TateHH_0(A,A)$ is the homology of the complex
$$A^c \xrightarrow{d^1_1} A \xrightarrow{d^1_0} A,$$
with maps given by
\begin{eqnarray*}
d_1^1 (x_c^{u_c} \cdots x_1^{u_1}e^c_w) & = & \left ( \prod_{i=w}^c q_{wi}^{u_i} - \prod_{j=1}^w q_{jw}^{u_j} \right ) (x_c^{u_c} \cdots x_w^{u_w+1} \cdots x_1^{u_1}) \\
d_0^1 (x_c^{u_c} \cdots x_1^{u_1}) & = & \left \{
\begin{array}{ll}
0 & \text{if } u_i>0 \text{ for one } i \\
\left ( \prod_{i=1}^c a_i \right ) x_c^{a_c-1} \cdots x_1^{a_1-1} & \text{if } u_1= \cdots =u_c =0.
\end{array} \right.
\end{eqnarray*}
For any $1 \le w \le c$ and $1 \le u \le a_w-1$, the element $x_w^u$ in $A$ is not contained in the image of $d^1_1$, because $d^1_1(x_w^{u-1}e^c_w)=0$. Also, the identity in $A$ is not contained in this image, hence
$$\dim \Im d^1_1 \le \dim A - \sum_{i=1}^c (a_i-1) -1.$$
As for the dimension of $\Ker d^1_0$, this is $\dim A$ when $p$ divides one of the exponents $a_1, \dots, a_c$, and $\dim A-1$ if not. The lower bound for the dimension of $\TateHH_0(A,A)$ follows immediately from this.
\end{proof}

Of course, for some quantum complete intersections, the difference between the actual dimensions of the Tate-Hochschild homology groups and the lower bound given in Theorem \ref{lowerbound} can be arbitrarily large. For example, suppose that $A$ is either a finite dimensional commutative complete intersection of the form
$$k [ X_1, \dots, X_c ] / (X_1^{a}, \dots, x_c^{a}),$$
or an exterior algebra
$$k \langle X_1, \dots, X_c \rangle / (X_i^2, X_iX_j+X_jX_i).$$
Then Theorem \ref{ci} and Theorem \ref{THexterioralgebras} show that
$$\lim_{n \to \pm \infty} \dim \TateHH_n(A,A) = \infty,$$
in fact, when $n \ge 1$, then $\dim \TateHH_n(A,A)$ is given by a polynomial of degree $c-1$.

However, as the following result shows, there are quantum complete intersections where the lower bound given in Theorem \ref{lowerbound} is the actual dimension of the Tate-Hochschild homology groups.

\begin{theorem}\label{homologycodim2}
Let $k$ be a field of characteristic $p$, and
$$A = k \langle X,Y \rangle / (X^a, XY-qYX, Y^b)$$
a quantum complete intersection with $a,b\ge 2$ and $q$ not a root of unity in $k$. Then
$$\dim \TateHH_n(A,A) = \left \{
\begin{array}{ll}
a+b-2 & \text{if } n=0,-1 \text{ and } p \nmid a,b \\
a+b-1 & \text{if } n=0,-1 \text{ and } p \mid a \text { or } p \mid b \\
a+b-2 & \text{if } n \neq 0,-1 \text{ and } p \nmid a,b \\
a+b-1 & \text{if } n \neq 0,-1 \text{ and either } p \mid a \text { or } p \mid b \\
a+b & \text{if } n \neq 0,-1 \text{ and } p \mid a,b.
\end{array}
\right.$$
\end{theorem}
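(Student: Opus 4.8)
The strategy is to prove that the lower bounds of Theorem~\ref{lowerbound} are sharp in this situation. Write $A=k\langle X,Y\rangle/(X^a,XY-qYX,Y^b)$, let $x,y$ be the images of $X,Y$ (so the monomials $y^{v}x^{u}$ with $0\le u<a$, $0\le v<b$ form a $k$-basis of $A$), and let $d\in\{0,1,2\}$ be the number of the exponents $a,b$ divisible by $p$. The homology duality $\TateHH_n(A,A)\cong\TateHH_{-(n+1)}(A,A)$ of Theorem~\ref{dualityTHfrobenius} is an involution on $\mathbb{Z}$ interchanging $0$ with $-1$ and $n$ with $-(n+1)$ for $n\ge1$, so it suffices to determine $\dim\TateHH_n(A,A)$ for $n\ge0$: degree $0$ then also settles degree $-1$, and the positive degrees settle the degrees $\le-2$. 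Thus the five cases of the statement collapse to two.

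For degree $0$ I would use Lemma~\ref{zeromaps} with $\psi=\id$, which identifies $\TateHH_0(A,A)$ with the homology at the middle term of the complex $A^{2}\xrightarrow{d_1^{1}}A\xrightarrow{d_0^{1}}A$, where (writing $e_1,e_2$ for the standard generators of $A^2$ as in that lemma) $d_1^{1}(y^{v}x^{u}e_1)=(q^{v}-1)\,y^{v}x^{u+1}$ and $d_1^{1}(y^{v}x^{u}e_2)=(1-q^{u})\,y^{v+1}x^{u}$, while $d_0^{1}$ sends $1$ to $ab\cdot y^{b-1}x^{a-1}$ and every other monomial to $0$. Because $q$ is not a root of unity, $q^{v}-1=0$ exactly for $v=0$ and $1-q^{u}=0$ exactly for $u=0$; hence $\Im d_1^{1}$ is precisely the span of the monomials $y^{v}x^{u}$ with $1\le u\le a-1$, $1\le v\le b-1$, of dimension $(a-1)(b-1)$ (this is the commutator subspace $[A,A]$). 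On the other hand $\Ker d_0^{1}=A$ exactly when $ab=0$ in $k$, i.e.\ when $p\mid a$ or $p\mid b$, and has codimension one in $A$ otherwise. Therefore $\dim\TateHH_0(A,A)=ab-(a-1)(b-1)-\varepsilon=a+b-1-\varepsilon$, with $\varepsilon=1$ if $p\nmid a,b$ and $\varepsilon=0$ if $p\mid a$ or $p\mid b$, which is exactly the value asserted in degrees $0$ and $-1$.

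For $n\ge1$ the enveloping algebra $\Ae$ is selfinjective, hence of Gorenstein dimension zero, so $\TateHH_n(A,A)=\HH_n(A,A)$. The inequality $\dim\HH_n(A,A)\ge a+b-2+d$ is already provided by Theorem~\ref{lowerbound} (which rests on \cite[Proposition~4.9]{BerghMadsen}), so only the reverse inequality remains. I would obtain it by computing $\HH_n(A,A)=\Ho_n(\mathbb{P}\otimes_{\Ae}A)$ from an explicit minimal projective bimodule resolution $\mathbb{P}$ of $A$ over $\Ae$; the three-term complex above is the degree $1,0,-1$ part of such a resolution, and for a codimension-two quantum complete intersection the full resolution can be written down. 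The scalars entering its differentials are products of powers of $q$ with ``quantum integers'' $1+\lambda+\cdots+\lambda^{m-1}$ for $\lambda$ a nontrivial power of $q$; since $q$ is not a root of unity, $\lambda^{m}=1$ forces $\lambda=1$, so none of these quantum integers vanishes and the differentials of $\mathbb{P}\otimes_{\Ae}A$ have the largest possible rank. A rank count then shows that the ``cross'' classes --- those responsible, for the commutative complete intersections of Theorem~\ref{ci}, for the polynomial growth of $\HH_n$ --- are all annihilated, leaving $\dim\HH_n(A,A)=a+b-2+d$. (Equivalently, for $n\ge1$ one expects a decomposition $\HH_n(A,A)\cong\HH_n(k[x]/(x^{a}))\oplus\HH_n(k[y]/(y^{b}))$; since $\dim\HH_n(k[x]/(x^{a}))$ is $a-1$ when $p\nmid a$ and $a$ when $p\mid a$, and likewise for $b$, this again gives $a+b-2+d$.) Feeding these values back through the duality of the first paragraph yields all five cases.

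The main obstacle is the upper bound in positive degrees. The Betti numbers of the minimal bimodule resolution of $A$ grow with $n$, so there is no reduction to a bounded complex; the real point is to check that ``$q$ not a root of unity'' forces every coefficient that could a priori produce a cancellation to be nonzero, so that the homology of $\mathbb{P}\otimes_{\Ae}A$ collapses onto the Theorem~\ref{lowerbound} bound instead of growing. This non-vanishing of quantum integers is precisely what separates the generic quantum case from the commutative complete intersections and exterior algebras of Theorems~\ref{ci} and~\ref{THexterioralgebras}, where the corresponding coefficients do vanish and the Tate-Hochschild homology grows polynomially in $n$.
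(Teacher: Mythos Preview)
Your degree-$0$ computation via Lemma~\ref{zeromaps} is exactly what the paper does (with $u$ and $v$ interchanged in the notation), and your use of the duality from Theorem~\ref{dualityTHfrobenius} to reduce to $n\ge 0$ is likewise identical to the paper's reduction.

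For $n\ge 1$ the paper does not compute anything: it simply invokes \cite[Theorem~3.1]{BerghErdmann}, which already records $\dim\HH_n(A,A)$ for all $n\ge 1$ when $q$ is not a root of unity. Your proposal is in effect to reprove that theorem from scratch by writing down the full minimal bimodule resolution and doing the rank count; that is exactly the computation carried out in \cite{BerghErdmann}, so the route is sound, but you have not actually performed it here --- you only assert that the non-vanishing of the quantum integers forces maximal rank. That assertion is correct, but verifying it requires tracking the explicit differentials in every degree (as in the eight families of scalars $K_1,\dots,K_8$ displayed just before Proposition~\ref{negativecohomologycodim2}), and this bookkeeping is the entire content of the result you are citing from Theorem~\ref{lowerbound} only as a lower bound. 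So as written your argument for $n\ge 1$ is a sketch rather than a proof; either carry out the rank count or, as the paper does, cite \cite[Theorem~3.1]{BerghErdmann}.

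A word of caution on your parenthetical: the ``expected'' decomposition $\HH_n(A,A)\cong\HH_n(k[x]/(x^a))\oplus\HH_n(k[y]/(y^b))$ is a numerical coincidence here, not an instance of a K\"unneth-type theorem for quantum complete intersections. The dimensions agree, but no such isomorphism is established in the paper or in \cite{BerghErdmann}, and invoking it as an alternative argument would itself require proof.
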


\begin{proof}
The dimensions of $\TateHH_n(A,A)$ for $n \ge 1$ follow from \cite[Theorem 3.1]{BerghErdmann}, and so by Theorem \ref{dualityTHfrobenius}, we only need to calculate the dimension of $\TateHH_0(A,A)$. By Lemma \ref{zeromaps}, this homology group is the homology of the complex
$$A^2 \xrightarrow{d_1^1} A \xrightarrow{d_0^1} A,$$
with maps given by
\begin{eqnarray*}
d_1^1 (y^u x^ve^2_1) & = & (q^u-1)y^ux^{v+1} \\
d_1^1 (y^u x^ve^2_2) & = & (1-q^v)y^{u+1}x^v \\
d_0^1 (y^u x^v) & = & \left \{
\begin{array}{ll}
0 & \text{if } u>0 \text{ or } v>0 \\
ab y^{b-1}x^{a-1} & \text{if } u=v=0.
\end{array} \right.
\end{eqnarray*}
It is easy to see that an element $y^u x^v \in A$ belongs to $\Im d^1_1$ if and only if  both $u$ and $v$ are positive: for then $d_1^1 (y^{u-1} x^ve^2_2) = (1-q^v)y^ux^v$, and  $1-q^v$ is nonzero since $q$ is not a root of unity. This shows that $\dim \Im d^1_1 = (a-1)(b-1) = ab-a-b+1$. The dimension of $\Ker d^1_0$ is $ab-1$ if $p$ does not divide any of $a,b$, and $ab$ if not. The dimension of $\TateHH_0(A,A)$ follows from this.
\end{proof}

When it comes to cohomology, the situation is totally different than for homology. On the one hand, if $A$ is either a finite dimensional commutative complete intersection, or an exterior algebra, then from Theorem \ref{ci} and Theorem \ref{THexterioralgebras} we see that $\TateHH^n(A,A)$ is nonzero for all $n \in \mathbb{Z}$. In fact, just as for homology, when $n \ge 1$, then $\dim \TateHH^n(A,A)$ is given by a polynomial of degree $c-1$ (where $c$ is the number of defining generators for $A$). On the other hand, if $A$ is as in Theorem \ref{homologycodim2}, then it follows from \cite[Theorem 3.2]{BerghErdmann} that $\TateHH^n(A,A) =0$ when $n \ge 3$. Consequently, there is no cohomological counterpart to Theorem \ref{lowerbound}: there is no universal lower bound for the dimensions of the Tate-Hochschild cohomology groups of quantum complete intersections.

Our final main result in this paper is the cohomological version of Theorem \ref{homologycodim2}; we shall determine the dimensions of all the Tate-Hochschild cohomology groups of the quantum complete intersection
$$A = k \langle X,Y \rangle / (X^a, XY-qYX, Y^b)$$
when $q$ is not a root of unity in $k$. To do this, we first calculate the dimension of the (ordinary) Hochschild homology group $\HH_n(A, {_{\nu^{-1}}A_1})$ for $n \ge 1$.
By Lemma \ref{nakayamaQCI}, the automorphism $\nu^{-1}$ is given by $x \mapsto q^{b-1}x, y \mapsto q^{1-a}y$.

For parameters $t,i,u,v$, all non-negative integers, define the following eight scalars:
\begin{align*}
K_1(t,i,u,v) & =  q^{a+b-ab-1} \sum_{j=0}^{b-1} q^{j \left ( a+ \frac{ai}{2}+v-1 \right ) } & i \text{ even }, i \le 2t \\
K_2(t,i,u,v) & =  \sum_{j=0}^{a-1} q^{j \left ( bt+b- \frac{bi}{2} +u-1 \right ) } & i \text{ even }, i \le 2t \\
K_3(t,i,u,v) & =  q^{\frac{ai-a+2+2v}{2}} - q^{1-a} & i \text{ odd }, i \le 2t-1 \\
K_4(t,i,u,v) & =  q^{\frac{2bt-bi+b+2u}{2}} -1 & i \text{ odd }, i \le 2t-1 \\
K_5(t,i,u,v) & =  q^{1-a} - q^{\frac{ai+2v}{2}}  & i \text{ even }, i \le 2t \\
K_6(t,i,u,v) & =  \sum_{j=0}^{a-1} q^{j \left ( bt+b- \frac{bi}{2} +u \right ) } & i \text{ even }, i \le 2t \\
K_7(t,i,u,v) & =  q^{a+b-ab-1} \sum_{j=0}^{b-1} q^{j \left ( a+ \frac{a(i-1)}{2}+v \right ) } & i \text{ odd }, i \le 2t+1 \\
K_8(t,i,u,v) & =  q^{\frac{2bt-bi+3b+2u-2}{2}} - 1  & i \text{ odd }, i \le 2t+1.
\end{align*}
Note that since $q$ is not a root of unity and $a,b \ge 2$, all these scalars are nonzero in $k$. Next, for each integer $n \ge 0$, denote by $\oplus_{i=0}^n A e^n_i$ the vector space consisting of $n+1$ copies of $A$. Finally, for each $n \ge 1$, define a map
$$\oplus_{i=0}^n Ae^n_i \xrightarrow{\delta_n} \oplus_{i=0}^{n-1} Ae^{n-1}_i$$
by
$$
\begin{array}{l}
\delta_{2t} \colon  y^ux^v e^{2t}_i \mapsto \vspace{2mm} \\
\left \{
\begin{array}{ll}
K_1(t,i,u,v)y^{u+b-1}x^ve^{2t-1}_i +
K_2(t,i,u,v)y^ux^{v+a-1}e^{2t-1}_{i-1}, & \text{ for $i$
even}
\\
\\
K_3(t,i,u,v)y^{u+1}x^ve^{2t-1}_i + K_4(t,i,u,v)y^ux^{v+1}e^{2t-1}_{i-1}, & \text{
  for $i$ odd}
\end{array} \right. \\
\\
\delta_{2t+1} \colon y^ux^v e^{2t+1}_i \mapsto \vspace{2mm} \\
\left \{
\begin{array}{ll}
K_5(t,i,u,v)y^{u+1} x^ v e^{2t}_i +
K_6(t,i,u,v) y^u x^{v+a-1} e^{2t}_{i-1}, & \text{ for $i$
even}
\\
\\
K_7(t,i,u,v) y^{u+b-1} x^v e^{2t}_i + K_8(t,i,u,v)y^u x^{v+1} e^{2t}_{i-1}, &
\text{ for $i$ odd,}
\end{array} \right.
\end{array}
$$
where we use the convention $e^n_{-1} = e^n_{n+1} =0$. With this notation, it follows from \cite[page 510-511]{BerghErdmann} that $\HH_n(A, {_{\nu^{-1}}A_1})$ is the homology of the complex
$$\cdots \to \oplus_{i=0}^{n+1} Ae^{n+1}_i \xrightarrow{\delta_{n+1}}
\oplus_{i=0}^n Ae^n_i \xrightarrow{\delta_n} \oplus_{i=0}^{n-1} Ae^{n-1}_i \to \cdots$$
of $k$-vector spaces.

\begin{proposition}\label{negativecohomologycodim2}
Let $k$ be a field and
$$A = k \langle X,Y \rangle / (X^a, XY-qYX, Y^b)$$
a quantum complete intersection with $a,b\ge 2$ and $q$ not a root of unity in $k$. Then $\HH_n(A, {_{\nu^{-1}}A_1})=0$ for $n \ge 1$.
\end{proposition}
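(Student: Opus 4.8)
The plan is to compute directly the homology of the explicit complex $C_\bullet$ recalled just before the statement, where $C_n=\bigoplus_{i=0}^n Ae^n_i$ and the differential in degree $n$ is $\delta_n$; we must show $H_n(C_\bullet)=0$ for $n\ge 1$. I would exhibit $C_\bullet$ as the total complex of a first-quadrant double complex and then take homology one direction at a time.

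Reindex the generators by setting $E_{p,q}=A$ and identifying $e^n_i$ with the copy $E_{i,\,n-i}$, so that $C_n=\bigoplus_{p+q=n}E_{p,q}$. Reading off the formulas for $\delta_{2t}$ and $\delta_{2t+1}$, each differential is a sum of two kinds of terms: the $K_1,K_3,K_5,K_7$ terms keep the index $i$ (hence keep $p$ and drop $q$ by one) and multiply by a power of $y$, while the $K_2,K_4,K_6,K_8$ terms lower $i$ by one (hence drop $p$ by one and keep $q$) and multiply by a power of $x$. Since $\delta^2=0$ and the two kinds of terms move a basis element in the two coordinate directions of the $(p,q)$-grid, the first defines a vertical differential $d^v\colon E_{p,q}\to E_{p,q-1}$ and the second a horizontal one $d^h\colon E_{p,q}\to E_{p-1,q}$, making $E_{\bullet\bullet}$ a double complex whose total complex is $C_\bullet$. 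Two features will be essential: in the horizontal direction the power of $x$ one multiplies by depends only on the parity of $p$ — it is $x$ when $p$ is odd and $x^{a-1}$ when $p$ is even — and all the scalars $K_j$ that occur are nonzero, because $q$ is not a root of unity and $a,b\ge 2$.

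Next I would take homology in the horizontal ($x$) direction. Splitting a row $E_{\bullet,q}$ according to the $y$-exponent of the monomial basis of $A=E_{p,q}$ exhibits it as a direct sum of $b$ copies of the $2$-periodic complex
\[
\cdots\to k[X]/(X^a)\xrightarrow{\ \cdot X^{a-1}\ }k[X]/(X^a)\xrightarrow{\ \cdot X\ }k[X]/(X^a)\to 0
\]
with nonzero scalars inserted; such a complex is exact in every positive degree and has cokernel $k$ at $p=0$. Hence all rows are exact off the column $p=0$, so $C_\bullet$ is quasi-isomorphic to the complex $\bar E_{0,\bullet}$ of row-homologies there, where $\bar E_{0,q}$ is $b$-dimensional, spanned by the images of the elements $y^u$ with $0\le u<b$, and the differential is induced by $d^v$. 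On the column $p=0$ the vertical differential sends $y^u$ to $y^{u+b-1}$ when $q$ is even and to $y^{u+1}$ when $q$ is odd, up to a nonzero scalar — the relevant coefficients are $K_1(t,0,u,0)$ and $K_5(t,0,u,0)=q^{1-a}-1$, both nonzero — so $\bar E_{0,\bullet}$ is again a $2$-periodic complex of the same shape, now for $k[Y]/(Y^b)$, hence exact in every positive degree. Therefore $H_n(C_\bullet)=0$ for $n\ge 1$, which is the assertion.

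The bulk of the work, and the only place where care is genuinely required, is the bookkeeping in the last two steps: checking that after the substitution $e^n_i\mapsto E_{i,\,n-i}$ the $K$-terms of $\delta_n$ really do split into an $x$-part whose exponent depends only on the parity of $p$ and a $y$-part whose exponent depends only on the parity of $q$, and confirming that the coefficients that survive to the column $p=0$ are among those already known to be nonzero. Everything else is the standard spectral-sequence (``exact rows'') argument for a double complex.
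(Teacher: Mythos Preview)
Your argument is correct and takes a genuinely different route from the paper's. The paper proceeds by a direct dimension count: for each $n$ it lists the monomials $y^ux^ve^n_i$ and the ``nontrivial linear combinations'' lying in $\Ker\delta_n$, obtains $\dim\Ker\delta_n$ explicitly (namely $ab\tfrac{n+2}{2}-1$ for $n$ even and $ab\tfrac{n+1}{2}+1$ for $n$ odd), and then uses rank--nullity to see that $\dim\Im\delta_{n+1}=\dim\Ker\delta_n$ for all $n\ge 1$. Your approach instead recognises $C_\bullet$ as the total complex of a first-quadrant double complex $E_{p,q}$ whose horizontal differential is, up to nonzero scalars, the $2$-periodic complex for $k[X]/(X^a)$ and whose vertical differential is the $2$-periodic complex for $k[Y]/(Y^b)$; one pass of the spectral sequence collapses the picture to a single column, and a second pass kills that column in positive degree. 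The bookkeeping you flag --- that the power of $x$ in $d^h$ depends only on the parity of $p=i$, the power of $y$ in $d^v$ only on the parity of $q=n-i$, and that the coefficients $K_1(t,0,u,0)$ and $K_5(t,0,u,0)=q^{1-a}-1$ surviving to the column $p=0$ are nonzero --- all checks out directly from the defining formulas for the $K_j$. Your route is more conceptual and explains \emph{why} the vanishing holds (the complex is essentially a twisted tensor product of two acyclic periodic complexes), while the paper's brute-force count is more self-contained but offers less structural insight. One small point worth making explicit in a final write-up: the scalars $K_j$ depend on $v$ (and $u$) as well as on the position, so the rows are not literally the periodic complex for $k[X]/(X^a)$; but since each $K_j$ is nonzero the kernels and images coincide with those of the untwisted periodic complex, which is all you use.
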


\begin{proof}
We first compute the kernel of $\delta_{2t}$ for $t \ge 1$. If $i$ is even, then
$$\delta_{2t}(y^ux^ve^{2t}_i) =0 \Leftrightarrow \left \{
\begin{array}{l}
u \ge 1, v \ge 1, i \in \{ 0,2, \dots, 2t \}, \text{ or} \\
u \ge 1, v=0, i=0, \text{ or} \\
u=0, v \ge 1, i=2t.
\end{array}
\right.$$
There are $(b-1)(a-1)(t+1)+(b-1)+(a-1)$ such vectors. If $i$ is odd, then
$$\delta_{2t}(y^ux^ve^{2t}_i) =0 \Leftrightarrow u=b-1, v=a-1, i \in \{ 1,3, \dots, 2t-1 \},$$
and there are $t$ such vectors. Finally, the nontrivial linear combinations in $\Ker \delta_{2t}$ are
\begin{align*}
& x^ve^{2t}_i + C_1(t,i,u,v)y^{b-1}x^{v-1}e^{2t}_{i+1} & v \ge 1, i \in \{ 0, 2, \dots, 2t-2 \} \\
& y^ue^{2t}_i + C_2(t,i,u,v)y^{u-1}x^{a-1}e^{2t}_{i-1} & u \ge 1, i \in \{ 2,4, \dots, 2t \},
\end{align*}
where $C_1(t,i,u,v)$ and $C_2(t,i,u,v)$ are suitable nonzero scalars in $k$. There are $(a+b-2)t$ such linear combinations in total. Summing up, we see that the dimension of $\Ker \delta_{2t}$ is $abt+ab-1$.

Next, we compute the kernel of $\delta_{2t+1}$ for $t \ge 0$. If $i$ is even, then
$$\delta_{2t+1}(y^ux^ve^{2t+1}_i) =0 \Leftrightarrow \left \{
\begin{array}{l}
u=b-1, v \ge 1, i \in \{ 0,2, \dots, 2t \}, \text{ or} \\
u=b-1, v=0, i=0.
\end{array}
\right.$$
There are $(a-1)(t+1)+1$ such vectors. If $i$ is odd, then
$$\delta_{2t+1}(y^ux^ve^{2t+1}_i) =0 \Leftrightarrow \left \{
\begin{array}{l}
u \ge 1, v=a-1, i \in \{ 1,3, \dots, 2t+1 \}, \text{ or} \\
u=0, v=a-1, i=2t+1,
\end{array}
\right.$$
and there are $(b-1)(t+1)+1$ such vectors. Finally, the nontrivial linear combinations in $\Ker \delta_{2t+1}$ are
\begin{align*}
& y^ux^ve^{2t+1}_i + C_3(t,i,u,v)y^{u+1}x^{v-1}e^{2t+1}_{i+1} & u \le b-2, v \ge 1, i \in \{ 0, 2, \dots, 2t \} \\
& y^{b-1}e^{2t+1}_i + C_4(t,i,u,v)x^{a-1}e^{2t+1}_{i-1} & i \in \{ 2,4, \dots, 2t \},
\end{align*}
for suitable nonzero scalars $C_3(t,i,u,v)$ and $C_4(t,i,u,v)$ in $k$. There are $(b-1)(a-1)(t+1)+t$ such linear combinations. Consequently, the total dimension of $\Ker \delta_{2t+1}$ is $abt+ab+1$.

We have shown that when $n \ge 1$, then
$$\dim \Ker \delta_n = \left \{
\begin{array}{ll}
ab \frac{n+2}{2} -1 & \text{for $n$ even} \\
ab \frac{n+1}{2} +1 & \text{for $n$ odd}.
\end{array}
\right.$$
The exact sequence
$$0 \to \Ker \delta_n \to \oplus_{i=0}^n Ae^n_i \xrightarrow{\delta_n} \Im \delta_n \to 0$$
gives $\dim \Im \delta_n = (n+1)ab- \dim \Ker \delta_n$, and so
$$\dim \Im \delta_{n+1} = \left \{
\begin{array}{ll}
ab \frac{n+2}{2} -1 & \text{for $n$ even} \\
ab \frac{n+1}{2} +1 & \text{for $n$ odd}.
\end{array}
\right.$$
This shows that $\HH_n(A, {_{\nu^{-1}}A_1})=0$ for $n \ge 1$.
\end{proof}

Using Proposition \ref{negativecohomologycodim2}, we can now compute all the Tate-Hochschild cohomology groups of $A$. Note that the characteristic of the ground field does not matter, contrary to the homology case in Theorem \ref{homologycodim2}.

\begin{theorem}\label{cohomologycodim2}
Let $k$ be a field and
$$A = k \langle X,Y \rangle / (X^a, XY-qYX, Y^b)$$
a quantum complete intersection with $a,b\ge 2$ and $q$ not a root of unity in $k$. Then
$$\dim \TateHH^n(A,A) = \left \{
\begin{array}{ll}
1 & \text{if } n=0 \\
2 & \text{if } n=1 \\
1 & \text{if } n=2 \\
0 & \text{if } n \neq 0,1,2. \\
\end{array}
\right.$$
\end{theorem}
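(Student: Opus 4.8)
The plan is to compute the groups separately in the three ranges $n\ge 1$, $n=0$, and $n\le -1$, reducing the first and last to facts already available and computing the two borderline degrees directly from the partial complete bimodule resolution of Lemma \ref{zeromaps}. Since $A$ is Frobenius it is selfinjective, so by Lemma \ref{tensorgorenstein} the enveloping algebra $\Ae$ is selfinjective, i.e.\ of Gorenstein dimension $0$; hence $\TateHH^n(A,A)\cong\HH^n(A,A)$ and $\TateHH_n(A,B)\cong\HH_n(A,B)$ for all $n\ge 1$ and every bimodule $B$. For $n\ge 1$ the theorem therefore follows from the known ordinary Hochschild cohomology of $A$: by the Bergh--Erdmann computation \cite[Theorem 3.2]{BerghErdmann}, $\dim\HH^1(A,A)=2$, $\dim\HH^2(A,A)=1$, and $\HH^n(A,A)=0$ for all $n\ge 3$.

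For degree $0$ the identification with $\HH^0$ is unavailable, so I would instead use Lemma \ref{dualityexttor} to write $\dim\TateHH^0(A,A)=\dim\TateHH_0(A,D(A))$, then the Frobenius identification $D(A)\cong{_{\nu}A_1}$ to reduce to $\dim\TateHH_0(A,{_{\nu}A_1})$, and finally Lemma \ref{zeromaps} with $\psi=\nu$. By Lemma \ref{nakayamaQCI} the Nakayama automorphism is $\nu(x)=q^{1-b}x$, $\nu(y)=q^{a-1}y$, so the group in question is the homology in the middle of $A^2\xrightarrow{d_1^{\nu}}A\xrightarrow{d_0^{\nu}}A$. A monomial count — using that $q$ is not a root of unity and $a,b\ge 2$, so that the coefficients $q^{u+1-b}-1$ and $q^{a-1}-q^v$ appearing in $d_1^{\nu}$ and the geometric-sum scalar $\underline{\alpha}$ appearing in $d_0^{\nu}$ are all nonzero (this is also why the characteristic of $k$ will be irrelevant) — shows that $\Ker d_0^{\nu}$ is spanned by all basis monomials except $1$, whereas $\Im d_1^{\nu}$ is spanned by all basis monomials except $1$ and $y^{b-1}x^{a-1}$. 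Hence $\dim\TateHH^0(A,A)=(ab-1)-(ab-2)=1$.

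For $n\le -1$, write $n=-m$ with $m\ge 1$. By the $B=A$ case of Theorem \ref{dualityTHfrobenius}, $\TateHH^{-m}(A,A)\cong\TateHH^{m-1}(A,{_{\nu^2}A_1})$; and combining Lemma \ref{dualityexttor} with the twisted-bimodule identity $D({_{\nu^2}A_1})\cong{_{\nu^{-1}}A_1}$ (which follows from $D(A)\cong{_{\nu}A_1}$, the rule $D({_fM_g})\cong{_gD(M)_f}$, and the standard isomorphisms among the bimodules ${_fA_g}$) gives $\dim\TateHH^{-m}(A,A)=\dim\TateHH_{m-1}(A,{_{\nu^{-1}}A_1})$. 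For $m\ge 2$ this equals $\dim\HH_{m-1}(A,{_{\nu^{-1}}A_1})$, which vanishes by Proposition \ref{negativecohomologycodim2}. For $m=1$ it equals $\dim\TateHH_0(A,{_{\nu^{-1}}A_1})$, which I would compute exactly as in the degree-$0$ case, now via Lemma \ref{zeromaps} with $\psi=\nu^{-1}$ (so $\nu^{-1}(x)=q^{b-1}x$, $\nu^{-1}(y)=q^{1-a}y$): here the coefficients $q^{u+b-1}-1$ and $q^{1-a}-q^v$ never vanish, so $\Im d_1^{\nu^{-1}}$ already spans every basis monomial except $1$, which is precisely $\Ker d_0^{\nu^{-1}}$, so the homology is $0$. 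Thus $\TateHH^n(A,A)=0$ for all $n\le -1$, and together with the previous ranges this proves the theorem.

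The one genuine difficulty I anticipate is bookkeeping: keeping the left/right twists on the various bimodules straight through the applications of Lemma \ref{dualityexttor}, Theorem \ref{dualityTHfrobenius}, and the identity $D({_fM_g})\cong{_gD(M)_f}$, and running the two kernel/image computations from Lemma \ref{zeromaps} carefully enough to be certain that no coefficient secretly vanishes — which is precisely where the hypothesis that $q$ is not a root of unity enters, and why (in contrast with the homology in Theorem \ref{homologycodim2}) the characteristic of $k$ plays no role. Everything else is a direct appeal to the duality theorems of Section \ref{tatehochschild}, the selfinjectivity of $\Ae$, the explicit Nakayama automorphism of Lemma \ref{nakayamaQCI}, Proposition \ref{negativecohomologycodim2}, and the Bergh--Erdmann computation of $\HH^n(A,A)$ for $n\le 2$, all used as black boxes.
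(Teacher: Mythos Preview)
Your proposal is correct and follows essentially the same route as the paper's proof: the cases $n\ge 1$ via \cite[Theorem 3.2]{BerghErdmann}, the cases $n\le -2$ via the duality chain $\TateHH^n(A,A)\to\TateHH_{-(n+1)}(A,{_{\nu^{-1}}A_1})$ together with Proposition \ref{negativecohomologycodim2}, and the two borderline degrees $n=0,-1$ by direct kernel/image counts using Lemma \ref{zeromaps} with $\psi=\nu$ and $\psi=\nu^{-1}$. Your monomial analyses for $\Im d_1^{\nu}$, $\Im d_1^{\nu^{-1}}$, and the nonvanishing of $\underline{\alpha}$ match the paper's computations exactly.
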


\begin{proof}
For $n \ge 1$, the dimensions follow from \cite[Theorem 3.2]{BerghErdmann}. Moreover, by Theorem \ref{dualityTHfrobenius} and Lemma \ref{dualityexttor} there are equalities
\begin{eqnarray*}
\dim \TateHH^n(A,A) &=& \dim \TateHH^{-(n+1)}(A, {_{\nu^2}A_1}) \\
&=& \dim \TateHH_{-(n+1)}(A, D({_{\nu^2}A_1})) \\
&=& \dim \TateHH_{-(n+1)}(A, {_{\nu^{-1}}A_1}) \\
\end{eqnarray*}
for all $n \in \mathbb{Z}$. It follows from Proposition \ref{negativecohomologycodim2} that $\TateHH_n(A, {_{\nu^{-1}}A_1}) =0$ for $n \ge 1$, hence $\TateHH^n(A,A)=0$ for $n \le -2$. What remains is therefore to compute $\TateHH^0(A,A)$ and $\TateHH^{-1}(A,A)$.

Since $\dim \TateHH^0(A,A) = \dim \TateHH_0(A, {_{\nu}A_1})$ by Lemma \ref{dualityexttor}, we use Lemma \ref{zeromaps}. Namely, the space $\TateHH_0(A, {_{\nu}A_1})$ is the homology of the complex
$${_{\nu}A^2_1} \xrightarrow{d^{\nu}_1} {_{\nu}A_1} \xrightarrow{d^{\nu}_0} {_{\nu}A_1},$$
with maps given by
\begin{eqnarray*}
d^{\nu}_1(y^ux^ve^2_1) &=& (q^{u+1-b}-1)y^ux^{v+1} \\
d^{\nu}_1(y^ux^ve^2_2) &=& (q^{a-1}-q^{v})y^{u+1}x^v \\
d^{\nu}_0(y^ux^v) &=& \left \{
\begin{array}{ll}
0 & \text{if $u \ge 1$ or $v \ge 1$} \\
\frac{q^{a-ba}-1}{q^{1-b}-1} \frac{q^{ba-b}-1}{q^{a-1}-1} y^{b-1}x^{a-1} & \text{if $u=v=0$}.
\end{array}
\right.
\end{eqnarray*}
We see that
$$y^ux^v \in \Im d^{\nu}_1 \Leftrightarrow (u,v) \notin \{ (0,0),(b-1,a-1) \},$$
hence $\dim \Im d^{\nu}_1 = ab-2$. Since $\dim \Ker d^{\nu}_0 = ab-1$, it follows that the dimension of $\TateHH^0(A,A)$ is $1$.

\sloppy Finally, we compute $\TateHH^{-1}(A,A)$. From the beginning of the proof we know that $\dim \TateHH^{-1}(A,A) = \dim \TateHH_0(A, {_{\nu^{-1}}A_1})$, so once again we use Lemma \ref{zeromaps}. The space $\TateHH_0(A, {_{\nu^{-1}}A_1})$ is the homology of the complex
$${_{\nu^{-1}}A^2_1} \xrightarrow{d^{\nu^{-1}}_1} {_{\nu^{-1}}A_1} \xrightarrow{d^{\nu^{-1}}_0} {_{\nu^{-1}}A_1},$$
with maps given by
\begin{eqnarray*}
d^{\nu^{-1}}_1(y^ux^ve^2_1) &=& (q^{u+b-1}-1)y^ux^{v+1} \\
d^{\nu^{-1}}_1(y^ux^ve^2_2) &=& (q^{1-a}-q^{v})y^{u+1}x^v \\
d^{\nu^{-1}}_0(y^ux^v) &=& \left \{
\begin{array}{ll}
0 & \text{if $u \ge 1$ or $v \ge 1$} \\
\frac{q^{b-ba}-1}{q^{1-a}-1} \frac{q^{ba-b}-1}{q^{b-1}-1} y^{b-1}x^{a-1} & \text{if $u=v=0$}.
\end{array}
\right.
\end{eqnarray*}
Here we see that
$$y^ux^v \in \Im d^{\nu^{-1}}_1 \Leftrightarrow (u,v) \neq (0,0),$$
hence $\dim \Im d^{\nu^{-1}}_1 = ab-1$. Since $\dim \Ker d^{\nu^{-1}}_0 = ab-1$, it follows that $\TateHH^{-1}(A,A)=0$.
\end{proof}

\end{document}